\newcommand{\CC}{{\mathbf C}}
\newcommand{\RR}{{\mathbf R}}
\newcommand{\QQ}{{\mathbf{Q}}}
\newcommand{\ZZ}{{\mathbf{Z}}}
 \DeclareMathOperator{\Prob}{\mathbf{P}}   
 \DeclareMathOperator{\E}{\mathbf{E}}      
 \DeclareMathOperator{\vol}{Vol}           
 \DeclareMathOperator{\Arg}{Arg}           
\def\sums{\mathop{\sum \Bigl.^{*}}\limits}
\newcommand{\convlaw}{\overset{\mbox{\rm \scriptsize law}}{\Rightarrow}}
\def\stacksum#1#2{{\stackrel{{\scriptstyle #1}}
{{\scriptstyle #2}}}}
\newcommand{\demi}{{\textstyle{\frac{1}{2}}}}
\newcommand{\hyperg}[4]{\phantom{}_2F_{1}({{#1}},{{#2}};{{#3}};{{#4}})}
\newcommand{\eps}{\varepsilon}
\newcommand{\Cc}{\CC}
\newcommand{\Zz}{\ZZ}
\newcommand{\Rr}{\RR}
\newcommand{\Qq}{\QQ}
\newcommand{\Fp}{\mathbf{F}}
\newcommand{\proba}{\Prob}
\newcommand{\expect}{\E}
\newcommand{\mods}[1]{\,(\mathrm{mod}\,{#1})}
\newcommand{\ra}{\rightarrow}
\newcommand{\lra}{\longrightarrow}
\DeclareMathOperator{\Imag}{Im}
\DeclareMathOperator{\Reel}{Re}
\DeclareMathOperator{\Tr}{Tr}
\renewcommand{\leq}{\leqslant}
\renewcommand{\geq}{\geqslant}
\newtheorem{theorem}{Theorem}
\newtheorem{corollary}[theorem]{Corollary}
\newtheorem{lemma}[theorem]{Lemma}
\newtheorem{proposition}[theorem]{Proposition}
\newtheorem{conjecture}[theorem]{Conjecture}
\theoremstyle{definition}
\newtheorem{example}{Example}
\theoremstyle{remark}
\newtheorem{remark}{Remark}
\newtheorem*{rem}{Remark}
\begin{document}
\title[Mod-Gaussian convergence and $\zeta(1/2+it)$]
{Mod-Gaussian convergence and the value distribution of
  $\zeta(1/2+it)$ and related quantities}

 \author{E. Kowalski} 
\address{ETH Z\"urich -- D-MATH \\ R\"{a}mistrasse
   101\\ 8092 Z\"{u}rich, Switzerland}
 \email{kowalski@math.ethz.ch}

\author{A. Nikeghbali}
 \address{Institut f\"ur Mathematik,
 Universit\"at Z\"urich, Winterthurerstrasse 190,
 8057 Z\"urich,
 Switzerland}
 \email{ashkan.nikeghbali@math.uzh.ch}

\begin{abstract}
  In the context of mod-Gaussian convergence, as defined previously in
  our work with J. Jacod, we obtain asymptotic formulas and lower
  bounds for local probabilities for a sequence of random vectors
  which are approximately Gaussian in this sense, with increasing
  covariance matrix. This is motivated by the conjecture concerning
  the density of the set of values of the Riemann zeta function on the
  critical line. We obtain evidence for this fact, and derive
  unconditional results for random matrices in compact classical
  groups, as well as for certain families of $L$-functions over finite
  fields.
\end{abstract}

\subjclass[2000]{11M06, 11T23, 60E10, 60Fxx}

\keywords{Zeta function, $L$-functions, mod-Gaussian convergence,
  equidistribution, random matrix theory, characteristic polynomials,
  Euler products, monodromy}

\maketitle

\section{Introduction}

It is well-known (see, e.g.,~\cite[Th. 11.9]{titchmarsh}) that, for
$1/2<\sigma<1$, the set of values $\zeta(\sigma+it)$, $t\in\Rr$, is
dense in the complex plane. In fact, much more is true: it was proved
by Bohr and Jessen that there exists a Borel probability measure
$\mu_{\sigma}$ on $\Cc$, such that the support of $\mu_{\sigma}$ is
the whole complex plane, and such that the convergence in law
$$
\frac{1}{2T}\int_{-T}^T{f( \log \zeta(\sigma+it))dt}
\ra \int_{\Cc}{f(z)d\mu_{\sigma}(z)},
$$
holds for $f\,:\, \Cc\ra \Cc$ continuous and bounded.
\par
The corresponding density question for $\sigma=1/2$ is, however, still
open (it was apparently first raised by Ramachandra during the 1979
Durham conference, but seems to appear in print only in Heath-Brown's
note in~\cite[11.13]{titchmarsh}): the difficulty is that the values
$\zeta(1/2+it)$, $|t|\leq T$, do not have a limiting distribution, as
evidenced already by the Hardy-Littlewood asymptotic
$$
\frac{1}{T}\int_0^T{|\zeta(1/2+it)|^2dt}\sim (\log
T),\quad\quad\text{ as } T\ra +\infty,
$$
or by Selberg's result that $\log |\zeta(1/2+it)|$, $|t|\leq T$, is
asymptotically normal with variance \emph{growing to infinity} (see
also the work of Ghosh~\cite{ghosh} for the imaginary part,
and~\cite[\S 5]{bombieri-hejhal} for a recent proof). In other words,
``most'' values of $\zeta(1/2+it)$ are rather large, though the zeta
function is zero increasingly often as the imaginary part grows.
\par
In this paper, we show (Corollary~\ref{cor-zeta}) how the density of
values of zeta on the critical line would follow rather directly from
a suitable version of the Keating-Snaith moment conjectures, which we
viewed in our previous work with J. Jacod~\cite{jkn} as a refined
version of the Gaussian model.  In fact, under suitable assumptions,
we could prove a quantitative result, bounding from above the smallest
$t\geq 0$ for which $\zeta(1/2+it)$ lies in a given open disc in
$\Cc$.  This argument is based on a very general probabilistic
estimate proved in Section~\ref{sec-proba}, which throws some light on
the nature of the mod-Gaussian convergence that we defined
in~\cite{jkn}. We hope that this result will be of further use. In
another paper (jointly with F. Delbaen, see~\cite{dkn}), it will be
seen that one can weaken considerably the assumption needed in order
to prove the density of values of $\zeta(1/2+it)$ (but without
quantitative information).
\par
As applications of the general result, we will also prove the
following theorems in Section~\ref{sec-examples} (the precise versions
are given there).

\begin{theorem}\label{th-rmt}
  Let $z_0\in \Cc^{\times}$ be arbitrary, $\eps>0$ such that $\eps\leq
  |z_0|$. There exists $N_0(z_0,\eps)$, which can be bounded
  explicitly, such that
\begin{equation}\label{eq-rmt-un}
  \mu_N(\{g\in U(N)\,\mid\, |\det(1-g)-z_0|<\eps\})
\gg \Bigl(\frac{\eps}{|z_0|}\Bigr)^2\frac{1}{\log N}
\end{equation}
provided $N\geq N_0$, where $\mu_N$ denotes probability Haar measure
on the unitary group $U(N)\subset GL(N,\Cc)$, and the implied
constant is absolute.
\end{theorem}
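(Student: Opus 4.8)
The plan is to apply the general probabilistic estimate of Section~\ref{sec-proba} to the sequence of random variables $\log\det(1-g)$, where $g$ ranges over $U(N)$ with Haar measure. The first step is to recall the Keating–Snaith computation of the moments: for $g\in U(N)$ Haar-distributed, the characteristic function (Mellin–Fourier transform) of $\log\det(1-g)$ is known in closed form via Weyl's integration formula and Selberg's integral, and one extracts from it that $\log\det(1-g)$, suitably interpreted, converges in the mod-Gaussian sense with parameter $\tfrac12\log N$ — i.e. the renormalized characteristic functions $\E(\exp(iu\,\Reel\log\det(1-g)+iv\,\Imag\log\det(1-g)))\exp(\tfrac14(u^2+v^2)\log N)$ converge to a limiting function $\Phi(u,v)$ which is entire and nonvanishing at the origin. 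This is exactly the setup treated in \cite{jkn}, so the verification here is a matter of quoting that mod-Gaussian convergence with the explicit covariance $\Sigma_N=\tfrac12(\log N)I_2$.

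Next, I would feed this into the general local-limit lower bound proved in Section~\ref{sec-proba}: under mod-Gaussian convergence with covariance matrix $\Sigma_N\to\infty$, the probability that the random vector lands in a small ball of radius $\delta$ around a fixed point $w_0$ is bounded below by a constant multiple of $\delta^2/\det(\Sigma_N)^{1/2}$ — here $\det(\Sigma_N)^{1/2}\asymp\log N$ — times the value of the (continuous, positive) limiting density near $w_0$, once $N$ is large enough. The remaining work is to translate the event $\{|\det(1-g)-z_0|<\eps\}$ in the $z$-plane into an event about the vector $(\Reel\log\det(1-g),\Imag\log\det(1-g))$ in the logarithmic plane. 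Writing $w_0=\log z_0$ (any branch), the condition $|\det(1-g)-z_0|<\eps$ contains a ball of radius $\asymp\eps/|z_0|$ around $w_0$ in the logarithmic coordinate, since the derivative of $z\mapsto\log z$ at $z_0$ is $1/z_0$; this is where the factor $(\eps/|z_0|)^2$ comes from. One must be slightly careful because $\log\det(1-g)$ is multivalued, but it suffices to work with the principal branch on a neighborhood of $z_0$ bounded away from $0$, and the event that $\det(1-g)$ lies in a small disc around $z_0$ forces the principal branch of the logarithm to be well-defined there.

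The main obstacle I anticipate is making the lower bound \emph{effective}, i.e. producing an explicit $N_0(z_0,\eps)$: the general Section~\ref{sec-proba} estimate presumably gives a threshold depending on the rate of convergence in the mod-Gaussian limit and on how close $w_0$ is to the "bulk", so one needs a quantitative form of the convergence of the Keating–Snaith characteristic functions — uniform on compact sets in $(u,v)$ with an explicit error term in $N$ — together with explicit lower bounds on the limiting density $\Phi$. The Keating–Snaith/Selberg-integral formula is exact, so in principle the error analysis is a finite computation with Barnes $G$-functions and $\log$-derivatives; I would expect the dependence on $|z_0|$ and $\eps$ to enter through the requirement that the Gaussian scale $\tfrac12\log N$ dominates $|w_0|^2\asymp(\log|z_0|)^2$ and that the ball radius $\eps/|z_0|$ not be too small compared to $(\log N)^{-1/2}$, but since $\eps$ and $z_0$ are fixed this only affects the constant and the threshold, not the shape $(\eps/|z_0|)^2/\log N$ of the bound. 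Finally I would record that the implied constant is absolute because the limiting density $\Phi$ is a fixed universal function coming from the $U(N)$ family.
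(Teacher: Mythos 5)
Your proposal follows essentially the same route as the paper: Keating--Snaith's exact formula gives two-dimensional mod-Gaussian convergence of $\log\det(1-g)$ with covariance $\tfrac12(\log N)I_2$, the general local lower bound of Section~\ref{sec-proba} (applied after verifying the uniform error term via Barnes-function asymptotics, as in the Appendix) yields $\gg\eps^2/\log N$ for a ball of radius $\eps$ around $\log z_0$, and the exponential change of variables (the paper's Lemma~\ref{lm-log}) converts this into the stated bound with the Jacobian factor $(\eps/|z_0|)^2$. The steps you flag as remaining work (uniform quantitative convergence, explicit $N_0$) are exactly what the paper supplies in Proposition~\ref{pr-app} and Theorem~\ref{th-rmt-quant}.
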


\begin{theorem}\label{th-eulerproduct}
Define 
\begin{equation}\label{eq-pnt}
  P_N(t)=\prod_{p\leq N}{(1-p^{-1/2-it})^{-1}}
\end{equation}
for $N\geq 1$ and $t\in\Rr$.  Let $z_0\in \Cc^{\times}$ be arbitrary,
$\eps>0$ such that $\eps\leq |z_0|$. There exists $N_0(z_0,\eps)$,
explicitly bounded, such that
$$
\liminf_{T\ra +\infty}{\frac{1}{T}\lambda(\{t\leq T\,\mid\, P_N(t)\in
  V\})}\gg \Bigl(\frac{\eps}{|z_0|}\Bigr)^2
\frac{1}{\log\log N},
$$
for all $N\geq N_0$, where $\lambda$ is the Lebesgue measure and the
implied constant is absolute.
\end{theorem}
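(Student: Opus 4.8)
The plan is to reduce the statement, by a change of branch of the logarithm and the Weyl equidistribution theorem, to the general local-probability lower bound of Section~\ref{sec-proba}, applied to the sequence of random vectors arising as the $T\to+\infty$ limiting laws of $\log P_N(\cdot)$; the analytic input is that this sequence is mod-Gaussian convergent in $\Rr^2$ with covariance matrices growing like $\tfrac12(\log\log N)\,I_2$. Concretely, since $|p^{-1/2-\ii t}|=p^{-1/2}<1$, the finite Euler product has a canonical continuous logarithm
$$
\Lb_N(t):=-\sum_{p\leq N}\log(1-p^{-1/2-\ii t})=\sum_{p\leq N}\sum_{k\geq 1}\frac1k\,p^{-k/2}e^{-\ii kt\log p},
$$
with $P_N(t)=\exp(\Lb_N(t))$; write $Z_N(t)=(\Reel\Lb_N(t),\Imag\Lb_N(t))\in\Rr^2$. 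Fix one logarithm $\ell_0$ of $z_0$ and set $\delta=\log(1+\eps/|z_0|)$, so that $\exp$ maps the open disc $D(\ell_0,\delta)$ into $V=D(z_0,\eps)$; since $0<\eps/|z_0|\leq 1$ one has $\delta\geq \eps/(2|z_0|)$, so $D(\ell_0,\delta)$ has area $\pi\delta^2\geq \pi\eps^2/(4|z_0|^2)$. This is exactly where the factor $|z_0|^{-2}$ in the conclusion will come from, and it reduces the theorem to a lower bound for $\tfrac1T\lambda(\{t\leq T\,:\,Z_N(t)\in D(\ell_0,\delta)\})$.

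Next I would carry out the limit $T\to+\infty$. The numbers $\log p$, $p\leq N$, are linearly independent over $\QQ$ by unique factorization, so by Weyl's criterion $(t\log p/2\pi)_{p\leq N}$ equidistributes modulo $1$ as $t$ ranges over $[0,T]$, $T\to+\infty$. As $Z_N(t)$ is the image of this orbit under a fixed continuous function on the torus $(\Rr/2\pi\ZZ)^{\pi(N)}$, it follows that $Z_N(t)$ converges in law, as $T\to+\infty$, to $Z_N=(\Reel\Lb_N^{\infty},\Imag\Lb_N^{\infty})$, where $\Lb_N^{\infty}=-\sum_{p\leq N}\log(1-p^{-1/2}e^{\ii\theta_p})$ and the $\theta_p$ are independent, uniform on $[0,2\pi)$. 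Since $t\mapsto Z_N(t)$ is continuous, $\{t\,:\,Z_N(t)\in D(\ell_0,\delta)\}$ is open, so the portmanteau theorem gives
$$
\liminf_{T\to+\infty}\frac1T\lambda(\{t\leq T\,:\,Z_N(t)\in D(\ell_0,\delta)\})\ \geq\ \Prob(Z_N\in D(\ell_0,\delta)).
$$

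For the probabilistic input, I would check that $(Z_N)_N$ is mod-Gaussian convergent. By independence of the $\theta_p$, the characteristic function $\E[\exp(\ii\langle\xi,Z_N\rangle)]$ factors over $p\leq N$ into the contributions of $W_p=-\log(1-p^{-1/2}e^{\ii\theta_p})$; each $W_p$ is centered, satisfies $\|W_p\|_\infty=O(p^{-1/2})$, and has covariance matrix $\tfrac12\bigl(\sum_{k\geq 1}p^{-k}/k^2\bigr)I_2$ (the real and imaginary parts are uncorrelated with equal variances). Using the cumulant bounds for bounded random variables one gets, for $|\xi|\leq R$ and $p$ beyond an explicit $p_0(R)$,
$$
\log\E[\exp(\ii\langle\xi,W_p\rangle)]=-\tfrac14 p^{-1}|\xi|^2\Bigl(1+O(p^{-1})\Bigr)+O\Bigl(|\xi|^3p^{-3/2}\Bigr),
$$
and summing over $p\leq N$, together with Mertens' theorem $\sum_{p\leq N}p^{-1}=\log\log N+O(1)$, shows that the covariance matrices are $\Sigma_N=v_N I_2$ with $v_N=\tfrac12\log\log N+O(1)\to+\infty$ and that $\exp(\tfrac12\xi^T\Sigma_N\xi)\,\E[\exp(\ii\langle\xi,Z_N\rangle)]$ converges, locally uniformly in $\xi$, to a continuous limiting function $\Phi$ with $\Phi(0)=1$ (the error terms being summable in $p$).

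Finally, the general lower bound of Section~\ref{sec-proba}, applied to $(Z_N)$ and the fixed disc $D(\ell_0,\delta)$, gives $\Prob(Z_N\in D(\ell_0,\delta))\gg \pi\delta^2/\sqrt{\det\Sigma_N}=\pi\delta^2/v_N\gg (\eps/|z_0|)^2/\log\log N$, with an absolute implied constant, as soon as $\log\log N$ exceeds an explicit threshold depending only on $\Phi$ and $\delta$; this yields $N_0(z_0,\eps)$, and combining with the two preceding paragraphs proves the theorem. The hard part will be this probabilistic input: verifying mod-Gaussian convergence with enough uniformity to feed the general result — controlling the higher ($k\geq 2$) terms of the Euler product, bounding the cumulant tails uniformly on compact sets of $\xi$, and confirming that the limiting function $\Phi$, a product over all primes of Bessel-type factors which need not decay (and may vanish away from the origin), still satisfies the hypotheses of the estimate of Section~\ref{sec-proba}. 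Making $N_0(z_0,\eps)$ explicit then amounts to tracking the constants through these bounds and through Mertens' theorem.
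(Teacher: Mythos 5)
Your proposal follows the paper's own route essentially exactly: reduce the $T\to\infty$ average to a probability for the i.i.d.\ random Euler product $\tilde P_N=\exp(L_N)$ via unique factorization (Weyl criterion) and the portmanteau theorem, convert the target disc $D(z_0,\eps)$ to a disc for the logarithm to absorb the $(\eps/|z_0|)^2$ factor, verify two-dimensional mod-Gaussian convergence of $L_N$ with covariance $\sim\tfrac12(\log\log N)\,I_2$, and feed this into the local lower bound of Theorem~\ref{th-2}. The only cosmetic differences are that the paper verifies the uniformity hypothesis~(\ref{eq-strong-convergence}) by an explicit per-prime hypergeometric expansion giving $R_N(t)=1+O(N^{-1/2})$ for $\|t\|\leq N^{1/8}$ (rather than your abstract cumulant bounds, whose uniformity over growing $\|t\|$-regions you correctly flag as the real work), and uses Lemma~\ref{lm-log} in place of your $\delta=\log(1+\eps/|z_0|)$ computation.
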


In a different direction, we obtain some evidence for the density of
$\zeta(1/2+it)$ by looking at special values of families of
$L$-functions over finite fields. In doing so, we also consider the
analogue of Theorem~\ref{th-rmt} for symplectic and orthogonal
matrices. We refer to Section~\ref{sec-ff} for precise statements and
definitions, and only state here one appealing (qualitative)
corollary:

\begin{theorem}\label{th-ff}
  The set of central values of the $L$-functions attached to
  non-trivial primitive Dirichlet characters of $\Fp_p[X]$, where $p$
  ranges over primes, is dense in $\Cc$.
\end{theorem}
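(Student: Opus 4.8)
The plan is to derive this from the finite-field analogue of Theorem~\ref{th-rmt} that is established in Section~\ref{sec-ff}, together with a soft topological observation. First I would recall the dictionary (made precise in Section~\ref{sec-ff}): if $\chi$ is a non-trivial primitive Dirichlet character modulo $m\in\Fp_p[X]$, then its $L$-function, suitably normalized, is a polynomial
\[
L(\chi,T)=\prod_{j=1}^{N}\bigl(1-\alpha_j(\chi)\,T\bigr),\qquad N=N(m),
\]
whose degree $N$ tends to infinity with $\deg m$, and by the Riemann Hypothesis for curves (Weil) one has $|\alpha_j(\chi)|=\sqrt p$ for all $j$; hence $\Theta_\chi=\mathrm{diag}\bigl(\alpha_j(\chi)p^{-1/2}\bigr)$ is a well-defined conjugacy class in $U(N)$, and the central value is exactly
\[
L(\chi,p^{-1/2})=\prod_{j=1}^N\bigl(1-\alpha_j(\chi)p^{-1/2}\bigr)=\det(1-\Theta_\chi).
\]
So it is enough to show that the values $\det(1-\Theta_\chi)$, as $p$ runs over primes and $\chi$ over non-trivial primitive characters modulo some $m\in\Fp_p[X]$, are dense in $\CC$.

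Next I would fix $z_0\in\CC^{\times}$ and $\eps$ with $0<\eps\leq|z_0|$, and consider
\[
A=A_{z_0,\eps}=\{g\in U(N)\,:\,|\det(1-g)-z_0|<\eps\}.
\]
This set is open, and its boundary lies in $\{g\in U(N):|\det(1-g)-z_0|=\eps\}$, which is $\mu_N$-null because $g\mapsto\det(1-g)$ is real-analytic and non-constant on $U(N)$; so $A$ is a continuity set for Haar measure. By Theorem~\ref{th-rmt} there is $N_0=N_0(z_0,\eps)$, explicitly bounded, with $\mu_N(A)\gg(\eps/|z_0|)^2(\log N)^{-1}>0$ for all $N\geq N_0$. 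I would fix one such $N$ (and correspondingly a fixed value of $\deg m$), and then invoke Deligne's equidistribution theorem for the family of Dirichlet characters of that degree, using Katz's computation of its geometric monodromy group as the unitary group (as recalled in Section~\ref{sec-ff}), to conclude that the classes $\Theta_\chi$ equidistribute in $U(N)$ with respect to $\mu_N$ as $p\to+\infty$. Evaluated on the continuity set $A$, this gives
\[
\lim_{p\to\infty}\frac{\#\{\chi\,:\,\Theta_\chi\in A\}}{\#\{\chi\}}=\mu_N(A)>0,
\]
so for every sufficiently large prime $p$ there is a non-trivial primitive $\chi$ with $\Theta_\chi\in A$, i.e. with $|L(\chi,p^{-1/2})-z_0|<\eps$.

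At this point one knows that the set $S$ of central values meets every $\eps$-ball around every $z_0\in\CC^{\times}$, so $\CC^{\times}\subseteq\overline{S}$; since $\CC^{\times}$ is dense in $\CC$ this yields $\overline{S}=\CC$, which is the assertion. The genuinely substantial inputs are Theorem~\ref{th-rmt} itself --- whose proof rests on the general mod-Gaussian estimate of Section~\ref{sec-proba}, and which is what produces a \emph{positive} lower bound for $\mu_N(A)$ --- and Katz's identification of the monodromy of the family with the unitary group; after that, equidistribution is used only qualitatively. The main obstacle, were one to seek the quantitative refinement (an explicit bound on the least admissible $p$ for given $z_0$ and $\eps$), would be to make the error term in Deligne's equidistribution effective and uniform enough in $N$ --- it is controlled by the Betti numbers and the dimension of the parameter space of the family --- so that it is dominated by $\mu_N(A)\gg(\log N)^{-1}$; for the qualitative density statement, fixing $N$ first and then letting $p\to\infty$ sidesteps this entirely.
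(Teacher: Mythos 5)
Your overall architecture --- reduce central values to $\det(1-\Theta_\chi)$ for Frobenius conjugacy classes in $U(N)$, obtain a positive Haar measure for the target set from Theorem~\ref{th-rmt}, then transfer via equidistribution with $N$ fixed and $p\to+\infty$, and finish with the trivial topological step --- is exactly the paper's (this is Theorem~\ref{th-ff-strong} and its proof). The reduction, the use of Theorem~\ref{th-rmt}, and the final step are fine; your continuity-set argument is not even needed, since for an open set $A$ the portmanteau inequality $\liminf_{p}(\cdots)\geq\mu_N(A)$ already suffices, which is all the paper uses.

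The gap is in the sentence where you ``invoke Deligne's equidistribution theorem for the family of Dirichlet characters of that degree, using Katz's computation of its geometric monodromy group as the unitary group.'' No such computation exists, and none can: the geometric monodromy group of a family parametrized by points of an algebraic variety is (up to a finite center) semisimple, so its maximal compact subgroup is never the full $U(N)$ --- the paper states this explicitly in the remark following the proof of Theorem~\ref{th-ff-strong}. This is precisely the difficulty the paper must work around, and it does so by choosing a specific two-parameter family: the characters $\eta(g_{d,t},\chi)$ attached to the weakly-supermorse polynomials $g_{d,t}=X^d-dX-t$ together with all non-trivial multiplicative characters $\chi$ modulo $p$. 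For fixed $\chi$ with $\chi^{2d}\neq 1$, Katz's theorem gives geometric monodromy $GL_k(d-1)$ with $k=\mathrm{ord}(\chi\chi_2)$, hence equidistribution only in $U_k(d-1)^{\sharp}=\{g\in U(d-1):\det(g)^k=1\}$; the upgrade to full $U(d-1)$-equidistribution (Theorem~\ref{th-katz}) comes from the additional average over $\chi$, using that the only irreducible representations of $U(d-1)$ whose restriction to $U_k(d-1)$ contains the trivial representation are powers of the determinant, and that for a fixed such power only $O(1)$ characters $\chi$ contribute. Without this averaging device (or an appeal to Katz's recent Mellin-transform results, which the paper mentions only as a prospect), your equidistribution step does not go through as stated.
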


\par
\medskip
\par
\textbf{Notation.} As usual, $|X|$ denotes the cardinality of a set.
By $f\ll g$ for $x\in X$, or $f=O(g)$ for $x\in X$, where $X$ is an
arbitrary set on which $f$ is defined, we mean synonymously that there
exists a constant $C\geq 0$ such that $|f(x)|\leq Cg(x)$ for all $x\in
X$. The ``implied constant'' refers to any value of $C$ for which this
holds. It may depend on the set $X$, which is usually specified
explicitly, or clearly determined by the context. Similarly, $f\asymp
g$ for $x\in X$ means $f\ll g$ and $g\ll f$, both for $x\in X$.
We write $(x)_j=x(x+1)\cdots (x+j-1)$ the Pochhammer symbol.
\par
\medskip
\par
\textbf{Acknowledgments.} The first version of this paper was written
while the first author was on sabbatical leave at the Institute for
Advanced Study (Princeton, NJ); many thanks are due to this
institution for its support. This material is based upon work
supported by the National Science Foundation under agreement
No. DMS-0635607.
\par
The second author was partially supported by SNF Schweizerischer
Nationalfonds Projekte Nr. 200021 119970/1.
\par
Thanks to K. Soundararajan for pointing out that the density
conjecture for $\zeta(1/2+it)$ might be a good problem to study using
complex moments of the zeta function, and to R. Heath-Brown for
explaining us the history of the question. Many thanks also to N. Katz
for discussions and explanations surrounding issues of monodromy
computations. Many thanks also to F. Delbaen for useful discussions
concerning the underlying probabilistic framework.
\par
The graphs were produced using \textsc{Sage 4.2}~\cite{sage}, relying
on the Barnes function routines in the \textsc{mpmath} package.

\section{Mod-Gaussian convergence and local probabilities}
\label{sec-proba}

In this section, which is purely probabilistic, we present two
versions of ``local'' bounds for probabilities in the case of
sufficiently uniform mod-Gaussian convergence of sequences of random
vectors. This may be compared with the local central limit theorem
(see, e.g.,~\cite[\S 10.4]{breiman} for the one-dimensional case). In
fact, as F. Delbaen pointed out, one can obtain qualitative statements
that generalize both the standard local central limit theorem and
recover our results below under weaker assumptions. However, our
emphasis is on explicit quantitative lower bounds for local
probabilities.
\par
We first introduce the definition, generalizing~\cite{jkn} to random
vectors.  Fix some integer $m\geq 1$, and let $(X_N)$ be a sequence of
$\Rr^m$-valued random variables defined on a probability space
$(\Omega,\Sigma,\proba)$ (as is the case for convergence in law, we
could work without change with random variables defined on different
probability spaces).  Let
$$
Q_N(t)=Q_N(t_1,t_2,\ldots, t_m)
$$
be a sequence of non-negative quadratic forms on $\Rr^m$. The sequence
$(X_N)$ is then said to be convergent in the mod-Gaussian sense with
covariance $Q_N$ and limiting function $\Phi$ if
\begin{equation}\label{eq-converge}
\lim_{N\ra +\infty}{\exp(Q_N(t)/2)\expect(e^{it\cdot X_N})}=\Phi(t)
\end{equation}
locally uniformly for $t\in\Rr^m$; $\Phi$ is then a function
continuous at $0$ and $\Phi(0)=1$. Here, $\cdot$ denotes the standard
inner product on $\Rr^m$.
\par
The intuitive meaning is that, in some sense, $X_N$ is ``close'' to a
(centered) Gaussian vector $G_N$ with covariance matrix $Q_N$. As
in~\cite{jkn}, this notion is of most interest if the covariance
``goes to infinity''. However, in contrast with the case of $m=1$,
this can mean different things because there is more than a single
variance parameter involved.
\par
To discuss this, we diagonalize $Q_N$ in an orthonormal
basis\footnote{\ With respect to the standard inner product on
  $\Rr^m$.} in the form
$$
Q_N(t)=\delta_{1,N}u_1^2+\cdots+\delta_{m,N} u_m^2,\quad\quad
0\leq \delta_{1,N}\leq \delta_{2,N}\leq \cdots\leq \delta_{m,N}
$$
where $u=H_N(t)$ is the necessary (orthogonal) change of
variable. Then ``$Q_N$ goes to infinity,'' in the weakest sense, means
that the largest eigenvalue $\delta_{m,N}$ goes to $+\infty$ as $N\ra
+\infty$.
\par
We are interested in the distribution of values of $X_N$ as $N$ grows;
clearly, if (say) the $Q_N$ are already diagonalized in the canonical
basis and $\delta_{1,N}$ is constant, these values will have first
coordinate much less spread out than the last ones. To simplify our
discussion, and because this is the situation in our applications, we
will assume this behavior does not occur and that in fact the smallest
eigenvalue goes to infinity.  For simplicity, we will assume in fact
that for some fixed $\mu>0$, we have
\begin{equation}\label{eq-grow-not-too-much}
  \delta_{m,N}\leq \delta_{1,N}^{\mu},\quad\quad \delta_{m,N}\ra +\infty,
\end{equation}
so that also $\delta_{1,N}\ra +\infty$ (we say that the convergence is
\emph{balanced}). In our main applications, this will be the case with
$\mu=1$. Note in particular that it follows that the discriminant
$$
\sigma_N=\delta_{1,N}\cdots\delta_{m,N}\geq \delta_{1,N}^m
$$
goes to infinity as $N\ra +\infty$, and moreover
\begin{equation}\label{eq-mmu}
\sigma_N\leq \delta_{1,N}^{m\mu}.
\end{equation}
\par
Our question is now the following: given $(X_N)$, $(Q_N)$, as above,
with this type of mod-Gaussian convergence, can we bound from below
the probability
$$
\proba(X_N\in U),
$$
where $U$ is a fixed open set in $\Rr^m$? 
\par
Denoting by $\tilde{Q}_N(x)$ the dual quadratic form, the Gaussian
model suggests that, if $U$ is relatively compact (e.g., some
non-empty open ball), we could expect
\begin{equation}\label{eq-model}
\proba(X_N\in U)
\approx \proba(G_N\in U)=
\frac{1}{(2\pi)^{m/2}\sqrt{\sigma_N}}
\int_{U}{
e^{-\tilde{Q}_N(x)/2}dx
}
\sim 
\frac{\vol(U)}{(2\pi)^{m/2}\sqrt{\sigma_N}},
\end{equation}
as $N\ra +\infty$, since $\delta_{1,N}\ra +\infty$ implies that
$\tilde{Q}_N(x)\ra 0$ for all $x\in\Rr^m$. We will confirm that this
holds in certain conditions at least. We strive especially for lower
bounds on $\proba(X_N\in U)$, which we wish to be quantitative, so
that we can determine some $N_0$ (depending explicitly on $U$) for
which
$$
\proba(X_{N_0}\in U)>0.
$$
\par
Note that such a quantitative result must depend on the location of
the open set $U$, whereas the limit itself only depends on the volume,
as seen above.
\par
The specific hypothesis we use may seem somewhat arbitrary, but they
turn out to be satisfied (with room to spare) in the later
applications.

\begin{theorem}\label{th-2}
  Let $m\geq 1$ be fixed and let $(X_N)$ be a sequence of
  $\Rr^m$-valued random variables defined on $(\Omega,\Sigma,\proba)$,
  such that $(X_N)$ converges in the mod-Gaussian sense with
  covariance $(Q_N)$, and that the convergence is $\mu$-balanced with
  $\mu>0$, with $\sigma_N\geq 1$ for all $N$. Let $(G_N)$ be Gaussian
  random variables with covariance matrices given by $(Q_N)$, so that
$$
\exp(-Q_N(t)/2)=\expect(e^{it\cdot G_N}).
$$
\par
Assume moreover the following three conditions:
\par
\emph{(1)} There exist constants $a>0$, $\alpha>0$ and $C>0$ such
that, for any $N\geq 1$ and $t\in\Rr^m$ such that $\|t\|\leq
\sigma_N^a$, we have
\begin{equation}\label{eq-strong-convergence}
\expect(e^{it\cdot X_N})=\Phi(t)\exp(-Q_N(t)/2)
\Bigl\{
1+O\Bigl(\frac{1}{\exp(\alpha \sigma_N^C)}\Bigr)
\Bigr\}.
\end{equation}
\par
\emph{(2)} The function $\Phi$ is of class $C^1$ on $\{\|t\|<2\}$.
\par
\emph{(3)} For some $A\geq 1$ and $\beta\geq 0$, we have
\begin{equation}\label{eq-order-2}
|\Phi(t)|\ll \exp(\beta \|t\|^{A}),
\end{equation}
for $t\in \Rr^m$.
\par
Let $D>0$ be any number such that
\begin{equation}\label{eq-d}
  D>2(m+1+\max\{a^{-1},A/C, 3m(m+1)\mu A\}).
\end{equation}
\par
Then, for any fixed non-empty open box
$$
U=\{x\in \Rr^m \,\mid\, \|x-x_0\|_{\infty}<\eps\}\subset \Rr^m,
$$
with width $\eps$ such that $0<\eps\leq 1$, we have
\begin{equation}\label{eq-quantitative}
\proba(X_N\in U) =
\proba(G_N\in U)
+O\Bigl(\frac{1}{\sigma_N^{1/2+1/D}}+\frac{\eps^{-m}}{\sigma_N}\Bigr),
\end{equation}
for $N\geq 1$, where the implied constant depends only on $(m, \Phi,
a,\alpha,C)$ and the implied constant
in~\emph{(\ref{eq-strong-convergence})}.
\par
In particular, for any fixed non-empty open set $U\subset \Rr^m$, we
have
$$
\proba(X_N\in U)\gg \frac{1}{\sqrt{\sigma_N}}
$$
provided $N\geq N_0$, where $N_0$ and the implied constant depend on
$U$ and the same data as above.
\end{theorem}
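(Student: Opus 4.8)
The plan is a Fourier‑inversion argument in the spirit of the local central limit theorem, with a mollification handling simultaneously the boundary of $U$ and the frequencies on which~(\ref{eq-strong-convergence}) says nothing. Fix the box $U$ and a scale $\eta\in(0,1]$ to be chosen, pick a fixed smooth bump $\rho$ of unit mass supported in the unit ball, put $\rho_\eta(x)=\eta^{-m}\rho(x/\eta)$, and set $f_{\pm}=\charfun_{U_{\pm\eta}}*\rho_\eta$ with $U_{\pm\eta}=\{x:\|x-x_0\|_\infty<\eps\pm\eta\}$, so that $f_-\le\charfun_U\le f_+$ and hence $\expect(f_-(X_N))\le\proba(X_N\in U)\le\expect(f_+(X_N))$, and likewise for $G_N$. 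It then suffices to bound $\expect(f_{\pm}(X_N))-\expect(f_{\pm}(G_N))$ and $\expect(f_{\pm}(G_N))-\proba(G_N\in U)$. Note $\widehat{f_{\pm}}(t)=\widehat{\charfun_{U_{\pm\eta}}}(t)\,\widehat\rho(\eta t)$ satisfies $|\widehat{f_{\pm}}(t)|\ll_k\eps^m(1+\eta\|t\|)^{-k}$ for every $k\ge 0$ (in the operative regime $\eta\le\eps$).

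By Fourier inversion,
$$
\expect(f_{\pm}(X_N))-\expect(f_{\pm}(G_N))=\frac{1}{(2\pi)^m}\int_{\Rr^m}\widehat{f_{\pm}}(t)\bigl(\expect(e^{it\cdot X_N})-e^{-Q_N(t)/2}\bigr)\,dt ,
$$
and I would split at $\|t\|=\sigma_N^{a}$. On $\|t\|\le\sigma_N^{a}$, hypothesis~(\ref{eq-strong-convergence}) turns the integrand into $\widehat{f_{\pm}}(t)\,e^{-Q_N(t)/2}(\Phi(t)-1)$ up to a term $\ll|\widehat{f_{\pm}}(t)|\,e^{-Q_N(t)/2}|\Phi(t)|\,e^{-\alpha\sigma_N^{C}}$; since $|\Phi(t)|\ll e^{\beta\sigma_N^{aA}}$ there by~(\ref{eq-order-2}), after shrinking the exponent $a$ so that $aA<C$ (legitimate, as (\ref{eq-strong-convergence}) only improves for smaller $a$) this is super‑polynomially small in $\sigma_N$. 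For the main term I split once more at $\|t\|=1$: for $\|t\|\le1$ use $\Phi\in C^1(\{\|t\|<2\})$ with $\Phi(0)=1$, hence $|\Phi(t)-1|\ll\|t\|$, together with the sharp Gaussian moment bound obtained by diagonalising $Q_N$,
$$
\int_{\Rr^m}\|t\|\,e^{-Q_N(t)/2}\,dt\ \ll\ \frac{1}{\sqrt{\sigma_N}}\sum_{j=1}^{m}\delta_{j,N}^{-1/2}\ \ll\ \frac{1}{\sqrt{\sigma_N\,\delta_{1,N}}}\ \ll\ \sigma_N^{-1/2-1/(2m\mu)} ,
$$
the last step using $\delta_{1,N}\ge\sigma_N^{1/(m\mu)}$ from~(\ref{eq-mmu}); this gives a contribution $\ll\eps^m\sigma_N^{-1/2-1/(2m\mu)}$. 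For $1\le\|t\|\le\sigma_N^{a}$ bound $|\Phi(t)-1|\ll e^{\beta\|t\|^{A}}$ by~(\ref{eq-order-2}) again and, after a further shrinking of $a$ ensuring $\delta_{1,N}\gg\|t\|^{A-2}$ on this range (possible since $\delta_{1,N}$ is a fixed power of $\sigma_N$), note $e^{\beta\|t\|^{A}-Q_N(t)/2}\ll e^{-\delta_{1,N}\|t\|^2/4}$, so this piece is super‑polynomially small. On $\|t\|>\sigma_N^{a}$ the Gaussian part is super‑polynomially small ($Q_N(t)\ge\delta_{1,N}\|t\|^2$ with $\delta_{1,N}\to\infty$), while the $X_N$‑part is $\le\int_{\|t\|>\sigma_N^{a}}|\widehat{f_{\pm}}(t)|\,dt\ll\eps^m\int_{\|t\|>\sigma_N^{a}}(1+\eta\|t\|)^{-k}\,dt\ll\sigma_N^{-1}$, provided $\eta\gg\sigma_N^{-a}$ and $k$ is fixed large in terms of $m$ and $a$.

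For the boundary term, $|\expect(f_{\pm}(G_N))-\proba(G_N\in U)|$ is at most the $G_N$‑mass of an $O(\eta)$‑shell around $\partial U$, hence $\ll\sigma_N^{-1/2}\eps^{m-1}\eta$ since the Gaussian density is $\le(2\pi)^{-m/2}\sigma_N^{-1/2}$. Taking $\eta\asymp\sigma_N^{-a/2}$ (so $\sigma_N^{-a}\ll\eta\ll1$), all the above errors are $\ll\sigma_N^{-1/2-1/D}$ as soon as~(\ref{eq-d}) holds: $D>2m\mu$ absorbs the $1/(2m\mu)$ term, $D>a^{-1}$ the smoothing scale, and the terms $A/C$ and $3m(m+1)\mu A$ in~(\ref{eq-d}) absorb the two shrinkings of $a$. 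This yields $\proba(X_N\in U)=\proba(G_N\in U)+O(\sigma_N^{-1/2-1/D})$ whenever $\sigma_N>\eps^{-m}$; when $\sigma_N\le\eps^{-m}$ the estimate~(\ref{eq-quantitative}) is trivial, since $|\proba(X_N\in U)-\proba(G_N\in U)|\le 1\le\eps^{-m}/\sigma_N$, which accounts for the second error term. Finally, for the concluding assertion: $\tilde{Q}_N(x)\le\|x\|^2/\delta_{1,N}\to0$ uniformly on the bounded box $U$, so $\proba(G_N\in U)=(2\pi)^{-m/2}\sigma_N^{-1/2}\vol(U)(1+o(1))\gg\eps^m\sigma_N^{-1/2}$, and both error terms $\sigma_N^{-1/2-1/D}$ and $\eps^{-m}\sigma_N^{-1}$ are $o(\eps^m\sigma_N^{-1/2})$ once $\sigma_N$ exceeds an explicit bound depending on $\vol(U)$ and $\|x_0\|$, i.e.\ once $N\ge N_0(U)$.

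I expect the genuine difficulty to be not this Fourier skeleton but the quantitative and uniform bookkeeping: choosing $\eta$ and the shrunk exponent $a$ so that every error is subsumed by the two displayed terms with an implied constant independent of both $\eps$ and $D$, and handling the degenerate ranges (small $\sigma_N$, or $\eps$ small compared with $\sigma_N^{-a}$) where one reverts to trivial bounds and uses the lower bound~(\ref{eq-d}) on $D$. A secondary but essential ingredient is the sharp Gaussian moment estimate above, which exploits the whole eigenvalue profile of $Q_N$ rather than the crude $Q_N(t)\ge\delta_{1,N}\|t\|^2$; this is what turns the balance hypothesis~(\ref{eq-grow-not-too-much})/(\ref{eq-mmu}) into the quantitative gain $\sigma_N^{-1/(2m\mu)}$ past the bare exponent $1/2$.
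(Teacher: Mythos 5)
Your proof is essentially correct and follows the same Fourier-analytic skeleton as the paper's, but arranges it differently, so a brief comparison is worthwhile. Both proofs mollify the indicator of the box, pass to Fourier transforms via Plancherel, split the $t$-integral into low/middle/high ranges, and balance the resulting errors against the mollification scale. The paper's proof uses a product bump function $f_0$ scaled by $\eps$ whose bandwidth is controlled by the parameter $\Delta=(1-w)^{-1}$, with $w$ chosen at the end; it cuts the integral at $\|t\|=R_N=\sigma_N^{1/B}$ and again at the ellipsoid $Q_N(t)=\kappa_N^2$ with $\kappa_N=\sigma_N^{A/B}$, and the condition~(\ref{eq-d}) arises as $D=2(m+1+B)$ with $B>\max(a^{-1},A/C,3m(m+1)A\mu)$. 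You instead mollify by convolving $\charfun_{U_{\pm\eta}}$ with a scale-$\eta$ bump, cut at $\|t\|=\sigma_N^{a'}$ (for a shrunk $a'<a$) and then at $\|t\|=1$, and use the sharper Gaussian moment bound
$\int_{\Rr^m}\|t\|\,e^{-Q_N(t)/2}\,dt\ll\sigma_N^{-1/2}\delta_{1,N}^{-1/2}\ll\sigma_N^{-1/2-1/(2m\mu)}$,
which exploits the whole spectrum of $Q_N$ and is tighter than the paper's estimate via the $\kappa_N^{m+1}$ factor. Your cutoff at $\|t\|=1$ also sidesteps the paper's condition that $\{Q_N(t)\leq\kappa_N^2\}\subset\{\|t\|\leq 1\}$. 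These are genuine simplifications.

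That said, the bookkeeping you flag as the ``genuine difficulty'' does deserve a second look. Two spots are loose. First, the phrase ``the terms $A/C$ and $3m(m+1)\mu A$ in~(\ref{eq-d}) absorb the two shrinkings of $a$'' requires verification: with the boundary error $\eps^{m-1}\sigma_N^{-1/2-a'/2}$ one needs $D\geq 2/a'$, and if you shrank $a'$ all the way to, say, $C/(2A)$ this would demand $D\geq 4A/C$, which~(\ref{eq-d}) does not supply when $A/C>m+1$. You need to shrink $a'$ only to $a'=(1-\delta)\min(a,C/A,1/(m\mu(A-2)))$ with $\delta>0$ small enough (depending on $m,A,C,\mu$) so that $2/a'$ is still below the bound on $D$; this works, but the constant then depends on $A$ and $\mu$ (which is consistent with the paper's proof, whose constants likewise implicitly depend on $A$ and $\mu$ through $B$). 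Second, your bifurcation ``the main estimate when $\sigma_N>\eps^{-m}$, trivial bound when $\sigma_N\leq\eps^{-m}$'' does not line up with the regime where the bound $|\widehat{f_\pm}(t)|\ll\eps^m(1+\eta\|t\|)^{-k}$ is valid, namely $\eta\leq\eps$, i.e.\ $\sigma_N\geq\eps^{-2/a'}$. In the intermediate range $\eps^{-m}<\sigma_N<\eps^{-2/a'}$ one has $\eta>\eps$, so $U_{-\eta}$ may be empty and the $\eps^m$ in the Fourier bound becomes $\eta^m$. This is not a real obstruction---there $\proba(G_N\in U)\ll\sigma_N^{-1/2}\eps^m\leq\sigma_N^{-1/2-a'm/2}\ll\sigma_N^{-1/2-1/D}$ and the $f_+$ upper bound still goes through with $\eta^m\leq 1$ in place of $\eps^m$---but it needs to be said. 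With these two points made precise, your proof is a valid alternative to the one in the paper.
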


Note the following elementary lower bound, valid if $\eps\leq 1$:
\begin{equation}\label{eq-lower-easy}
\proba(G_N\in U)\gg \frac{\eps^m}{\sqrt{\sigma_N}}
\exp\Bigl(-\frac{\tilde{Q}_N(x_0)}{2}\Bigr)
\end{equation}
where the implied constant depends only on $m$; this is where the
location of $U$ enters, since the error term will only be smaller than
this, roughly, when $\tilde{Q}_N(x_0)\asymp 1$.

\begin{remark}
The growth condition~(\ref{eq-order-2}) is in fact a consequence of
the uniform mod-Gaussian convergence, at least provided the sequence
$(\sigma_N)$ does not grow too fast. For instance, if
\begin{equation}\label{eq-growth-condition}
\sigma_{N+1}\leq M\sigma_N^B
\end{equation}
for all $N\geq 1$, for some constants $M\geq 1$, $B\geq 0$, we can
obtain~(\ref{eq-order-2}) with $A=2+B/a$. Indeed, we can write
$$
|\Phi(t)|=\Bigl|\Phi_N(t)e^{Q_N(t)/2}\Bigl(1+O\Bigl(
\frac{1}{\exp(\alpha \sigma_N^C)}\Bigr)\Bigr)\Bigr|\leq 2 e^{Q_N(t)/2}
$$
if $N$ is large enough and $\|t\|\leq \sigma_N^a$. Note then that
$$
Q_N(t)\leq \delta_{m,N}\|t\|^2\leq \delta_{1,N}^m\|t\|^2\leq
\sigma_N\|t\|^2.
$$
\par
We now fix $N\geq 1$ minimal such that
$$
\sigma_{N-1}\leq \|t\|^{1/a}\leq \sigma_N,
$$
and if this value of $N$ is large enough, we get $\sigma_N\leq
C\sigma_{N-1}^B\leq M\|t\|^{B/a}$, and hence
$$
|\Phi(t)|\leq 2\exp(\sigma_N\|t\|^2)\leq 2\exp(M\|t\|^{2+B/a}),
$$
as desired. On the other hand, if this chosen $N$ is too small,
$\|t\|$ is bounded, and the desired estimate is trivial.
\end{remark}

\begin{proof}[Proof of Theorem~\ref{th-2}]
  Let $\delta_N=\delta_{1,N}$ be the smallest eigenvalue of $Q_N$, so
  that $Q_N(t)\geq \delta_N\|t\|^2$ for all $t\in\Rr^m$ and $N\geq
  1$. For simplicity, we denote also
\begin{equation}
\label{eq-uniformity}
\gamma_N=\exp(\alpha\sigma_N^{C}),
\end{equation}
as in~(\ref{eq-strong-convergence}).
\par
We now first fix $w$ such that $0<w<1$, and then fix a smooth,
compactly supported function $g_0$ on $\Rr$ such that
\begin{gather*}
  0\leq g_0\leq 1,\\
  g_0(x)=0\text{ for } |x|\geq 1,\quad\quad
  g_0(x)=1\text{ for } |x|\leq w,\\
  |g_0^{(j)}(x)|\ll_j \Delta^j,\text{ for } j\geq 0,\ x\in\Rr,
\end{gather*}
where $\Delta=(1-w)^{-1}$ and the implied constant depends only on $j$
(we will define $w$ to be a function of $N$ at the end, and hence must
be careful to have estimates uniform in terms of $w$; this is provided
by using only the above properties of $g_0$; the maximal value of $j$
used will also be bounded only in terms of $m$, $\Phi$ and the data
in~(\ref{eq-strong-convergence})). It is classical that such a
function exists (examples are constructed in~\cite[\S
1.4]{hormander}). Then define
$$
f_0(x)=f_0(x_1,\ldots,x_m)=\prod_{1\leq j\leq m}{g_0(x_j)}
$$
for $x\in\Rr^m$. It follows that
\begin{gather*}
  0\leq f_0\leq 1,\\
  f_0(x)=0\text{ for } \|x\|_{\infty}\geq 1,\quad\quad
  f_0(x)=1\text{ for } \|x\|_{\infty}\leq w.
\end{gather*}
\par
Next, we define
$$
f(x)=f_0\Bigl(\frac{x-x_0}{\eps}\Bigr),
$$
and we start our argument with the obvious inequality
$$
\proba(X_N\in U)\geq \expect(f(X_N))
=\int_{\Rr^m}{f(x)d\nu_N(x)}
$$
where $\nu_N$ is the law of $X_N$. Applying the Plancherel formula, we
get
$$
\proba(X_N\in U)\geq \int_{\Rr^m}{f(x)d\nu_N(x)}=
\int_{\Rr^m}{\hat{f}(t)\Phi_N(t)dt}
$$
where $\Phi_N(t)=\expect(e^{it\cdot X_N})$ is the characteristic
function of $X_N$ and 
$$
\hat{f}(t)=\frac{1}{(2\pi)^{m/2}}\int_{\Rr^m}{f(x)e^{-it\cdot x}dx}
$$
denotes the Fourier transform of $f$ (the smoothness of $f$ guarantees
that $\hat{f}$ is in $L^1$, so the Plancherel formula is valid by a
simple Fubini argument).
\par
We have
$$
\hat{f}(t)=\eps^{m}e^{-it\cdot x_0} \hat{f}_0(\eps t) =\eps^m
e^{-it\cdot x_0} \prod_{1\leq j\leq m}{\hat{g}_0(\eps t_j)},\quad\quad
t\in \Rr^m.
$$
\par
Since 
$$
\hat{g}_0(t)=\frac{1}{\sqrt{2\pi}}
\frac{1}{(it)^j}\int_{\Rr}{g_0^{(j)}(x)e^{-itx}dx}
$$
for $t\not=0$ and $j\geq 0$ (by repeated integration by parts), we
find that
$$
|\hat{g}_0(t)|\ll \min(1,\Delta^j|t|^{-j}),
$$
the implied constant depending on $j$.
\par
Using the formula for $\hat{f}(t)$, selecting for given $t$ an index
$j$ so that $\|t\|_{\infty}=|t_j|$ and applying the second upper bound
above for this index only, if $|t_j|\geq 1$, we derive
\begin{equation}\label{eq-estimate-hatf}
  |\hat{f}(t)|\ll \min(\eps^m,\Delta^{B+m}\eps^{-B}
  \|t\|_{\infty}^{-B-m}),
  \quad\quad t\in\Rr^m,
\end{equation}
for any fixed $B\geq 1$, where the implied constant depends only on
$m$ and $B$.  In particular, for $\|t\|_{\infty}\leq 1$, we will use
simply the upper bound $|\hat{f}(t)|\leq
\eps^m\|\hat{f}_0\|_{\infty}\leq \eps^m$.
\par
We now proceed to approximate.  First of all, for any radius $R_N\geq
1$, we can estimate the contribution of those $t$ with $\|t\|>R_N$
using the estimate above with a value of $B\geq 1$ which will be
determined later.
After integrating over $\|t\|_{\infty}>R_N$, we obtain
$$
\int_{\|t\|> R_N}{\hat{f}(t)\Phi_N(t)dt} \ll \eps^{-B}\Delta^{m+B} R_N^{-B}
$$
for any $R_N\geq 1$. After selecting $R_N=\sigma_N^{1/B}\geq 1$, we
obtain
\begin{equation}\label{eq-largebis}
  \int_{\|t\|> R_N}{\hat{f}(t)\Phi_N(t)dt}\ll \eps^{-B}\Delta^{m+B}
  \sigma_N^{-1},
\end{equation}
for $N\geq 1$, the implied constant depending only on $f_0$.
\par
On the other hand, provided $B>1/a$, we
use~(\ref{eq-strong-convergence}) and~(\ref{eq-order-2}) to get
\begin{align*}
\int_{\|t\|\leq R_N}{\hat{f}(t)\Phi_N(t)dt}&=
\int_{\|t\|\leq R_N}{\hat{f}(t)\Phi(t)\exp(-Q_N(t)/2)
\Bigl\{
1+O\Bigl(\frac{1}{\gamma_N}\Bigr)
\Bigr\}dt}\\
&=\int_{\|t\|\leq R_N}{\hat{f}(t)\Phi(t)\exp(-Q_N(t)/2)dt}+
O\Bigl(\gamma_N^{-1}
\int_{\|t\|\leq R_N}{|\Phi(t)\hat{f}(t)|dt}
\Bigr)\\
&=\int_{\|t\|\leq R_N}{\hat{f}(t)\Phi(t)\exp(-Q_N(t)/2)dt}+
O(\eps^{m}\gamma_N^{-1}\exp(\beta R_N^A))
\end{align*}
for $N\geq 1$, using again the definition of $f$, the implied constant
depending on $\Phi$.
\par
By~(\ref{eq-uniformity}), the last term can be bounded by
$$
\eps^{m}\gamma_N^{-1}\exp(\beta R_N^A)\ll
\eps^{m}\exp(\beta\sigma_N^{A/B}-\alpha\sigma_N^{C}) \ll
\eps^{m}\sigma_N^{-1}
$$
for $N\geq 1$ if $B>A/C$, the implied constant depending on $(\alpha,
A,B, C)$.
\par
We then split the first term 
further in two parts, namely where $Q_N(t)\leq \kappa_N^2$, and where
$Q_N(t)>\kappa_N^2$. The parameter $\kappa_N$ will be chosen later, in
such a way that the region $\{\|t\|\leq 1\}$ (which is inside
$\{\|t\|\leq R_N\}$) contains the region $Q_N(t)\leq \kappa_N^2$
(which is a neighborhood of $0$ that contracts to $0$ as $N\ra
+\infty$, if $\kappa_N$ does not grow too fast, since it is an
ellipsoid with longest axis $\kappa_N/\sqrt{\delta_{m,N}}$).
\par
The second part of the integral is bounded by
\begin{align*}
  \int_{\|t\|\leq R_N,\ Q_N(t)>\kappa_N^2}{
    \hat{f}(t)\Phi(t)e^{-Q_N(t)/2}dt } &\ll
  \eps^{m}R_N^m\exp(\beta R_N^A-\kappa_N^2/2)
  \\&=\eps^{m}\exp\Bigl(\frac{m}{B}\log
  \sigma_N+\beta \sigma_N^{A/B}-\frac{\kappa_N^2}{2}\Bigr),
\end{align*}
and in the first part, we use the approximation
$$
\Phi(t)=1+O(\|t\|)
$$
for $\{\|t\|\leq 1\}$, coming from the $C^1$ assumption on $\Phi$, to
get
\begin{align}
  \int_{Q_N(t)\leq \kappa_N^2}{ \hat{f}(t)\Phi(t)e^{-Q_N(t)/2}dt }
  &=\int_{Q_N(t)\leq \kappa_N^2}{ \hat{f}(t)e^{-Q_N(t)/2}dt }
  +O\Bigl(\int_{Q_N(t)\leq \kappa_N^2}{ |\hat{f}(t)|\|t\|dt
  }\Bigr)\nonumber\\
  &=\int_{Q_N(t)\leq \kappa_N^2}{ \hat{f}(t)e^{-Q_N(t)/2}dt }
  +O\Bigl(\frac{\eps^{m}\kappa_N^{m+1}}{\sqrt{\delta_N\sigma_N}}\Bigr),
\label{eq-close-range}
\end{align}
the implied constant depending on $\Phi$, where the last integral was
estimated using
$$
\|t\|^2\leq \frac{1}{\delta_N}Q_N(t)\leq
\frac{\kappa_N^2}{\delta_N},\quad\quad
|t_i|\leq \frac{\kappa_N}{\sqrt{\delta_{i,N}}}.
$$
\par
We can rewind the computation for the first term, with the Gaussian
$G_N$ instead of $X_N$: for $N\geq 1$, we have
$$
\int_{Q_N(t)\leq \kappa_N^2}{ 
\hat{f}(t)e^{-Q_N(t)/2}dt }=\expect(f(G_N))
+O(\eps^{m}e^{-\kappa_N^2/2})
$$
where the implied constant is absolute, and then we write
\begin{align*}
  \expect(f(G_N)) &=
  \proba(\|G_N-x_0\|_{\infty}<\eps)+O(\proba(w\eps\leq
  \|G_N-x_0\|_{\infty}<\eps))\\
  &=\proba(\|G_N-x_0\|_{\infty}<\eps)+ O(\eps^m(1-w)\sigma_N^{-1/2})
\end{align*}
for $N\geq 1$, where the implied constant depends only on $m$ (the
last step is obtained using the density of the Gaussian $G_N$).
\par
Summarizing, we have found
\begin{multline*}
\proba(X_N\in U)\geq \proba(G_N\in U)+O\Bigl(\frac{\eps^m(1-w)}
{\sigma_N^{1/2}}
+\frac{\Delta^{m+B}}{\eps^m\sigma_N}\\
+\eps^m\exp\Bigl(\frac{m}{B}\log
\sigma_N+\beta\sigma_N^{A/B}-\frac{\kappa_N^2}{2}\Bigr)
+\frac{\eps^m\kappa_N^{m+1}}{\sqrt{\sigma_N\delta_N}}
\Bigr)
\end{multline*}
(where we recall that $\Delta^{-1}=1-w$).
\par
Now if $A/B<1/(m\mu)$, and
$$
\kappa_N=\sigma_N^{A/B},
$$
we have first (see~(\ref{eq-mmu})) the condition
$$
\{Q_N(t)\leq \kappa_N^2\}\subset \{\|t\|^2\leq
\frac{\kappa_N^2}{\delta_N}\}
\subset \{\|t\|\leq 1\},
$$
and moreover the third error term is absorbed in the second one, while
the last becomes
$$
\frac{\eps^{m}\kappa_N^{m+1}}{\sqrt{\sigma_N\delta_N}}\leq
\eps^{m}\sigma_N^{-\tfrac{1}{2}-\tfrac{1}{2m\mu}+\tfrac{(m+1)A}{B}}.
$$
\par
Thus if we select any $B>\max(a^{-1}, A/C, 3m(m+1)A\mu)$, the previous
conditions on $B$ hold, and we find that
\begin{equation}\label{eq-lower-bound-interm}
\proba(X_N\in U)\geq \proba(G_N\in U)+
O\Bigl(\frac{\eps^m(1-w)}{\sigma_N^{1/2}}
+\frac{\Delta^{m+B}}{\eps^m\sigma_N}
+\frac{\eps^m}{\sigma_N^{1/2+1/(6m\mu)}}
\Bigr).
\end{equation}
\par
Now, we attempt to select $w$ to equalize the error terms
involving it, i.e., so that
$$
\frac{\eps^m(1-w)}{\sqrt{\sigma_N}}=\frac{\Delta^{m+B}}{\eps^m \sigma_N},
$$
which translates to
$$
\Delta=(1-w)^{-1}=(\eps^{2m}\sigma_N^{1/2})^{1/(m+B+1)}
$$
and two cases arise:
\par
(i) If $\sigma_N^{1/2}>\eps^{-2m}$, we have $\Delta>1$ (as is
necessary to define $w$ in this way), and we obtain from the above
that 
$$
\proba(X_N\in U)\geq \proba(G_N\in U)+
O\Bigl(\frac{1}{\sigma_N^{1/2+1/D}}
\Bigr),
$$
where $D=2(m+1+B)$ (note that, since $A$ is assumed to be $\geq 1$, we
have $2(m+B+1)>6m\mu$.)
\par
(ii) If we have $\sigma_N^{1/2}\leq \eps^{-2m}$, we take simply
$w=1/2$ and obtain
$$
\frac{\eps^m(1-w)}{\sqrt{\sigma_N}}+\frac{\Delta^{m+B}}{\eps^m
  \sigma_N}\ll \frac{\eps^{-m}}{\sigma_N},
$$
where the implied constant depends on $m$ and $B$. 
\par
The combination of the two cases leads to the lower-bound
in~(\ref{eq-quantitative}). The upper bound is proved similarly, using
instead of $g_0$ a function which is $=1$ for $|x|\leq 1$ and $=0$ for
$|x|\geq 1+w$ for some suitable $w>0$; we leave the details to the
reader.
\end{proof}

\begin{remark}\label{rm-lower-bound}
  If we are interested in obtaining a lower bound only (which is the
  most interesting aspect in a number of applications), it is simpler
  and more efficient to fix, e.g., $w=1/2$, from the beginning of the
  proof. For
$$
U=\{x\in\Rr^m\,\mid\, \|x-x_0\|_{\infty}<\eps\},
$$
this leads for instance to
\begin{equation}\label{eq-quant-lower}
\proba(X_N\in U)\geq \proba(G_N\in U_-)+
O\Bigl(
\frac{1}{\sigma_N^{1/2+1/(2m\mu)-\gamma}}
\Bigr)
\end{equation}
for any $\gamma>0$ (by taking $B$ large enough depending on $\gamma$),
where
$$
U_-=\{x\in\Rr^m\,\mid\, \|x-x_0\|_{\infty}<\eps/2\}
$$
and the implied constant depends on $\Phi$, $(m, a,\alpha,C)$ and
$\gamma$.
\end{remark}

\begin{remark}\label{rm-tv}
  From the probabilistic point of view, this proposition gives one
  answer, quantitatively, to the following type of question: given a
  sequence $(X_N)$ of (real-valued) random variables and parameters
  $\sigma_N\ra +\infty$ such that $X_N/\sigma_N$ converges in law to a
  standard centered Gaussian variable (with variance $1$), to what
  extent is $X_N$ itself distributed like a Gaussian with variance
  $\sigma_N^2$?
\par
Here, we perform the comparison by looking at $\proba(X_N\in U)$, $U$
a fixed open set. And this shows clear limits to the Gaussian model:
for example, any integer-valued random variable $X_N$ will have
$\proba(X_N\in U)=0$ for any open set not intersecting $\Zz$, and yet
may satisfy a Central Limit Theorem (e.g., the $N$-th step of a
symmetric random walk on $\Zz$). Even more precisely, denoting
$$
d_K(X,Y)=\sup_{x\in\Rr}{|\proba(X\leq x)-\proba(Y\leq x)|}
$$
the Kolmogorov distance, there exist integer-valued random variables
$X_N$ with
\begin{equation}\label{eq-dist-k}
  d_K(X_N,G_N)\ll N^{-1/2},
\end{equation}
where $G_N$ is a centered Gaussian with variance $N$, which indicates
a close proximity -- and yet, again, $\proba(X_N\in U)=0$ if
$U\cap\Zz=\emptyset$. For instance, take $X_N$ with distribution
function $F_N(t)=\proba(X_N\leq t)$ given by
$$
F_N(t)=\proba(G_N\leq k)\quad\text{ for } k\leq t<k+1,\ k\in\Zz.
$$
\par
In particular, it is also impossible to prove results like
Theorem~\ref{th-2} using only this type of assumption on the
Kolmogorov distance. Of course, if we assume that
\begin{equation}\label{eq-impossible}
  d_K(X_N,G_N)\ll N^{-1/2}\phi(N)^{-1},
\end{equation}
with $\phi(N)\ra +\infty$ (arbitrarily slowly), we get
$$
\proba(X_N\in U)\geq \proba(G_N\in U)-2d_K(X_N,G_N)\gg N^{-1/2},
$$
for $U=[\alpha,\beta]$, $\alpha<\beta$. But such an assumption is
unrealistic in practice. For instance, if one assumes that $(X_N)$
converges in the mod-Gaussian sense with covariance $Q_N(t)=\sigma_N
t^2$, and if the limiting function $\Phi$ is $C^1$ and the convergence
holds in $C^1$ topology, one can straightforwardly estimate the
Kolmogorov distance\footnote{\ Using the Berry-Esseen inequality, see
  e.g.~\cite[\S 7.6]{tenenbaum}.}  by
$$
d_{K}(X_N,G_N)=\sup_{x\in\Rr}{|\proba(X_N\leq x)-\proba(G_N\leq x)|}
\ll \sigma_{N}^{-1/2},
$$
which is comparable to~(\ref{eq-dist-k}), but one can also check that
this can not be improved in general to something
like~(\ref{eq-impossible}). 
\par 
In Example~\ref{ex-eigenvalue-count} in Section~\ref{sec-examples}, we
will also describe a much deeper and more illuminating situation
concerning the limits of what can be hoped, even with something like
mod-Gaussian convergence.
\end{remark}

\begin{remark}\label{rm-variant}
  Other variants could easily be obtained. In particular, it is clear
  from the proof that if $\Phi$ has sub-gaussian growth, i.e., we can
  take $A=2$ in~(\ref{eq-order-2}), the results can be substantially
  improved. However, in our main applications, this condition
  fails. Also, one could use test functions $f$ which decay at
  infinity faster than polynomials to weaken the uniformity
  requirement in the convergence
  condition~(\ref{eq-strong-convergence}) (for instance, for $m=1$, it
  is possible to find $f$ which is smooth, non-negative and compactly
  supported in any fixed open interval and satisfies
$$
\hat{f}(t)\ll \exp(-|t|^{1-\eps})
$$
for any $\eps>0$, as constructed, e.g., in~\cite[Th.
1.3.5]{hormander} or~\cite{ingham}). Again, for our main unconditional
applications, our conditions hold with room to spare, so we avoided
this additional complexity. 

\end{remark}

\begin{remark}
  We can also introduce a linear term (corresponding roughly to the
  expectation of $X_N$) in addition to the covariance terms in the
  definition of mod-Gaussian convergence (as in~\cite{jkn} for $m=1$),
  but this amounts to saying that $(X_N)$ converges in the
  mod-Gaussian sense with covariance $(Q_N)$ and mean $(\xi_N)$,
  $\xi_N\in \Rr^m$, if the sequence $(X_N-\xi_N)$ converges in our
  original sense above. But note that the interpretation of a lower
  bound for $\proba(X_N-\xi_N\in U)$, as given by
  Theorem~\ref{th-2} for a fixed $U\subset \Rr^m$, is quite
  different, if $\xi_N$ is itself ``large''. Maybe one should see the
  statements in that case as giving natural examples of sets $U_N$ for
  which one knows that $\proba(X_N\in U_N)$ has the specific decay
  behavior $\sigma_N^{-1/2}$ as $N\ra +\infty$.
\par
In particular, lower bounds for $\proba(X_N-\xi_N\in U)$ do not give
control of $\proba(X_N\in U)$, and indeed this may be zero for all $N$
large enough (see, for instance the example in
Section~\ref{ssec-symplectic} below of values at $1$ of characteristic
polynomials of unitary symplectic matrices, which is always $\geq 0$).
\end{remark}

\section{Random unitary matrices and the zeta function}
\label{sec-examples}

We present now some applications of Theorem~\ref{th-2} (in particular,
proving Theorems~\ref{th-rmt} and~\ref{th-eulerproduct}).  We also
give an example that illustrates the limitations of such results,
suggesting strongly that one can not replace mod-Gaussian convergence
with the existence of the limits~(\ref{eq-converge}) only for $t$ in a
neighborhood of the origin.

\begin{example}\label{ex-keating-snaith}
  One of the canonical motivating examples of mod-Gaussian convergence
  is due to Keating and Snaith~\cite{keating-snaith}. Let
$$
X_N=\log \det(1-T_N),
$$
where $T_N$ is a Haar-distributed random unitary matrix in the compact
group $U(N)$; we view these random variables as $\Rr^2$-valued (via the
real and imaginary parts), so if $t=(t_1,t_2)$, we have
\begin{equation}\label{eq-inner-product}
t\cdot X_N=t_1\Reel(X_N)+t_2\Imag( X_N).
\end{equation}
\par
We first clarify the choice of the branch of logarithm: $X_N$ is
defined almost everywhere (when $1$ is not an eigenvalue of $T_N$),
and for $g\in U(N)$ with $\det(1-g)\not=0$, such that
$$
\det(1-Tg)=\prod_{1\leq j\leq N}{(1-\alpha_j T)},\quad\quad
|\alpha_j|=1,
$$
we define
$$
\log \det(1-g)=\lim_{\stacksum{r\ra 1}{r<1}}{\log \det(1-rg)}
=\sum_{1\leq j\leq N}\lim_{\stacksum{r\ra 1}{r<1}}{\log (1-r\alpha_j)},
$$
where the last logarithms are given by the Taylor expansion around
$0$. This is the same convention as in~\cite[par. after
(7)]{keating-snaith}. 
\par
Keating and Snaith show that $(X_N)$ satisfies
\begin{equation}\label{eq-ks}
\expect(e^{it\cdot X_N})=\prod_{1\leq j\leq N}{
\frac{\Gamma(j)\Gamma(j+it_1)}{\Gamma(j+\tfrac{1}{2}(it_1+t_2))
\Gamma(j+\tfrac{1}{2}(it_1-t_2))}
}
\end{equation}
for $t=(t_1,t_2)\in\Rr^2$ (note the asymmetry between $it_1$ and
$t_2$), see~\cite[eq. (71)]{keating-snaith}, taking into account a
slightly different normalization: their $t$ is our $it_1$ and their
$s$ is our $t_2$. It is also useful to observe that, in the connection
with T\"oplitz determinants (see~\cite{basor}), this characteristic
function is the $N$-th T\"oplitz determinant corresponding to the
symbol which is a pure Fisher-Hartwig singularity of type
$$
b(e^{i\theta})=
(2-2\cos\theta)^{it_1/2}\exp(i(\theta-\pi)t_2/2),\quad\quad
0<\theta<2\pi
$$
(this is denoted 
$$
t_{t_2/2}(e^{i\theta})u_{it_1/2}(e^{i\theta})=
\xi_{(it_1-t_2)/2}(e^{i\theta})\eta_{(it_1+t_2)/2}(e^{i\theta})
$$
in~\cite{e-s}, where the formula~(\ref{eq-ks}) is stated as Eq. (41);
see~\cite{bw} for two elementary computations of the corresponding
T\"oplitz determinants.)
\par
We rewrite this in terms of the Barnes function $G(z)$, as is
customary. We recall that $G$ is an entire function of order $2$, such
that $G(1)=1$ and $G(z+1)=\Gamma(z)G(z)$ for all $z$, and that its
zeros are located at the negative integers. In particular, it
satisfies
\begin{equation}\label{eq-ratio-gamma}
\prod_{j=1}^N{\Gamma(j+\theta)}=\frac{G(1+N+\theta)}{G(1+\theta)}
\end{equation}
for all $N\geq 1$ and $\theta\in\Cc$.
\par
Thus, we have
\begin{multline*}
\expect(e^{it\cdot X_N})=
\prod_{1\leq j\leq N}{
\frac{\Gamma(j)\Gamma(j+it_1)}{\Gamma(j+\tfrac{it_1+t_2}{2})
\Gamma(j+\tfrac{it_1-t_2}{2})}}=
\frac{G(1+\tfrac{it_1-t_2}{2})G(1+\tfrac{it_1+t_2}{2})}{G(1+it_1)}\\
\times
\frac{G(1+it_1+N)G(1+N)}{G(1+\tfrac{it_1-t_2}{2}+N)G(1+\tfrac{it_1+t_2}{2}+N)}
\end{multline*}
(see, e.g.,~\cite[eq.  (41)]{e-s}). We now get
from~\cite[Cor. 3.2]{e-s} that
\begin{align*}
\expect(e^{it\cdot X_N})&\sim N^{(it_1-t_2)(it_1+t_2)/4}
\frac{G(1+\tfrac{it_1-t_2}{2})G(1+\tfrac{it_1+t_2}{2})}{G(1+it_1)}
\\
&=\exp(-Q_N(t)/2)
\frac{G(1+\tfrac{it_1-t_2}{2})G(1+\tfrac{it_1+t_2}{2})}{G(1+it_1)},
\end{align*}
where 
$$
Q_N(t_1,t_2)=\delta_N(t_1^2+t_2^2),\quad\quad
\delta_N=\tfrac{1}{2}\log N,
$$ 
hence we have complex mod-Gaussian convergence with limiting function
\begin{equation}\label{eq-phig}
\Phi_g(t_1,t_2)=\frac{G(1+\tfrac{it_1-t_2}{2})
G(1+\tfrac{it_1+t_2}{2})}{G(1+it_1)}.
\end{equation}
\par
Here we can take $\mu=1$ in~(\ref{eq-grow-not-too-much}).

\begin{remark} Note that this is \emph{not} the product of the two
  individual limiting functions for mod-Gaussian convergence of the
  real and imaginary parts of $X_N$ separately (which are
  $\Phi_g(t_1,0)$ and $\Phi_g(0,t_2)$), although after normalizing,
  one obtains convergence in law of
$$
(\Reel(X_N)/\sqrt{\delta_N},\Imag(X_N)/\sqrt{\delta_N})
$$
to \emph{independent} standard Gaussian variables, as noted by Keating
and Snaith.
\end{remark}

We now check that Theorem~\ref{th-2} can be applied to the sequence of
random variables $(X_N)$. The fact that $\Phi_g$ is of class $C^1$ on
$\Rr^2$ is clear in view of the analytic properties of the Barnes
function. Condition (3) is also obvious.  A uniformity estimate
like~(\ref{eq-strong-convergence}) is not found
in~\cite{keating-snaith} or~\cite{e-s}, though it is proved for $t$ in
a fixed compact region of $\Rr^2$ in~\cite[Cor. 3.2]{e-s}. In
Proposition~\ref{pr-app} in the Appendix, we prove
$$
\Phi_N(t)=\Phi(t)e^{-Q_N(t)/2}\Bigl(1+
O\Bigl(\frac{1+\|t\|^3}{N}\Bigr)\Bigr)
$$
for $\|t\|\leq N^{1/6}$.  In view of $\sigma_N=(\log N)^2/4$, this is
compatible with~(\ref{eq-strong-convergence})
and~(\ref{eq-uniformity}), with $a$ arbitrarily large, $C=\demi$ and
$A$ (defined by~(\ref{eq-order-2})) can be taken to be any $A>2$. Thus
the constant $D$ in~(\ref{eq-quantitative}) can be any number
$$
D>2(3+\max(4,36))=78.
$$
\par
Moreover, since
$$
\tilde{Q}_N(x_1,x_2)=\frac{x_1^2+x_2^2}{\demi \log N},
$$
we see by using Remark~\ref{rm-lower-bound}
(namely,~(\ref{eq-quant-lower})) and~(\ref{eq-lower-easy}) that we
have the following corollary:

\begin{corollary}
  For $0<\eps<1$ we have
\begin{equation}\label{eq-dense-quant-log}
\proba(|X_N-z_0|<\eps)\gg \eps^2(\log N)^{-1}
\end{equation}
for all $N$ with
$$
N\gg \max\bigl\{\exp(|z_0|^2),\exp(C\eps^{-9})\bigr\}
$$
where both implied constants are absolute.
\end{corollary}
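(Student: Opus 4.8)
The plan is to feed the lower-bound form of Theorem~\ref{th-2} recorded in Remark~\ref{rm-lower-bound} into the elementary Gaussian estimate~(\ref{eq-lower-easy}), after first replacing the disc around $z_0$ by an inscribed axis-parallel box, since Theorem~\ref{th-2} is stated for boxes. Identifying $\Cc$ with $\Rr^2$ via $z\mapsto(\Reel z,\Imag z)$ and using $\|v\|_2\leq\sqrt 2\,\|v\|_\infty$ for $v\in\Rr^2$, the open box
$$
U=\{x\in\Rr^2\,\mid\,\|x-z_0\|_\infty<\eps/\sqrt 2\}
$$
is contained in $\{x\,\mid\,|x-z_0|<\eps\}$, and its half-width $\eps/\sqrt 2$ lies in $(0,1)$ because $0<\eps<1$. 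So it suffices to bound $\proba(X_N\in U)$ from below by $\gg\eps^2/\log N$.

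Next I would use the verification, carried out just above, that $(X_N)$ satisfies the hypotheses of Theorem~\ref{th-2} with $m=2$, with the convergence $\mu$-balanced for $\mu=1$ (in the sense of~(\ref{eq-grow-not-too-much})), with $C=\demi$, $a$ arbitrary, any exponent $A>2$, limiting function $\Phi_g$, and $\sigma_N=(\log N)^2/4$. I fix once and for all the value $\gamma=1/9>0$ in Remark~\ref{rm-lower-bound}; then~(\ref{eq-quant-lower}) gives
$$
\proba(X_N\in U)\geq\proba(G_N\in U_-)+O\Bigl(\sigma_N^{-(1/2+1/(2m\mu)-\gamma)}\Bigr)=\proba(G_N\in U_-)+O\bigl((\log N)^{-23/18}\bigr),
$$
where $U_-=\{x\,\mid\,\|x-z_0\|_\infty<\eps/(2\sqrt 2)\}$, and since $\gamma$, $m$, $a$, $\alpha$, $C$ and $\Phi_g$ are now all fixed absolute data, the implied constant is absolute. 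For the Gaussian term I would apply~(\ref{eq-lower-easy}) with $m=2$ and half-width $\eps/(2\sqrt 2)\leq 1$, using $\sqrt{\sigma_N}=\demi\log N$ and $\tilde Q_N(x)=(x_1^2+x_2^2)/(\demi\log N)$, to get
$$
\proba(G_N\in U_-)\gg\frac{\eps^2}{\log N}\exp\Bigl(-\frac{|z_0|^2}{\log N}\Bigr)
$$
with an absolute constant; hence requiring $N\geq\exp(|z_0|^2)$ forces the exponential to be $\geq e^{-1}$ and leaves $\proba(G_N\in U_-)\gg\eps^2/\log N$.

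The last step is to absorb the error term $(\log N)^{-23/18}=(\log N)^{-5/18}(\log N)^{-1}$: it is at most half of $\eps^2/\log N$ (up to the absolute constants above) as soon as $(\log N)^{5/18}\gg\eps^{-2}$, i.e. $\log N\gg\eps^{-36/5}$, and since $36/5<9$ while $\eps^{-36/5}\leq\eps^{-9}$ for $0<\eps<1$, the hypothesis $N\gg\exp(C\eps^{-9})$ with $C$ a sufficiently large absolute constant more than suffices. Combining the displays gives $\proba(|X_N-z_0|<\eps)\geq\proba(X_N\in U)\gg\eps^2/\log N$ for $N\gg\max\{\exp(|z_0|^2),\exp(C\eps^{-9})\}$. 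The only real work is bookkeeping: checking that no implied constant secretly depends on $z_0$ or $\eps$ once $\gamma$ is frozen, and the elementary exponent arithmetic ($1/2+1/4-1/9=23/36$, doubled to $23/18$, and $36/5<9$) that explains the shape $\exp(C\eps^{-9})$ of the threshold. There is no analytic obstacle beyond Theorem~\ref{th-2} itself.
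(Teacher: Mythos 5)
Your argument is correct and is essentially the one the paper intends: feed the lower-bound form \eqref{eq-quant-lower} from Remark~\ref{rm-lower-bound} into the elementary Gaussian estimate \eqref{eq-lower-easy}, using the already-verified data $m=2$, $\mu=1$, $C=1/2$, $a$ arbitrary, $A>2$ for $(X_N)$, and then balance the two conditions on $N$. The paper fixes $\gamma=1/36$ ``for definiteness'' while you fix $\gamma=1/9$ and, helpfully, spell out the reduction from the disc $\{|x-z_0|<\eps\}$ to an inscribed $\|\cdot\|_\infty$-box of half-width $\eps/\sqrt 2$; both choices of $\gamma$ are admissible and, as your exponent bookkeeping ($36/5<9$) shows, land comfortably inside the stated threshold $N\gg\exp(C\eps^{-9})$.
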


(The first condition on $N$ ensures the main term is $\gg \eps^2(\log
N)^{-1}$, while the second ensures that the error term is smaller; we
have taken $2/9=1/(2m\mu)-\gamma=1/4-1/36$ for definiteness; any
number $>8$ can replace $9$).
\par
Now we appeal to the following elementary lemma:

\begin{lemma}\label{lm-log}
Let $z_0\in\Cc^{\times}$ and $\eps>0$, and denote
$$
w_0=\log |z_0|+i\theta_0,\quad\quad \theta_0=\Arg(z_0)\in ]-\pi,\pi].
$$
\par
Then, provided $\eps\leq |z_0|$, we have
$$
|e^w-z_0|<\eps
$$
for all $w\in \Cc$ such that
$$
|w-w_0|<\frac{\eps}{2|z_0|}.
$$
\end{lemma}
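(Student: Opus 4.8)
The plan is to reduce everything to the elementary fact that $e^{w_0}=z_0$, together with the bound $|e^u-1|\le |u|e^{|u|}$ for $u\in\CC$. First I would note that the hypotheses on $w_0$ are engineered precisely so that
$e^{w_0}=e^{\log|z_0|}e^{i\theta_0}=|z_0|e^{i\theta_0}=z_0$;
this is the only place where the definition of $w_0$ — in particular the choice $\theta_0=\Arg(z_0)\in\,]-\pi,\pi]$, which guarantees $e^{i\theta_0}$ has the correct argument — enters the argument. Writing $u=w-w_0$ and factoring, one gets $e^w-z_0=e^{w_0}(e^{u}-1)=z_0(e^{u}-1)$, so the claim becomes: $|e^{u}-1|<\eps/|z_0|$ whenever $|u|<\eps/(2|z_0|)$.

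Next I would estimate $|e^u-1|$ directly from the power series: $|e^{u}-1|\le\sum_{n\ge 1}|u|^n/n!=e^{|u|}-1\le |u|e^{|u|}$, the last inequality since $1/n!\le 1/(n-1)!$ for $n\ge 1$ (or, equivalently, $e^x-1=\int_0^x e^t\,dt\le xe^x$ for $x\ge 0$). Now invoke the hypothesis $\eps\le|z_0|$: it forces $|u|<\eps/(2|z_0|)\le 1/2$, hence $e^{|u|}<e^{1/2}<2$. Combining, $|e^{u}-1|<2|u|<\eps/|z_0|$, and therefore $|e^{w}-z_0|=|z_0|\,|e^{u}-1|<\eps$, as required.

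There is essentially no obstacle here; the only point that needs a little care is the constant $2$ in the radius $\eps/(2|z_0|)$, which is exactly what the assumption $\eps\le|z_0|$ is there to supply (via $|u|<1/2$ and $e^{1/2}<2$). If desired one could isolate the inequality $|e^u-1|\le|u|e^{|u|}$ as a separate one-line remark, but it is short enough to keep inline in the proof.
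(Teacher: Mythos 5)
Your proof is correct, and since the paper states the lemma without proof (it is labelled ``elementary'' and left to the reader), there is nothing to compare against; your argument is a clean way to supply the missing details. The factorization $e^w-z_0=z_0(e^{u}-1)$ with $u=w-w_0$, the bound $|e^u-1|\le e^{|u|}-1\le |u|e^{|u|}$, and the observation that $\eps\le|z_0|$ forces $|u|<1/2$ so that $e^{|u|}<2$, fit together exactly as needed to absorb the factor of~$2$ in the radius.
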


For given $z_0\in \Cc$, non-zero, we get from this
and~(\ref{eq-dense-quant-log}), applied to $\log |z_0|+i\Arg(z_0)$ and
to $\eps/|z_0|$ instead of $\eps$, the following explicit form of
Theorem~\ref{th-rmt}:

\begin{theorem}\label{th-rmt-quant}
  Let $z_0\in \Cc^{\times}$ be arbitrary, $\eps>0$ such that $\eps\leq
  |z_0|$. We have
$$
  \proba(|\det(1-T_N)-z_0|<\eps)\gg 
\Bigl(\frac{\eps}{|z_0|}\Bigr)^2
\frac{1}{\log N}
$$
for $N\geq N_0(z_0,\eps)$, where
\begin{gather*}
  N_0(z_0,\eps)\ll \max\Bigl\{\exp\bigl((\log |z_0|)^2\bigr),
  \exp\Bigl(C\Bigl(\frac{\eps}{2|z_0|}\Bigr)^{-9}\Bigr)\Bigr\}
\end{gather*}
where $C$ and the implied constants are absolute.
\end{theorem}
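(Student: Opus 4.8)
The plan is to obtain Theorem~\ref{th-rmt-quant} as a direct consequence of the two preceding results: the Corollary recorded just above, i.e.\ inequality~(\ref{eq-dense-quant-log}) for the random variable $X_N=\log\det(1-T_N)$, and the elementary Lemma~\ref{lm-log}, which converts a small-disc estimate around $z_0$ in the multiplicative variable into a small-disc estimate around $\log z_0$ in the additive variable.

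First I would set $w_0=\log|z_0|+i\theta_0$ with $\theta_0=\Arg(z_0)\in\,]-\pi,\pi]$, and put $\eps'=\eps/(2|z_0|)$. Since $\eps\leq|z_0|$ by hypothesis, we have $0<\eps'\leq 1/2<1$. By Lemma~\ref{lm-log}, every $w\in\Cc$ with $|w-w_0|<\eps'$ satisfies $|e^w-z_0|<\eps$. Applying this with $w=X_N$ and using the identity $e^{X_N}=\det(1-T_N)$ (valid almost surely, whatever branch of the logarithm the definition of $X_N$ selects), we obtain the inclusion of events
$$
\{\,|X_N-w_0|<\eps'\,\}\subseteq\{\,|\det(1-T_N)-z_0|<\eps\,\},
$$
and hence $\proba(|\det(1-T_N)-z_0|<\eps)\geq \proba(|X_N-w_0|<\eps')$.

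Next I would invoke~(\ref{eq-dense-quant-log}) with $z_0$ replaced by $w_0$ and $\eps$ replaced by $\eps'$; this is legitimate precisely because $0<\eps'<1$. It yields
$$
\proba(|X_N-w_0|<\eps')\gg (\eps')^2(\log N)^{-1}=\tfrac14\Bigl(\tfrac{\eps}{|z_0|}\Bigr)^2(\log N)^{-1},
$$
valid as soon as $N\gg\max\{\exp(|w_0|^2),\exp(C(\eps')^{-9})\}$. Finally there is one line of bookkeeping: $|w_0|^2=(\log|z_0|)^2+\theta_0^2\leq (\log|z_0|)^2+\pi^2$, so $\exp(|w_0|^2)\ll\exp((\log|z_0|)^2)$ with an \emph{absolute} implied constant; and $(\eps')^{-9}=(\eps/(2|z_0|))^{-9}$. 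Combining, we recover exactly the asserted lower bound with $N_0(z_0,\eps)\ll\max\{\exp((\log|z_0|)^2),\exp(C(\eps/(2|z_0|))^{-9})\}$ and absolute constants throughout.

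I do not expect a genuine obstacle here, since the statement is a formal combination of Lemma~\ref{lm-log} and the Corollary. The only points demanding care are: (a) checking that re-centering from $z_0$ to $w_0$ introduces no non-absolute dependence — this works precisely because $|\theta_0|\leq\pi$ is absolutely bounded, so the cost $e^{\pi^2}$ of replacing $\exp(|w_0|^2)$ by $\exp((\log|z_0|)^2)$ is harmless; and (b) tracking the harmless factor $2$ coming from Lemma~\ref{lm-log}, which is absorbed in the main term but must be recorded faithfully inside the exponent defining $N_0(z_0,\eps)$.
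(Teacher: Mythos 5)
Your proposal is correct and follows exactly the paper's route: combining the quantitative bound~(\ref{eq-dense-quant-log}) for $X_N=\log\det(1-T_N)$ with Lemma~\ref{lm-log}, applied at $w_0=\log|z_0|+i\Arg(z_0)$ with radius $\eps/(2|z_0|)$. Your bookkeeping — that $0<\eps'\leq 1/2$ lets you invoke~(\ref{eq-dense-quant-log}), that $|w_0|^2\leq(\log|z_0|)^2+\pi^2$ keeps the constant absolute, and that the factor $2$ propagates into the $N_0$ bound — is precisely what the paper leaves implicit.
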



\begin{remark}
  It follows from asymptotic formulas for the Barnes function,
  e.g.~\cite{fl}, that 
  we have 
$$
\tfrac{1}{\|t\|^2}\log |\Phi_g(t)|\asymp \log (2\|t\|),
$$ 
which is illustrated in Figure~1. This super-gaussian behavior is the
main cause of difficulty in the proof of Theorem~\ref{th-2}.
\begin{figure}[ht]
\centering
\includegraphics[width=4in]{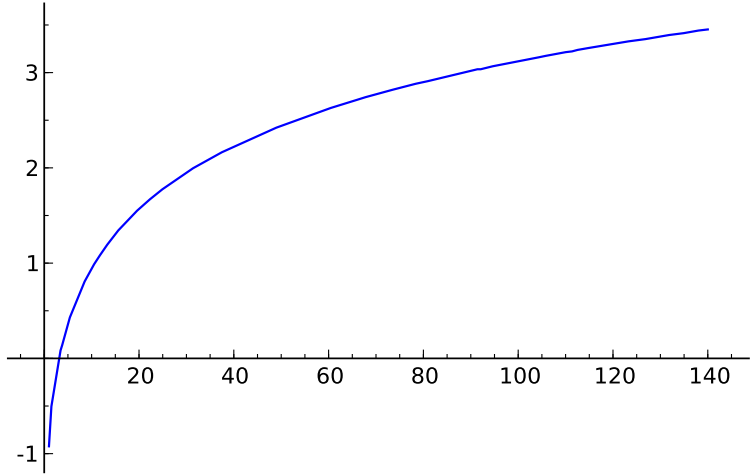}
\caption{Graph of $\tfrac{1}{t_2^2}\log |\Phi_{g}(1,t_2)|$, $1\leq
  t_2\leq 140$}
\end{figure}
\end{remark}


\begin{remark}
Note that if one only wants to say that $\det(1-T_N)$, for $N$
growing, has dense image in $\Cc$, much simpler topological arguments
suffice.
\end{remark}

\end{example}

\begin{example}
  Another conspicuous example of mod-Gaussian convergence is the
  arithmetic Euler factor in the moment conjecture for
  $\zeta(1/2+it)$.  In~\cite[\S 4.1]{jkn}, we considered this factor
  for the real part $\log|\zeta(\demi+it)|$ only, and we first
  generalize this as in the previous section.  
\par
Consider a sequence $(X_p)$ of independent random variables uniformly
distributed on the unit circle and indexed by prime numbers, and let
$$
L_N=-\sum_{p\leq N}{\log(1-p^{-1/2}X_p)}
$$
where the logarithm is given here by the Taylor expansion around
$0$. For each individual term $E_p=-\log(1-p^{-1/2}X_p)$, we have
$$
\expect(e^{it\cdot E_p})=
\frac{1}{2\pi}\int_0^{2\pi}{
(1-ae^{i\theta})^{-\tfrac{1}{2}(t_2+it_1)}
(1-ae^{-i\theta})^{\tfrac{1}{2}(t_2-it_1)}
d\theta}
$$
with $a=p^{-1/2}$. Expanding by the binomial theorem and picking up
the constant term in the expansion in Fourier series, we obtain
\begin{align*}
\expect(e^{it\cdot E_p})&=
\sum_{j\geq 0}{a^{2j}
\binom{-\tfrac{1}{2}(t_2+it_1)}{j}
\binom{\tfrac{1}{2}(t_2-it_1)}{j}
}\\
&=
\sum_{j\geq 0}{
\frac{(\tfrac{1}{2}(t_2+it_1))_j(\tfrac{1}{2}(it_1-t_2))_j}
{(j!)^2}a^{2j}
}\\
&=\hyperg{\tfrac{1}{2}(it_1+t_2)}{\tfrac{1}{2}(it_1-t_2)}{1}{a^2}
\end{align*}
in terms of the Gauss hypergeometric function.
\par
Arguing as in~\cite{jkn}, we see now that
\begin{align*}
\expect(e^{it\cdot L_N})&=\prod_{p\leq N}{
\hyperg{\tfrac{1}{2}(it_1+t_2)}{\tfrac{1}{2}(it_1-t_2)}{1}{p^{-1}}
}\\
&=\prod_{p\leq
  N}{\Bigl(1-\frac{t_1^2+t_2^2}{4}\frac{1}{p}+O\Bigl(\frac{1}{p^{2}}
\Bigr)\Bigr)},
\end{align*}
and hence, denoting
$$
\delta_N=-\frac{1}{2}\sum_{p\leq N}{\log (1-p^{-1})}\sim 
\frac{1}{2}\log\log N,\quad\quad
Q_N(t)=\delta_N (t_1^2+t_2^2)=\delta_N\|t\|^2,
$$
we get
$$
\expect(e^{it\cdot L_N})\sim \exp(-Q_N(t)/2)\Phi_a(t),
$$
as $N\ra +\infty$, with limiting function
\begin{equation}\label{eq-euler-factor}
\Phi_a(t)=\prod_{p}{\Bigl(1-\frac{1}{p}\Bigr)^{-\|t\|^2/4}
\hyperg{\tfrac{1}{2}(it_1+t_2)}{\tfrac{1}{2}(it_1-t_2)}{1}{p^{-1}}
}.
\end{equation}
\par
We have here also $\mu=1$ in~(\ref{eq-grow-not-too-much}). Now, to
check the uniformity required in~(\ref{eq-strong-convergence}), we
write
$$
\expect(e^{it\cdot L_N})=\exp(-Q_N(t)/2)\Phi_a(t)R_N(t)
$$
with
$$
R_N(t)=\prod_{p>N}{\Bigl(1-\frac{1}{p}\Bigr)^{-\|t\|^2/4}
  \hyperg{\tfrac{1}{2}(it_1+t_2)}{\tfrac{1}{2}(it_1-t_2)}{1}{p^{-1}}}.
$$
\par
If we expand the $p$-factor using the binomial theorem, we obtain
$$
1+\sum_{j\geq 2}{\frac{1}{p^j}
\sum_{a+b=j}{\frac{(\tfrac{1}{2}(it_1+t_2))_a(\tfrac{1}{2}(it_1-t_2))_a}
{(a!)^2}
\binom{-\|t\|^2/4}{b}}},
$$
and if we assume that $\|t\|\leq A$ with $A\geq 1$, crude bounds show
that this $p$-factor is
$$
1+O\Bigl(\sum_{j\geq 2}{
\frac{jA^{2j}}{p^j}
}\Bigr),
$$
where the implied constant is absolute, so that if $\|t\|\leq
N^{1/8}$, for instance, we get
$$
R_N(t)=\prod_{p>N}{\Bigl(1+O\Bigl(\sum_{j\geq
    2}{jp^{-3j/4}}\Bigr)\Bigr)}
=1+O(N^{-1/2}).
$$
\par
Although this is crude, it already gives much more
than~(\ref{eq-strong-convergence}), both in terms of range of
uniformity and sharpness of approximation.
\par
Since Condition (3) is also obviously valid here, Theorem~\ref{th-2}
(or rather~(\ref{eq-quant-lower})) applies with $A$ any real number
$>2$, $a$ and $C$ arbitrarily large, and shows that
$$
\proba(|L_N-z_0|<\eps)\gg \eps^2(\log\log N)^{-1}
$$
for any $z_0\in\Cc$ and $\eps<1$, provided
$$
N\gg \max\Bigl(\exp(\exp(|z_0|^2)),\exp(\exp(C\eps^{-9}))\Bigr)
$$
for some large constant $C\geq 1$.
\par
From this we deduce easily the more arithmetic-looking statement of
Theorem~\ref{th-eulerproduct}.  Indeed, let $P_N(t)$ be given
by~(\ref{eq-pnt}). For fixed $N$, it is well-known that the random
variables $t\mapsto P_{N}(t)$ on the probability spaces
$([0,T],T^{-1}\lambda)$ converge in law, as $T\ra +\infty$, to
$$
\tilde{P}_N=\prod_{p\leq N}{(1-p^{-1/2}X_p)^{-1}}=\exp(L_N),
$$
where $X_p$ are as above (independent and uniformly distributed on the
unit circle; the independence is due to the fundamental theorem of
arithmetic). For any open set $V$,\footnote{\ Because we do not know
  if the probability that $\tilde{P}_N$ is in the boundary of $V$ is
  zero or not, we do not claim -- or need to claim -- an equality;
  see, e.g.,~\cite[Th.  2.1, (iv)]{billingsley}.} it follows that
$$
\liminf_{T\ra +\infty}{\frac{1}{T}\lambda(\{t\leq T\,\mid\, P_N(t)\in
  V\})} \geq \proba(\tilde{P}_N\in V).
$$
\par
Applying Lemma~\ref{lm-log} as in the proof of Theorem~\ref{th-rmt-quant},
we obtain Theorem~\ref{th-eulerproduct} with
\begin{gather*}
N_0(z_0,\eps)\ll\max\Bigl\{\exp\Bigl(\exp\Bigl((\log |z_0|)^2\Bigr)\Bigr),
\exp\Bigl(\exp\Bigl(C\Bigl(\frac{\eps}{2|z_0|}\Bigr)^{-9}\Bigr)\Bigr)
\Bigr\}
\end{gather*}
for some absolute constant $C$.

\begin{remark}
  Again, the density of values of $P_N(t)$ for $N\geq 1$ and $t\in\Rr$
  (or of $\tilde{P}_N$, which amounts to the same thing) is an easier
  matter that can be dealt with using topological tools.
\end{remark}

\end{example}

\begin{example}\label{ex-zeta}
  The two previous examples are of course motivated by their
  conjectural relation with the behavior of the Riemann zeta function
  on the critical line (this is the arithmetic essence
  of~\cite{keating-snaith}). Indeed, Keating and Snaith conjecture
  that:

\begin{conjecture}\label{cj-ks}
Define $\log \zeta(1/2+iu)$, when $u\in\Rr$ is not the ordinate of a
non-trivial zero of $\zeta(s)$, by continuation along the horizontal
line $\Imag(s)=u$, with limit $0$ when $\Reel(s)\ra +\infty$.
\par
For any $t=(t_1,t_2)\in\Rr^2$, we have
$$
\frac{1}{T}\int_0^T{e^{it\cdot \log\zeta(1/2+iu)}du}=
\Phi_a(t)\Phi_g(t)\exp\Bigl(-\frac{t^2}{2}(\log\log T)\Bigr)(1+o(1))
$$
as $T\ra +\infty$, where $\cdot$ is the inner product on $\Rr^2$ as
in~\emph{(\ref{eq-inner-product})}.
\end{conjecture}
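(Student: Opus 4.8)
Conjecture~\ref{cj-ks} is the Keating--Snaith moment conjecture for $\zeta(1/2+iu)$, so a genuine proof is out of reach; what one can lay out is the structure a proof would have to follow, which is also the heuristic that makes the two preceding examples relevant. The natural line of attack is the hybrid Euler--Hadamard product of Gonek--Hughes--Keating: fix a cutoff $Y=Y(T)\to+\infty$ and write, for $u\in[0,T]$ outside a set of negligible measure,
\[
\log\zeta(\tfrac12+iu)=\log P_Y(u)+\log Z_Y(u)+o(1),
\]
where $\log P_Y(u)=-\sum_{p\le Y}\log(1-p^{-1/2-iu})$ is the logarithm of~(\ref{eq-pnt}) with $N=Y$ and $t=u$, and $\log Z_Y(u)=\sum_{\gamma}\psi\big((u-\gamma)\log Y\big)$ is a smoothed logarithmic linear statistic of the nontrivial zeros $\tfrac12+i\gamma$ within $O(1/\log Y)$ of height $u$. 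The conjecture would then follow from four ingredients: (a) transported to $([0,T],T^{-1}\lambda)$, the prime part has the statistics of the random model $L_Y=-\sum_{p\le Y}\log(1-p^{-1/2}X_p)$ of the Euler-product example, with the $X_p$ independent and uniform on the unit circle, so it converges mod-Gaussian with limiting function $\Phi_a$ of~(\ref{eq-euler-factor}); (b) the zero part has the statistics of $\log\det(1-T_N)$ with $T_N$ Haar-distributed on $U(N)$ and $N\asymp\log(T/2\pi)/\log Y\to+\infty$, so it converges mod-Gaussian with limiting function $\Phi_g$ of~(\ref{eq-phig}); (c) the two parts are asymptotically independent, so the covariances add and the limiting functions multiply; and (d) a normalization check that the two covariances sum to the Gaussian factor of the conjecture, consistently with Selberg's variance $\tfrac12\log\log T$ for $\log|\zeta(\tfrac12+iu)|$.

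Ingredient (a) is essentially what already underlies Theorem~\ref{th-eulerproduct}: for fixed $Y$ the map $u\mapsto P_Y(u)$ on $([0,T],T^{-1}\lambda)$ converges in law to $\tilde P_Y=\exp(L_Y)$, and $L_Y$ converges mod-Gaussian to $\Phi_a$. The real point is to let $Y=Y(T)$ grow slowly enough --- a fixed power of $\log T$, or at most $\exp(\sqrt{\log T})$ with some care --- that the short Dirichlet sums $\sum_{p\le Y}p^{-1/2-iu}$ and their fixed-degree powers can still be averaged over $[0,T]$ with a power-saving error, so that $\log P_Y$ really sees the independent-uniform model uniformly in the polynomially growing $t$-range demanded by~(\ref{eq-strong-convergence}). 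This I expect to be laborious but not conceptually serious.

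The essential obstacle is ingredient (b). One must show that the single, unbounded linear statistic $\log Z_Y(u)$ of the zeros has, as $u$ ranges over $[0,T]$, the distribution of $\log\det(1-T_N)$ --- down to the precise limiting function $\Phi_g$ of~(\ref{eq-phig}) and the precise covariance, and in the strong, polynomially uniform form that feeding Theorem~\ref{th-2} requires. This is the Montgomery--Odlyzko / GUE philosophy applied not to a fixed correlation but to a full logarithmic functional of the zeros; unconditionally even the pair correlation of the zeros is controlled only in a restricted range, and producing the mean values of size $\log\log T$ that generate the Gaussian normalization is far beyond current methods. This is exactly why Conjecture~\ref{cj-ks} is a conjecture, and I would not try to remove this barrier. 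The realistic first target is a \emph{conditional} theorem --- assuming the Riemann hypothesis together with a quantitative form of the $n$-level correlations (or of the ratios conjecture) --- under which (b) and (c) become accessible while (a) is unchanged.

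Unconditionally one can hope only for partial results: for $\|t\|$ in a fixed compact set, the circle of ideas of Soundararajan, Radziwi\l\l{} and Harper on moments and their lower-order terms already yields the leading asymptotics. As consistency checks on the statement one should verify the reduction at $t_2=0$ to the real-part mod-Gaussian result of~\cite{jkn}, which in particular recovers Selberg's central limit theorem, and the agreement --- after analytic continuation --- of the leading constant $\Phi_a(t)\Phi_g(t)$ with the conjectural $2k$-th moment constants, which is the original Keating--Snaith sanity check. Finally, the strong uniformity needed to feed Theorem~\ref{th-2}, and thus to obtain a \emph{quantitative} density statement, is considerably more than the bare $(1+o(1))$ of the conjecture; \cite{dkn} shows that this can be greatly relaxed if one only wants density of the values of $\zeta(\tfrac12+iu)$.
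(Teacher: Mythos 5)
You correctly recognize that Conjecture~\ref{cj-ks} is not a theorem of the paper but a statement of the Keating--Snaith moment conjecture, which the paper simply records (with attribution) as the input to Corollary~\ref{cor-zeta}; the paper contains no proof, so there is nothing internal to compare against. Your heuristic outline via the hybrid Euler--Hadamard decomposition, with the arithmetic piece matching the random Euler product ($\Phi_a$) and the zero piece matching the unitary characteristic polynomial ($\Phi_g$) plus an asymptotic-independence step, is the standard motivating picture and is consistent with the paper's own remark following the conjecture about the significance of the factored form $\Phi_a\Phi_g$; you are also right that what Corollary~\ref{cor-zeta} actually needs is a strengthened, polynomially uniform version of this, and that~\cite{dkn} is the relevant place where that uniformity demand is relaxed.
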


Hence, we see in particular that the following holds:

\begin{corollary}\label{cor-zeta}
  Assume there exist $\alpha>0$, $\delta>0$ and $\theta>0$ such that
  Conjecture~\emph{\ref{cj-ks}} holds with the error term $o(1)$
  replaced by
$$
\exp(-\alpha (\log\log T)^{\delta})
$$
uniformly for $\|t\|\leq (\log\log 6T)^{\theta}$. Then the set of
values $\zeta(1/2+it)$ is dense in the complex plane. In fact, there
exists $C>0$, $D\geq 0$, such that, for any $z_0\in \Cc^{\times}$ and
$\eps\leq |z_0|$, there exists $t$ with
$$
0\leq t\ll \max\Bigl\{\exp\bigl(\exp\bigl((\log |z_0|)^2\bigr)\bigr),
\exp\bigl(\exp\Bigl(C\Bigl(\frac{\eps}{2|z_0|}\Bigr)^{-D}
\Bigr)\bigr)\Bigr\},
$$
such that
$$
|\zeta(\demi + it)-z_0|<\eps.
$$
\end{corollary}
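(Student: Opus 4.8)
The plan is to apply Theorem~\ref{th-2} to the $\RR^2$-valued random variable $X_T=\log\zeta(1/2+iu)$, with $\Cc$ identified with $\RR^2$ via real and imaginary parts and the inner product as in~(\ref{eq-inner-product}), where $u$ is uniformly distributed on $[0,T]$; one may restrict $T$ to positive integers, or simply observe that the proof of Theorem~\ref{th-2} nowhere uses integrality of the index. With this identification the assumed quantitative form of Conjecture~\ref{cj-ks} says exactly that $(X_T)$ converges in the mod-Gaussian sense, as $T\to+\infty$, with covariance $Q_T(t)=\demi(\log\log T)\|t\|^2$ and limiting function $\Phi=\Phi_a\Phi_g$, the product of the functions of Example~\ref{ex-keating-snaith} and of the Euler-product example. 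Here $\delta_{1,T}=\delta_{2,T}=\demi\log\log T$, so the convergence is $1$-balanced, and $\sigma_T=\tfrac{1}{4}(\log\log T)^2\to+\infty$ (in particular $\geq 1$ for $T$ large).

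First I would check the three hypotheses of Theorem~\ref{th-2}. Condition~(1) is the assumed uniform version of Conjecture~\ref{cj-ks}: since $\sigma_T\asymp(\log\log 6T)^2$, the range $\|t\|\leq(\log\log 6T)^\theta$ contains $\|t\|\leq\sigma_T^a$ with $a=\theta/2$, and the error $\exp(-\alpha(\log\log T)^\delta)$ has the shape $\exp(-\alpha'\sigma_T^{C})$ with $C=\delta/2$. Condition~(2) holds because $\Phi_g$ and $\Phi_a$ are $C^1$ (indeed analytic) near the origin, by the properties of the Barnes function and the convergence of the Euler product. Condition~(3) holds with any $A>2$: the Barnes-function asymptotics give $\log|\Phi_g(t)|\asymp\|t\|^2\log(2\|t\|)$ (as noted in Example~\ref{ex-keating-snaith}), so $|\Phi_g(t)|\ll\exp(\beta\|t\|^A)$ for every $A>2$, and the extra factor $\Phi_a$ obeys at least as good a bound, Condition~(3) being elementary for the Euler-product example. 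Hence Theorem~\ref{th-2} applies, with a finite constant $D$ determined by $m=2$, $\mu=1$, $a=\theta/2$, $C=\delta/2$ and $A$.

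Next I would extract the location-sensitive lower bound by combining the lower-bound version~(\ref{eq-quant-lower}) with the elementary inequality~(\ref{eq-lower-easy}). With $m=2$, $\mu=1$ and $\tilde Q_T(x_0)=2\|x_0\|^2/(\log\log T)$, this gives
$$
\proba(X_T\in U)\gg\frac{\eps^2}{\log\log T}
$$
for a box $U$ of width $\eps\leq 1$ centred at $x_0$, provided $\log\log T\gg\|x_0\|^2$ (so the main term dominates its exponential factor) and $\log\log T\gg\eps^{-D'}$ for a suitable $D'$ — any $D'>8$ works — so that the error term of~(\ref{eq-quant-lower}) is absorbed. Unravelling, both conditions hold once $T\gg\exp(\exp((\log|z_0|)^2))$ and $T\gg\exp(\exp(C\eps^{-D}))$ for an appropriate absolute $D$, exactly as in the Euler-product example.

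To conclude I would argue as in the proofs of Theorems~\ref{th-rmt-quant} and~\ref{th-eulerproduct}: given $z_0\in\Cc^\times$ and $0<\eps\leq|z_0|$, set $w_0=\log|z_0|+i\Arg(z_0)$, so $\|w_0\|^2\ll(\log|z_0|)^2+1$, and apply the above to the box centred at $w_0$ of width a fixed multiple of $\eps/|z_0|$; by Lemma~\ref{lm-log}, $\log\zeta(1/2+iu)\in U$ forces $|\zeta(1/2+iu)-z_0|<\eps$. For $T$ beyond $\max\{\exp(\exp((\log|z_0|)^2)),\,\exp(\exp(C(\eps/2|z_0|)^{-D}))\}$ we then have $\proba(X_T\in U)>0$, and since $X_T$ is defined off the countable (hence Lebesgue-null) set of ordinates of zeros of $\zeta$, this produces a positive-measure set of $u\in[0,T]$, and in particular at least one $u$, with $|\zeta(\demi+iu)-z_0|<\eps$; letting $z_0,\eps$ vary gives the density. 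The real content sits entirely in Theorem~\ref{th-2}, and the only delicate point — the reason $D$ must be taken large — is the super-Gaussian growth of $\Phi_g$ flagged in the introduction, which is harmless for the qualitative and moderately quantitative conclusion sought here.
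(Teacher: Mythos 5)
Your proposal is correct and reconstructs faithfully the argument the paper leaves implicit: the corollary is stated immediately after Conjecture~\ref{cj-ks} with the words ``Hence, we see in particular that the following holds,'' the point being that the hypothesized uniform version of the conjecture is precisely the strong mod-Gaussian convergence hypothesis~(\ref{eq-strong-convergence}) for the two-dimensional random variable $u\mapsto\log\zeta(1/2+iu)$ on $[0,T]$, with covariance $\asymp(\log\log T)\|t\|^2$ and limiting function $\Phi_a\Phi_g$. You then apply Theorem~\ref{th-2} (via~(\ref{eq-quant-lower}) and~(\ref{eq-lower-easy})) and transfer to $\zeta$ by Lemma~\ref{lm-log}, exactly as in the proofs of Theorems~\ref{th-rmt-quant} and~\ref{th-eulerproduct}; your attention to the continuity of the index $T$ and to $X_T$ being defined off the null set of zero ordinates are the right remarks, and the numerology matches the corollary's shape. (One small notational wrinkle: the paper writes $\exp(-\tfrac{t^2}{2}\log\log T)$, while the coefficient in your $Q_T$ differs by a bounded factor, but this is an ambiguity in the conjecture's notation that does not affect the order of magnitude or any step of the argument.)
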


Of course, such a strong conjecture concerning the imaginary moments
of $\zeta(1/2+it)$ looks quite hopeless at the current time: there is
no known non-trivial result available, even assuming the Riemann
Hypothesis. But Example~\ref{ex-eigenvalue-count} below suggests that
(with this approach) it is indeed necessary to require that the
characteristic function converge uniformly for $t$ in a region growing
with $T$. In~\cite{dkn}, jointly with F. Delbaen, we will explain how
the weaker qualitative statement
$$
\frac{1}{T}\lambda(\{u\in [0,T]\,\mid\, \zeta(\demi+iu)\in V\})\gg 
\frac{1}{\log\log T}
$$
for a fixed open set $V$ and for $T$ large enough (which of course
suffices to give a positive answer to Ramachandra's question) can be
proved under much weaker assumptions than a uniform version of
Conjecture~\ref{cj-ks}.
\par
\begin{rem}
  Another remark concerning Conjecture~\ref{cj-ks} has to do with the
  factored form of the limiting function $\Phi_a(t)\Phi_g(t)$, which
  seems to imply some asymptotic independence property. Recall that
  the real and imaginary parts of $\Phi_a$ and $\Phi_g$ are themselves
  asymptotically independent \emph{after renormalization}, but are not
  products of the limiting functions for the two parts separately. So
  Conjecture~\ref{cj-ks}, if correct, is evidence of quite particular
  probabilistic behavior.\footnote{\ The fourth moment of
    $\zeta(1/2+it)$ and a few other results do provide evidence of a
    factored limiting function, with ``random matrix'' term split from
    the Euler factor. The mod-Poisson analogy is also consistent with
    this, in the case of the number of prime factors of an integer, as
    discussed in detail in~\cite[\S 4, 5, 6]{mod-poisson}.}
\end{rem}

\end{example}

\begin{example}\label{ex-eigenvalue-count}
  Our assumptions in Theorem~\ref{th-2} are probably not optimal. We
  now describe an illuminating (counter)-example in the direction of
  understanding when a result like this could be true.
  \par
We again look at random matrices $T_N$ in the compact group $U(N)$ (as
in Example~\ref{ex-keating-snaith}), but this time we consider the
random variables counting the number of eigenvalues in certain fixed
arcs of the unit circle: fix $\gamma\in ]0,1/2[$, and let
$$
I=\{e^{2i\pi\theta}\,\mid\, |\theta|\leq \gamma\}\subset \Cc.
$$
\par
Then let $X_N$ be the number of eigenvalues $\vartheta$ of $T_N$ such
that $\vartheta\in I$. Note that $X_N$ is an integer-valued random
variable. It was proved by Costin and Lebowitz that 
$$
\frac{X_N-2\gamma N}{\pi^{-1}\sqrt{\log N}}
$$
converges in law to a standard normal random variable.
Wieand~\cite{wieand} gave a proof\footnote{\ Including a more general
  result concerning the joint distribution of the number of
  eigenvalues in more than one interval.} based on asymptotics of
T\"oplitz determinants with discontinuous symbols; as noted by Basor,
this gives the asymptotic
$$
\expect(e^{it(X_N-2\gamma N)})\sim 
\exp\Bigl(-\frac{t^2}{2}\frac{1}{\pi^2}\log N\Bigr)
(2-2\cos
4\pi\gamma)^{\tfrac{t^2}{4\pi^2}}G\Bigl(1-\frac{t}{2\pi}\Bigr)
G\Bigl(1+\frac{t}{2\pi}\Bigr)
$$
as $N\ra +\infty$, for all $t$ \emph{with} $|t|<\pi$ (see,
e.g.,~\cite[Th. 5.47]{e-s}, applied with $N=2$, $\alpha_1=\alpha_2=0$,
$\beta_1=\tfrac{t}{2\pi}$, $\beta_2=-\tfrac{t}{2\pi}$, and the
condition on $t$ is equation (5.79) in loc. cit., or~\cite[p.
331]{basor}). This asymptotic is of course of the
form~(\ref{eq-converge}) for these values of $t$, but the restriction
$|t|<\pi$ is necessary, since the characteristic function of $X_N$ is
$2\pi$-periodic for all $N$. The convergence is sufficiently uniform
for $t$ close to $0$ to allow the deduction of the renormalized normal
behavior (as Wieand did, using the Laplace transform instead of the
characteristic function), but when $\gamma$ is rational, the set of
possible values of $X_N-2\gamma N$ for $N\geq 1$ is a discrete set in
$\Rr$.\footnote{\ For what it's worth, one may mention that the
  density of values $X_N-2\gamma N$ is true for irrational $\gamma$,
  by Dirichlet's approximation theorem, and by the existence of
  matrices in $U(N)$ where the number of eigenvalues in $I$ takes any
  value between $0$ and $N$.}
\end{example}

\section{Distribution of central values of $L$-functions over finite
  fields}
\label{sec-ff}

We now consider examples related to $L$-functions over finite
fields. Our main input will be deep results of Deligne and Katz, and
we are of course motivated by the philosophy of Katz and
Sarnak~\cite{katzsarnak}.
\par
The goal is to make statements about the distribution of values at the
central point $1/2$ of $L$-functions over finite fields. The appealing
aspect is that these form discrete sets, hence proving that they are
dense in $\Cc$ (as in Theorem~\ref{th-ff}), for instance, is obviously
interesting and meaningful.  We consider examples of our results for
the three basic symmetry types in turn: unitary, symplectic, and
orthogonal. For the last two, this means first obtaining a suitable
analogue of Example~\ref{ex-keating-snaith}. The corresponding
limiting functions have already been studied in some respect by
Keating-Snaith~\cite{ks-l-functions} and
Conrey-Farmer~\cite{conrey-farmer}, though our expressions seem
somewhat more natural.

\subsection{Unitary symmetry}
\label{ssec-unitary}

Let $\Fp_q$ be a finite field with $q$ elements. Unitary symmetry
arises (among other cases) for certain types of one-variable
exponential sums over finite fields, which are associated to Dirichlet
characters of $\Fp_q[X]$, which we now describe; these will lead to a
proof of Theorem~\ref{th-ff}.
\par
Let $\Fp_q$ be a finite field with $q$ elements of characteristic
$p\not=0$. A \emph{Dirichlet character} modulo $g\in\Fp_q[X]$ is a map
$$
\eta\,:\, \Fp_q[X]\ra \Cc,
$$
defined by
$$
\eta(f)=
\begin{cases}
0&\text{ if   $f$ and $g$ are not coprime}\\
\underline{\eta}(f)&\text{ otherwise},
\end{cases}
$$
where $\underline{\eta}$ is a group homomorphism
$$
\underline{\eta}\,:\, (\Fp_q[X]/g\Fp_q[X])^{\times}\lra \Cc^{\times}.
$$
\par
This character is non-trivial if $\underline{\eta}\not=1$, and primitive if it
can not be defined (in the obvious way) modulo a proper divisor of
$g$. The associated $L$-function is defined by the Euler product
$$
L(s,\eta)=\prod_{\pi}{(1-\eta(\pi)|\pi|^{-s})^{-1}},
$$
for $s\in\Cc$, where the product ranges over monic irreducible
polynomials in $\Fp_q[X]$ and $|\pi|=q^{\deg(\pi)}$. One shows quite
easily that this is in fact a polynomial (which we denote $Z(\eta,T)$)
in the variable $T=q^{-s}$ of degree $\deg(g)-1$, if $\eta$ is
primitive modulo $g$ and non-trivial.
\par
The examples used in proving Theorem~\ref{th-ff} arise from the
following well-known construction. For any integer $d\geq 1$, with
$p\nmid d$, any non-trivial multiplicative character
$$
\chi\,:\,\Fp_q\ra \Cc^{\times}
$$ 
such that $\chi^d\not=1$, and any squarefree polynomial $g\in
\Fp_q[X]$ of degree $d$, we let
$$
S(\chi,g)=\sum_{x\in\Fp_q}{\chi(g(x))},
$$
where $\chi(0)$ is defined to be $0$. These are multiplicative
exponential sums, and have been studied intensively, due in part to
their many applications to analytic number theory (for their
generalizations to multiple variables, see the
paper~\cite{katz-nonsing} of Katz).
\par
It is also well-known that one can construct a non-trivial Dirichlet
character $\eta=\eta(g,\chi)$, primitive modulo $g$, such that
$$
Z(\eta,T)=\exp\Bigl(\sum_{m\geq 1}{\frac{S_m(\chi,g)}{m}T^m}\Bigr),
$$
where $S_m(\chi,g)$ denotes the ``companion'' sums over extensions of
$\Fp_q$, namely
$$
S_m(\chi,g)=
\sum_{x\in \Fp_{q^m}}{
\chi(N_{\Fp_{q^m}/\Fp_q}(g(x)))},
$$
where $N_{\Fp_{q^m}/\Fp_q}$ is the norm map. We will denote
$L(s,g,\chi)$ the corresponding $L$-function.
\par
Moreover, we have the Riemann Hypothesis for these $L$-functions (due
to A. Weil), which gives the link with random unitary matrices: there
exists a unique conjugacy class $\theta_{\chi,g}(q)$ in the unitary
group $U(d-1)$ such that
$$
L(s+\demi,g,\chi)=\det(1-q^{-s}\theta_{\chi,g}(q)),
$$
(so that, in particular, we recover the Weil bound
$$
|S(\chi,g)|\leq (d-1)q^{1/2},
$$
by looking at the trace of $\theta_{\chi,g}(q)$).  For all this, one
can see, for instance,~\cite[\S 4.2]{kow-expsums}, which contains a
self-contained account.
\par
We will first prove the following theorem, which is clearly a stronger
form of Theorem~\ref{th-ff} in view of the preceding remarks:

\begin{theorem}\label{th-ff-strong}
  For $d\geq 1$ and $t\in\Zz$, let $g_{d,t}=X^d-dX-t\in\Zz[X]$. For
  $p$ prime, let $X(p)$ denote the set of pairs $(\chi,t)$ where
  $\chi$ is non-trivial character of $\Fp_p$ and $t\in\Fp_p$.
\par
Let $z_0\in\Cc^{\times}$ and $\eps>0$ with $\eps\leq |z_0|$ be
given. For all integers $d\geq d_0(z_0,\eps)$, we have
$$
\liminf_{p\ra +\infty}\frac{|\{(\chi,t)\in X(p)\,\mid\, \chi^d\not=1,
  |L(\demi,g_{d,t},\chi)-z_0|<\eps\}|}{|X(p)|} \gg
\Bigl(\frac{\eps}{|z_0|}\Bigr)^2 \frac{1}{\log d},
$$
where
\begin{gather*}
d_0(z_0,\eps)\ll \max\Bigl\{ \exp\bigl((\log |z_0|)^2\bigr),
\exp\Bigl(C\Bigl(\frac{\eps}{|z_0|}\Bigr)^{-9}\Bigr)\Bigr\},
\end{gather*}
where $C\geq 0$ and the implied constants are absolute.
\end{theorem}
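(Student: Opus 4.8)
The plan is to reduce Theorem~\ref{th-ff-strong} to an application of Theorem~\ref{th-2} (or rather its lower-bound form~(\ref{eq-quant-lower})), exactly as was done for random unitary matrices in Example~\ref{ex-keating-snaith}, with the family of $L$-functions $L(\demi,g_{d,t},\chi)$ playing the role of $\det(1-T_N)$ and with the averaging over $(\chi,t)\in X(p)$, followed by $p\ra+\infty$, playing the role of Haar measure. The key point is an equidistribution statement: for fixed $d$, as $p\ra+\infty$, the conjugacy classes $\theta_{\chi,g_{d,t}}(q)$ become equidistributed in $U(d-1)$ with respect to Haar measure. This is where the deep input of Deligne and Katz enters — one needs that the relevant sheaf (the Kloosterman-type or hypergeometric-type sheaf attached to the sums $S_m(\chi,g_{d,t})$, twisted appropriately) has geometric monodromy group equal to the full unitary group $U(d-1)$, so that the Chebotarev/Deligne equidistribution theorem applies. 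The specific shape of $g_{d,t}=X^d-dX-t$ is chosen so that Katz's results on monodromy of such one-parameter families (big monodromy) can be invoked; I would cite the appropriate statement from Katz's work (or from~\cite{kow-expsums}) rather than reprove it.

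First I would fix $d$ and record the equidistribution: for any continuous class function $\varphi$ on $U(d-1)$,
$$
\lim_{p\ra+\infty}\frac{1}{|X(p)|}\sum_{\substack{(\chi,t)\in X(p)\\ \chi^d\neq 1}}\varphi(\theta_{\chi,g_{d,t}}(p)) = \int_{U(d-1)}\varphi(g)\,d\mu_{d-1}(g),
$$
with the excluded pairs $\chi^d=1$ being negligible in the limit. Applying this to $\varphi(g)=e^{it\cdot\log\det(1-g)}$ (which is bounded, defined a.e., and for which the boundary/exceptional set has Haar measure zero), and using $L(\demi,g_{d,t},\chi)=\det(1-\theta_{\chi,g_{d,t}}(p))$, I get that the averaged characteristic function over $X(p)$ converges as $p\ra+\infty$ to $\E(e^{it\cdot\log\det(1-T_{d-1})})$, the Keating-Snaith characteristic function computed in Example~\ref{ex-keating-snaith}. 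Hence the pushforward measures converge weakly, and for any open set $V$ one has
$$
\liminf_{p\ra+\infty}\frac{|\{(\chi,t)\in X(p)\,\mid\,\chi^d\neq 1,\ L(\demi,g_{d,t},\chi)\in V\}|}{|X(p)|}\geq \proba(\det(1-T_{d-1})\in V)
$$
by the portmanteau theorem (as in the footnote to Theorem~\ref{th-eulerproduct}).

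Next I would quote the conclusion of Example~\ref{ex-keating-snaith} — concretely Theorem~\ref{th-rmt-quant} with $N=d-1$ — which gives, via Lemma~\ref{lm-log} applied to $w_0=\log|z_0|+i\Arg(z_0)$ and to $\eps/|z_0|$ in place of $\eps$, the bound
$$
\proba(|\det(1-T_{d-1})-z_0|<\eps)\gg\Bigl(\frac{\eps}{|z_0|}\Bigr)^2\frac{1}{\log d}
$$
for $d-1\geq N_0(z_0,\eps)$, where $N_0(z_0,\eps)\ll\max\{\exp((\log|z_0|)^2),\exp(C(\eps/2|z_0|)^{-9})\}$, hence for $d\geq d_0(z_0,\eps)$ with the stated (slightly adjusted, still absolute) bound on $d_0$. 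Combining the last display with the weak-convergence lower bound and taking $V=\{z:|z-z_0|<\eps\}$ yields Theorem~\ref{th-ff-strong}.

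The main obstacle is the equidistribution/big-monodromy step: one must verify that the family $g_{d,t}=X^d-dX-t$ is exactly of the type for which the geometric monodromy group of the associated local system of exponential sums is all of $U(d-1)$ (equivalently $GL_{d-1}$ or $SL_{d-1}$ after accounting for determinant twists), uniformly enough that Deligne's equidistribution theorem delivers the $p\ra+\infty$ limit with the excluded locus $\chi^d=1$ and the degenerate $g_{d,t}$ having density zero. Everything else is bookkeeping: matching normalizations between $\det(1-\theta)$ and the Keating-Snaith model, checking that the boundary of the disc $\{|z-z_0|<\eps\}$ is Haar-null for the pushforward of $\mu_{d-1}$, and tracking the explicit constant $d_0(z_0,\eps)$ through Lemma~\ref{lm-log} and Theorem~\ref{th-rmt-quant}.
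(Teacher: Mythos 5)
Your proposal follows the paper's proof exactly: equidistribution of the classes $\theta_{\chi,g_{d,t}}(p)$ in $U(d-1)^{\sharp}$ as $p\ra+\infty$ (the paper's Theorem~\ref{th-katz}), the portmanteau-type inequality for the open disc, and then Theorem~\ref{th-rmt-quant} applied with $N=d-1$ via Lemma~\ref{lm-log}. The one caveat in your equidistribution step is that no single family here has geometric monodromy with maximal compact $U(d-1)$ — for fixed $\chi$ Katz's result gives only $GL_{k}(d-1)=\{g\mid\det(g)^k=1\}$ with $k$ the order of $\chi\chi_2$, and standard families always have semisimple monodromy, so the statement you propose to cite does not exist as such; the paper obtains equidistribution in all of $U(d-1)$ precisely by averaging over the non-trivial characters $\chi$ in the Weyl criterion (discarding the $O(d)$ characters with $\chi^{2d}=1$), which is the extra argument you would need to supply rather than quote.
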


This result depends on the mod-Gaussian convergence for characteristic
polynomials on $U(N)$ (i.e., on
Example~\ref{ex-keating-snaith}). Indeed, denoting by $U(N)^{\sharp}$
the space of conjugacy classes in $U(N)$, we have the following:

\begin{theorem}\label{th-katz}
  For any integer $d>5$, any odd prime $p$ with $p\nmid d(d-1)$, the
  conjugacy classes
$$
\{\theta_{\chi,g_{d,t}}(p)\,\mid\, \chi\mods{p},\ \chi\not=1,\text{ and }
t\in\Fp_p\text{ with } t^{d-1}-(1-d)^{d-1}\not=0\mods{p}\}
$$ 
become equidistributed in $U(d-1)^{\sharp}$ as $p\ra +\infty$, with
respect to Haar measure.
\end{theorem}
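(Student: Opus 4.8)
The plan is to realize the classes $\theta_{\chi,g_{d,t}}(p)$ as unitarized Frobenius conjugacy classes of a lisse $\overline{\QQ}_\ell$-sheaf on a parameter variety over $\Fp_p$, compute its geometric monodromy group, and apply Deligne's equidistribution theorem. Fix a multiplicative character $\chi$ of $\Fp_p^\times$ of order exceeding a bound $m_0(d)$ (in particular $\chi^d\neq1$). Let $\mathcal{L}_\chi$ be the Kummer sheaf on $\mathbf{G}_m$, let $G\colon\mathbf{A}^1_x\times\mathbf{A}^1_t\to\mathbf{A}^1$ be $(x,t)\mapsto x^d-dx-t$, and let $U\subset\mathbf{A}^1_t$ be the open set where $g_{d,t}$ is squarefree, i.e. $t^{d-1}\neq(1-d)^{d-1}$. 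On $U$ the sheaf $\mathcal{F}_\chi=R^1(\mathrm{pr}_t)_{!}\,G^*\mathcal{L}_\chi$ is lisse of rank $d-1$ (the Euler--Poincar\'e formula, using $\chi\neq1$ at the $d$ roots and $\chi^d\neq1$ at $\infty_x$), pure of weight $1$ (the $x$-line sheaf is its own middle extension, since all its local monodromies are nontrivial), and after a Tate twist it is pure of weight $0$ with Frobenius conjugacy class at $t\in U(\Fp_p)$ equal to $\theta_{\chi,g_{d,t}}(p)$, with eigenvalues on the unit circle. So everything reduces to proving that $G_{\mathrm{geom}}(\mathcal{F}_\chi)=GL_{d-1}$, with the rank and conductor of $\mathcal{F}_\chi$ bounded in terms of $d$ only, not of $\chi$ or $p$.

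The monodromy computation uses the hypotheses $p\nmid d(d-1)$ and $d>5$. Since $p\nmid d(d-1)$, the polynomial $x^d-dx$ is a Morse function whose $d-1$ critical values $(1-d)\zeta$ ($\zeta^{d-1}=1$) are distinct; hence at each of the $d-1$ missing points of $U$ the fibre $g_{d,t_0}$ acquires a single simple double root, so the local monodromy of $\mathcal{F}_\chi$ there is a pseudoreflection (Picard--Lefschetz), and its nontrivial eigenvalue is a value of $\chi$, which is $\neq1$ and of order $>2$ once $\mathrm{ord}(\chi)$ is large. One then checks that $\mathcal{F}_\chi$ is geometrically irreducible (a middle extension built from a rank-one sheaf, with no global or $I_\infty$-invariants or coinvariants --- a standard criterion of Katz for such exponential-sum families), that it is wildly ramified at $t=\infty$ (the $d$ roots of $g_{d,t}$ confluence at $\infty$ as $t\to\infty$, forcing positive Swan conductor there and in particular showing $G_{\mathrm{geom}}$ is not finite), and that it is not geometrically induced. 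The classification of irreducible subgroups of $GL_n$ containing a non-unipotent pseudoreflection of order $>2$ that are neither finite nor induced (Kazhdan--Margulis / Zalesskii, as packaged in Katz's \emph{Exponential Sums and Differential Equations}) then leaves only $G_{\mathrm{geom}}^\circ\supseteq SL_{d-1}$ once $n=d-1\geq5$. To upgrade $SL_{d-1}$ to $GL_{d-1}$ one analyses the rank-one sheaf $\det\mathcal{F}_\chi$: it is tamely ramified at the $d-1$ finite bad points, with local monodromies the pseudoreflection eigenvalues, and one computes that it is wildly ramified at $\infty$ --- equivalently, that the global root numbers of $L(s,g_{d,t},\chi)$ (Gauss-sum-type quantities) genuinely oscillate with $t$ rather than staying in a tame family. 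This gives $G_{\mathrm{geom}}(\mathcal{F}_\chi)=G_{\mathrm{arith}}(\mathcal{F}_\chi)=GL_{d-1}$.

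Granting this, Deligne's equidistribution theorem in its effective form (Katz--Sarnak, \emph{Random matrices, Frobenius eigenvalues and monodromy}) gives, for each such $\chi$, that the classes $\theta_{\chi,g_{d,t}}(p)$, $t\in U(\Fp_p)$, become equidistributed in $U(d-1)^{\sharp}$ for Haar measure as $p\to+\infty$, with discrepancy $O_d(p^{-1/2})$ in which the implied constant depends only on the rank and conductor of $\mathcal{F}_\chi$, hence only on $d$. Summing this over the $\asymp p$ characters $\chi$ of order $>m_0(d)$, and noting that the pairs $(\chi,t)$ thereby omitted --- those with $\mathrm{ord}(\chi)\leq m_0(d)$ ($O_d(1)$ characters) or with $t^{d-1}=(1-d)^{d-1}$ ($O_d(1)$ values of $t$) --- number only $O_d(p)$ out of $\asymp p^2$, we obtain equidistribution of the full family $\{\theta_{\chi,g_{d,t}}(p)\}$ in $U(d-1)^{\sharp}$. (Alternatively one assembles all $\chi$ into a single sheaf on $\mathbf{G}_m\times U$ via the Mellin transform, under which this family is nearly self-similar, and applies equidistribution on that two-dimensional base directly.)

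The crux is the monodromy computation of the second paragraph. The genuinely hard points are: ruling out the finitely many exceptional finite and imprimitive configurations that an irreducible pseudoreflection group can be --- this is exactly why one needs $d-1\geq5$, and why the wild ramification at $\infty$ (excluding finiteness) and a primitivity check are needed; and pinning down $\det\mathcal{F}_\chi$ precisely enough to see that it is wildly ramified, which is what distinguishes equidistribution in $U(d-1)$ from equidistribution in the smaller $SU(d-1)$. A minor point to keep in mind is uniformity: all ramification and conductor estimates must be taken independent of $\chi$, and the small-order characters $\chi$ (for which a pseudoreflection may be unipotent or of order $2$, the monodromy could drop to $Sp_{d-1}$ or $SO_{d-1}$, or $\mathcal{F}_\chi$ may fail to be pure of weight $1$) must simply be discarded --- harmlessly, since there are only $O_d(1)$ of them.
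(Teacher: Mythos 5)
Your overall architecture (Deligne equidistribution over $t$ for each fixed $\chi$ with error $O_d(p^{-1/2})$ uniform in $\chi$, then averaging over the $\asymp p$ characters and discarding the $O_d(1)$ bad ones) matches the paper's, but the central monodromy claim is wrong, and wrong in exactly the way that the averaging over $\chi$ is needed to repair. You assert that for $\chi$ of large order $G_{\mathrm{geom}}(\mathcal{F}_\chi)=GL_{d-1}$, deduced from wild ramification of $\det\mathcal{F}_\chi$ at $t=\infty$. This cannot happen: by Deligne's Weil II, for a lisse pure sheaf on a normal variety over a finite field the identity component of $G_{\mathrm{geom}}$ is \emph{semisimple}, so $G_{\mathrm{geom}}^{\circ}$ is at most $SL_{d-1}$ and the full group can never be $GL_{d-1}$ (its maximal compact can never be $U(d-1)$) --- this is precisely the point the paper makes in the remark following the theorem. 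Concretely, the sheaf in question is \emph{everywhere tame} (Katz, \emph{Affine cohomological transforms}, 5.12 --- a fact the paper also uses to get an absolute constant in the error term), $\det\mathcal{F}_\chi$ is a tame, geometrically finite-order Kummer-type sheaf, and Katz's Theorem 5.13 identifies $G_{\mathrm{geom}}$ as $GL_k(d-1)=\{g:\det(g)^k=1\}$ with $k$ the order of $\chi\chi_2$. So for each fixed $\chi$ the classes $\theta_{\chi,g_{d,t}}(p)$ equidistribute only in $U_k(d-1)^{\sharp}$, not in $U(d-1)^{\sharp}$, no matter how large the order of $\chi$ is.

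The missing idea is therefore the following: after applying Deligne's theorem for fixed $\chi$, the Weyl sum against a nontrivial irreducible $\Lambda$ of $U(d-1)$ does \emph{not} tend to $0$; it has a main term $\langle\Lambda|_{U_k},1\rangle$, which is nonzero exactly when $\Lambda=\det(\cdot)^r$ for some $r\neq0$ with $k(\chi)\mid r$. One must then observe that for fixed $\Lambda$ only $O_{\Lambda}(1)$ of the $p-2$ characters $\chi$ satisfy this divisibility, so these surviving main terms are killed by the average over $\chi$, not by any individual monodromy computation. Without this step your argument would at best prove equidistribution of determinants of the $\theta_{\chi,g_{d,t}}(p)$ in a union of cosets of roots of unity for each $\chi$, which is strictly weaker than the theorem. (Your fallback suggestion of assembling all $\chi$ into a single sheaf via the Mellin transform is closer in spirit to what is actually needed, but as stated it is only a pointer, not an argument.)
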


\begin{proof}\
  This is an easy consequence of results of Katz
  (see~\cite[Th. 5.13]{katz-act}), the only ``twist'' being the extra
  averaging over all non-trivial Dirichlet characters to obtain
  unitary instead of special unitary (or similar) equidistribution.
\par
First of all, it is easy to check that if $p\nmid d(d-1)$ and
$t\in\Fp_p$ is such that $t^{d-1}\not=(1-d)^{d-1}$, the polynomial
$g_{d,t}=X^d-dX-t\in \Fp_p[X]$ is a strong Deligne polynomial in one
variable (in the language of~\cite{katz-act}, these are called
``weakly-supermorse'' polynomials). Hence the conjugacy classes in the
statement are well-defined.
\par
For simplicity, denote $\mathcal{U}$ the open subset of the affine
$t$-line where $t^{d-1}\not=(1-d)^{d-1}$.
\par
Now, according to the Weyl equidistribution criterion, we must show
that
$$
\lim_{p\ra +\infty}\frac{1}{p-2}\sums_{\chi\mods{p}}{
\frac{1}{|\mathcal{U}(\Fp_{p})|}
\sum_{t\in\mathcal{U}(\Fp_{p})}{
\Tr\Lambda(\theta_{\chi,g_{d,t}}(p))
}}=0.
$$
for any (fixed) non-trivial irreducible unitary representation
$\Lambda$ of the compact group $U(d-1)$.
\par
We isolate in the sum those characters $\chi$ where $\chi^{2d}=1$:
there are at most $2d$ of them. For any other character
$\chi\mods{p}$, the inner sum over $t\in \mathcal{U}(\Fp_p)$ is of the
type handled by the Deligne Equidistribution Theorem. Let $k=k(\chi)$
be the order of the Dirichlet character $\chi\chi_2$, where $\chi_2$
is the real character modulo $p$. By~\cite[Th.  5.13, (2)]{katz-act}
(the restriction $\chi^{2d}\not=1$ ensures the assumptions hold),
provided $p\nmid d(d-1)$, e.g., $p>d(d-1)$, the relevant geometric
monodromy group is equal to
$$
GL_{k}(d-1)=
\{g\in GL(d-1)\,\mid\, \det(g)^{k}=1\},
$$
with maximal compact subgroup
$$
U_{k}(d-1)=\{g\in U(d-1)\,\mid\, \det(g)^{k}=1\}.
$$
\par
For simplicity, we write $U=U(d-1)$, $U_{k}=U_{k}(d-1)$.  The
restriction of $\Lambda$ to $U_{k}$ is a finite sum of irreducible
representations of this group (possibly including trivial
components). 
Applying~\cite[Th.  9.2.6, (5)]{katzsarnak} to each of the non-trivial
ones (and the obvious identity for the trivial components), we find
that
$$
\frac{1}{|\mathcal{U}(\Fp_{p})|}\sum_{t\in\mathcal{U}(\Fp_{p})}{
\Tr\Lambda(\theta_{\chi,g_{d,t}}(p))
}=\langle \Lambda\mid U_{k},1\rangle+
O((\dim\Lambda)dp^{-1/2})
$$
where the implied constant is \emph{absolute} (this is because we have
a one-parameter family, so we can apply~\cite[9.2.5]{katzsarnak} and
the fact proved in~\cite[5.12]{katz-act} that the relevant sheaf is
everywhere tame, so the Swan-conductor contribution is zero; the
parameter curve $\mathcal{U}$ has $d$ points at infinity, which give
the factor $d$ above).  
\par
Using Frobenius reciprocity or direct integration (using,
e.g,~\cite[Lemma AD.7.1]{katzsarnak}), we find that the multiplicity
of the trivial representation in $\Lambda$ (restricted to $U_{k}$)
satisfies
$$
\langle \Lambda\mid U_{k},1\rangle
=\sum_{h\in\Zz}{\langle \Lambda,\det(\cdot)^{hk}\rangle}
=\begin{cases}
1&\text{ if } \Lambda=\det(\cdot)^{hk}\text{ for some } h\in\Zz-\{0\},\\
0&\text{ otherwise}.
\end{cases}
$$
\par
For a given $\Lambda$, this is equal to $1$ only if $\dim\Lambda=1$,
so $\Lambda=\det(\cdot)^r$ for some $r\in\Zz-\{0\}$, and if $\chi$ is
such that $k(\chi)\mid r$.  The number of such characters $\chi$ is
therefore $\ll 1$, the implied constant depending on $\Lambda$. Hence
we find, after adding back the characters with $\chi^{2d}=1$, that
$$
\frac{1}{p-2}\sums_{\chi\mods{p}}{
\frac{1}{|\mathcal{U}(\Fp_{p})|}
\sum_{t\in\mathcal{U}(\Fp_{p})}{
\Tr\Lambda(\theta_{\chi,g_{d,t}}(p))
}}\ll \frac{d}{p}+\frac{(\dim \Lambda)d}{p^{1/2}},
$$
where the implied constant depends on $\Lambda$. This confirms the
claimed equidistribution.
\end{proof}

\begin{proof}[Proof of Theorem~\ref{th-ff-strong}]
  This an easy consequence of Theorem~\ref{th-katz}: for any open set
  $V\subset \Cc$, we have first that
$$
|\mathcal{U}(\Fp_p)|\sim p
$$
as $p$ goes to infinity, and then we can write
\begin{multline*}
  \liminf_{p\ra +\infty}{
    \frac{1}{|X(p)|}|\{(\chi,t)\in X(p)\,\mid\,
\chi^d\not=1,\quad L(\demi,g_{d,t},\chi)\in
      V \}|}\\
  \geq \mu_{d-1}(\{g\in U(d-1)\,\mid\, \det(1-g)\in V\}),
\end{multline*}
and then we apply Theorem~\ref{th-rmt-quant}. 
\end{proof}

\begin{remark}
  In Theorem~\ref{th-katz}, we performed the average over $\chi$,
  because for ``standard'' families of exponential sums (those
  parametrized by points of algebraic varieties), the (connected
  component of the) geometric monodromy group is always semisimple, so
  its center is finite and its maximal compact subgroup can never be
  $U(N)$. However, one may expect that $t$ could be fixed in the
  example above, and that (for instance) the conjugacy classes
$$
\{\theta_{\chi}(p)=\theta_{\chi,X^d-dX-1}\,\mid\, \chi\mods{p}\text{
  non-trivial}\}
$$
corresponding to the exponential sums
$$
S(\chi)=\sum_{x\in\Fp_p}{\chi(x^d-dx-1)}
$$
(where $d>6$, $p\nmid d(d-1)$) would be already equidistributed in
$U(d-1)^{\sharp}$ as $p\ra +\infty$. 
\par
Very recently, N. Katz~\cite{katz-mellin} has indeed shown that for
such families there is always an a-priori ``Sato-Tate law'', i.e.,
that the conjugacy classes become equidistributed in $K^{\sharp}$ for
some compact group $K\subset U(d-1)$.  In fact, in this work, Katz
proves the required equidistribution for (most) fixed $t$ \emph{in the
  vertical direction} where one looks at characters of $\Fp_{q^m}$ for
fixed $q$ and $m\ra +\infty$ (see~\cite[Th. 7.2, Th. 17.6, Remark
17.7]{katz-mellin}). It seems quite possible that the ``horizontal''
direction we are interested in will also follow from these new
techniques. 
\par
Similarly, it is likely that families of hyper-Kloosterman sums of
certain types exhibit full unitary monodromy, e.g., for any integer
$n\geq 1$, any additive character $\psi\,:\, \Fp_q\ra \Cc^{\times}$,
any multiplicative character $\chi\,:\, \Fp_{q}^{\times}\ra
\Cc^{\times}$, one can consider the sums
$$
\sum_{\stacksum{x_1,\ldots,x_{n}\in\Fp_q^{\times}}{x_1\cdots x_n=1}}{
  \chi(x_1)\psi(x_1+\cdots+x_{n-1}+x_n) },
$$
for which the basic theory (due to Deligne~\cite[\S 7]{sga4h}) shows
that the associated $L$-function (unitarily normalized so the central
point is $s=0$) is
$$
\det(1-q^{-s}\theta_{\psi,\chi}(q))
$$
for some unique conjugacy class $\theta_{\psi,\chi}(q)\in U(n-1)$. One
would then expect (this was suggested by Katz, and is again
potentially in the realm of his recent work~\cite{katz-mellin}) that
if we fix a character $\psi$ for $\Fp_p$ (e.g., $\psi(x)=e(x/p)$) and
define $\psi_m(x)=\psi(\Tr_{\Fp_{p^m}/\Fp_p}(x))$ for all $m\geq 1$,
the sets of conjugacy classes
$$
\{
\theta_{\psi_m,\chi}(p^m)\,\mid\,
\chi\,:\, \Fp_{p^m}^{\times}\ra \Cc^{\times}
\}
$$
become equidistributed in $U(n-1)^{\sharp}$ as $m\ra +\infty$. 
\end{remark}

\subsection{Symplectic symmetry}\label{ssec-symplectic}

A typical example of symplectic symmetry involves families of
$L$-functions of algebraic curves over finite fields. For simplicity,
we will consider one of the simplest ones, but we first start by
proving distribution results for characteristic polynomials of
symplectic matrices, which are of independent interest.  
\par
We first remark that for $A\in USp(2g,\Cc)$, the characteristic
polynomial can be expressed in the form
$$
\det(1-TA)=\prod_{1\leq j\leq g}{(1-e^{i\theta_j}T)(1-e^{-i\theta_j}T)}
$$
for some eigenangles $\theta_j$, $1\leq j\leq g$, and it follows that
$$
\det(1-A)=\prod_{1\leq j\leq g}{|(1-e^{i\theta_j})|^2}\geq 0.
$$
\par
This positivity is reflected in a shift in expectation in the
mod-Gaussian convergence (it also means that the argument is not
an interesting quantity here). We obtain:

\begin{proposition}\label{pr-symplectic}
  For $g\geq 1$, let 
$$
X_g=\log\det(1-T_g)-\demi\log (\tfrac{\pi g}{2}),
$$ 
where $T_g$ is a Haar-distributed random matrix in the unitary
symplectic group $USp(2g,\Cc)$. Then $X_g$ converges in mod-Gaussian
sense with $Q_g(t)=(\log \demi g)t^2$ and limiting function\footnote{\
  The expressions in~\cite[(32), (67)]{ks-l-functions}
  and~\cite[Cor. 4.2]{conrey-farmer} are rather more complicated, but
  of course they are equivalent.}
\begin{equation}\label{eq-symplectic-limiting}
\Phi_{Sp}(t)=
\frac{G(\tfrac{3}{2})}{G(\tfrac{3}{2}+it)}.
\end{equation}
\par
Indeed, we have
$$
\expect(e^{itX_g})=
\exp(-(\log \demi g)t^2/2)\Phi_{Sp}(t)\Bigl(1+O
\Bigl(
\frac{1+|t|^3}{g}
\Bigr)
\Bigr)
$$
for $|t|\leq g^{1/6}$, where the implied constant is absolute.
\end{proposition}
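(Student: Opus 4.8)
The plan is to mimic the treatment of the unitary group in Example~\ref{ex-keating-snaith} and its quantitative companion Proposition~\ref{pr-app}. Since $\det(1-A)\geq 0$ for every $A\in USp(2g,\Cc)$, the moment function $s\mapsto\expect(\det(1-T_g)^s)$ is well defined for $\Reel(s)$ in a suitable range and in particular along the imaginary axis $s=it$, $t\in\Rr$, where it coincides with $\expect(e^{it\log\det(1-T_g)})$; note that $\expect(e^{itX_g})$ differs from it only by the deterministic factor $\exp(-\demi\,it\log(\tfrac{\pi g}{2}))$. The first step is to record the exact value of $\expect(\det(1-T_g)^s)$ as a ratio of products of Gamma functions — the symplectic analogue of the Keating--Snaith formula~(\ref{eq-ks}). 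One may quote this from~\cite{ks-l-functions} or~\cite{conrey-farmer}, or rederive it from scratch via the Weyl integration formula on $USp(2g)$ (writing the eigenvalues as $e^{\pm i\theta_j}$, so $\det(1-A)=\prod_{j\leq g}(2-2\cos\theta_j)$) together with a Selberg-type integral evaluation.

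Next I would convert the product into Barnes $G$-functions by means of the identity~(\ref{eq-ratio-gamma}). Exactly as in Example~\ref{ex-keating-snaith}, the resulting expression factors as a $g$-independent ratio of $G$-values, which one checks to be $G(\tfrac32)/G(\tfrac32+it)=\Phi_{Sp}(t)$, times finitely many ratios of $G$ evaluated at arguments of size comparable to $g$ (of the shape $G(g+c+it)/G(g+c)$ and $G(2g+c+it)/G(2g+c)$ for a handful of explicit constants $c$), together with the elementary factor coming from the normalization of $\det(1-A)$. Feeding the asymptotic expansion of $\log G$ (as used in~\cite{e-s}; see also~\cite{fl}) into each of these ratios produces: a quadratic term in $t$, whose total contribution must work out to $-\demi(\log\tfrac{g}{2})t^2=-Q_g(t)/2$; a term linear in $it$, whose coefficient is precisely cancelled by the deterministic constant $\demi\log(\tfrac{\pi g}{2})$ subtracted in the definition of $X_g$ — this recentring is the ``shift in expectation'' announced before the statement, and is forced by the positivity of $\det(1-A)$; and a bounded term which, combined with the $g$-independent factor above, reconstitutes $\Phi_{Sp}(t)$.

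The step I expect to be the main obstacle is the uniformity of the error term $O((1+|t|^3)/g)$ over the growing range $|t|\leq g^{1/6}$, since the asymptotics of the Barnes function in~\cite{e-s} are stated only for $t$ in fixed compact sets. As in Proposition~\ref{pr-app}, one must instead use Stirling's formula — respectively the asymptotic expansion of $\log G(z)$ — equipped with an explicit remainder, and check that when the argument is $z=g+c+it$ or $z=2g+c+it$ with $|t|\leq g^{1/6}$ it remains in a region where that remainder is $O((1+|t|^3)/g)$; the cubic power of $|t|$ is just what survives from truncating the expansion of $\log(1+(c+it)/z)$ after the quadratic term. The rest is bookkeeping with constants: verifying that the quadratic coefficient equals $\log\tfrac{g}{2}$, that the linear coefficient equals $\demi\log(\tfrac{\pi g}{2})$ up to $O(1/g)$, and that the bounded part equals $\log\Phi_{Sp}(t)$ — which along the way also confirms that the more complicated closed forms in~\cite{ks-l-functions} and~\cite{conrey-farmer} agree with~(\ref{eq-symplectic-limiting}).
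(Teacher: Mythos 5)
Your proposal matches the paper's proof essentially step for step: quote the symplectic Keating--Snaith moment formula, rewrite the Gamma products via the Barnes identity~(\ref{eq-ratio-gamma}), peel off the $g$-independent factor $G(\tfrac32)/G(\tfrac32+it)$ and the deterministic recentring, and then invoke a uniform Barnes asymptotic (Proposition~\ref{pr-app}(3)) to produce the $\exp(-Q_g(t)/2)$ main term and the $O((1+|t|^3)/g)$ error in the range $|t|\leq g^{1/6}$. The one small technical variation is that the paper's Appendix does not run a Stirling-type expansion of $\log G$ directly, but instead uses the exact Ehrhardt--Silbermann product representation of $G(1+z+n)/G(1+n)$ and bounds the correction factor $S_n(z)$ — a cleaner route to the same uniform estimate — though your description of where the cubic error arises is correct in spirit.
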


Figure~2 is a graph illustrating the logarithmic growth of
$\tfrac{1}{t^2}\log |\Phi_{Sp}(t)|$.
\begin{figure}[ht]
\centering
\includegraphics[width=4in]{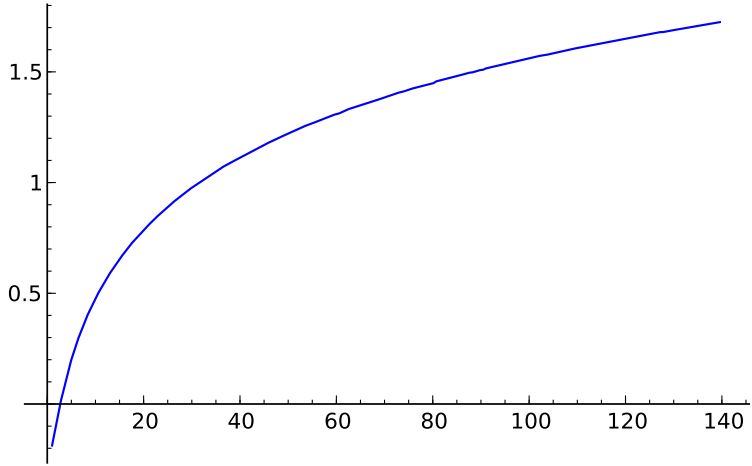}
\caption{Graph of $\tfrac{1}{t^2}\log |\Phi_{Sp}(t)|$, $1\leq t\leq
  140$}
\end{figure}

\begin{proof}
  (Compare~\cite[Prop. 4.9]{jkn})
  Keating-Snaith~\cite[(10)]{ks-l-functions} compute that
$$
\expect(e^{it \log \det(1-T_g)})=
2^{2git}\prod_{j=1}^g{
\frac{\Gamma(1+g+j)\Gamma(\demi + it+j)}
{\Gamma(\demi +j)\Gamma(1+it+g+j)}
},
$$
which, together with the formula~(\ref{eq-ratio-gamma}), gives
$$
\expect(e^{it X_g})=\Bigl(\frac{\pi g}{2}\Bigr)^{-it/2}
\frac{G(\tfrac{3}{2})}{G(\tfrac{3}{2}+it)}
\times 
2^{2git}
\frac{G(\tfrac{3}{2}+it+g)G(2+2g)G(2+it+g)}
{G(\tfrac{3}{2}+g)G(2+g)G(2+it+2g)}.
$$
\par
By applying Proposition~\ref{pr-app}, (3) in the Appendix, we get
$$
\expect(e^{itX_g})=\Bigl(\frac{g}{2}\Bigr)^{-t^2/2}\Phi_{Sp}(t)
\Bigl(1+O\Bigl(\frac{1+|t|^3}{g}\Bigr)\Bigr),
$$
as claimed.
\end{proof}

In particular, the Central Limit Theorem for $\det(1-T_g)$ takes the
form  of the convergence in law
$$
\frac{\log \det(1-T_g)-\demi \log \tfrac{\pi g}{2}}
{(\log( g/2))^{1/2}}\quad\convlaw\quad \text{(standard Gaussian)},
$$
so, for any $a<b$, we have
$$
\proba\Bigl(
\Bigl(\frac{\pi g}{2}\Bigr)^{1/2}
e^{a\sqrt{\log (g/2)}}
<\det(1-T_g)< 
\Bigl(\frac{\pi g}{2}\Bigr)^{1/2}
e^{b\sqrt{\log (g/2)}}
\Bigr)\ra \frac{1}{\sqrt{2\pi}}
\int_a^b{e^{-t^2/2}dt}.
$$
\par
On the other hand, by applying Theorem~\ref{th-2}, as we can according
to the previous proposition, we can control the probability of the
values of $\det(1-T_g)$ in much smaller (dyadic or similar) intervals:

\begin{corollary}
Let $U=]a,b[$ with $a<b$ real numbers. We have
\begin{multline*}
\proba\Bigl(
e^a\Bigl(\frac{\pi g}{2}\Bigr)^{1/2}
<\det(1-T_g)< 
e^b\Bigl(\frac{\pi g}{2}\Bigr)^{1/2}
\Bigr)=
\frac{1}{\sqrt{2\pi  \log \demi g}}
\int_{a}^b{\exp\Bigl(-\frac{t^2}{2\log \demi g}\Bigr)dt}\\
+O\Bigl(\frac{\max(b-a,(b-a)^{-1})}{\log g}+
\frac{\max(1,b-a)}{(\log g)^{1/2+1/29}}
\Bigr)
\end{multline*}
for $g\geq 2$, where the implied constant is absolute. In particular
\begin{multline*}
\proba\Bigl(
\Bigl(\frac{\pi g}{2}\Bigr)^{1/2}
<\det(1-T_g)< 
2\Bigl(\frac{\pi g}{2}\Bigr)^{1/2}
\Bigr)=
\frac{1}{\sqrt{2\pi \log \demi g}}
\int_{0}^{\log 2}{\exp\Bigl(-\frac{t^2}{2\log \demi g}\Bigr)dt}\\
+O\Bigl(\frac{1}{(\log g)^{1/2+1/29}}
\Bigr).
\end{multline*}
\end{corollary}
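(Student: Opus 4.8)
The plan is to recast the event as $\{X_g\in(a,b)\}$ for the random variable $X_g$ of Proposition~\ref{pr-symplectic}, and then invoke Theorem~\ref{th-2}. Since $\det(1-T_g)\geq0$ almost surely and vanishes only on a Haar-null set, the inequalities $e^a(\pi g/2)^{1/2}<\det(1-T_g)<e^b(\pi g/2)^{1/2}$ hold exactly when $a<\log\det(1-T_g)-\demi\log(\pi g/2)<b$, i.e.\ when $X_g\in U:=(a,b)$. So the corollary is a quantitative estimate for $\proba(X_g\in U)$.

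The first step is to verify the hypotheses of Theorem~\ref{th-2} with $m=1$. By Proposition~\ref{pr-symplectic}, $(X_g)$ converges in the mod-Gaussian sense with $Q_g(t)=(\log\demi g)\,t^2$, so $\sigma_g=\delta_{1,g}=\log\demi g\asymp\log g$; the convergence is $1$-balanced, and $\sigma_g\geq1$ once $g\geq2e$ (smaller $g$ cost only a worse absolute constant, since all probabilities are bounded). For condition~(1), the estimate $\expect(e^{itX_g})=e^{-Q_g(t)/2}\Phi_{Sp}(t)(1+O((1+|t|^3)/g))$ of Proposition~\ref{pr-symplectic} gives~(\ref{eq-strong-convergence}): using $g\asymp e^{\sigma_g}$, the relative error on $|t|\leq\sigma_g^{a}$ is $\ll\sigma_g^{3a}e^{-\sigma_g}\ll e^{-\alpha\sigma_g^{C}}$ for, say, $C=1$ and $\alpha=\demi$, and for any fixed $a>0$. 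Condition~(2) holds because $G$ is entire with $G(\tfrac32+it)\neq0$ for all real $t$, so $\Phi_{Sp}(t)=G(\tfrac32)/G(\tfrac32+it)$ is real-analytic on $\RR$; condition~(3) holds with any $A>2$ and a suitable $\beta$, since the Barnes asymptotics give $\log|\Phi_{Sp}(t)|\asymp t^2\log(2|t|)$ (see Figure~2). In~(\ref{eq-d}), with $m=\mu=1$ the term $3m(m+1)\mu A=6A$ dominates $\max\{a^{-1},A/C\}$, so any $D>2(2+6A)$ is admissible; letting $A\downarrow2$ shows that $D=29$ works, which produces the exponent $1/D=1/29$ in~(\ref{eq-quantitative}).

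The second step is the application itself. If $b-a\leq1$, apply Theorem~\ref{th-2} directly to $U$, a box of half-width $\eps=(b-a)/2\leq1$ about the midpoint $(a+b)/2$: since $G_g$ is a centred Gaussian of variance $\log\demi g$, the main term $\proba(G_g\in U)$ equals $(2\pi\log\demi g)^{-1/2}\int_a^b\exp(-t^2/(2\log\demi g))\,dt$, and the error in~(\ref{eq-quantitative}) is $O(\sigma_g^{-1/2-1/29}+(b-a)^{-1}\sigma_g^{-1})$, which has the claimed shape in this range. If $b-a>1$, I would split $(a,b)$ into $\lceil b-a\rceil\asymp b-a$ consecutive intervals of common length in $(\tfrac12,1]$, apply Theorem~\ref{th-2} to each (all of half-width $\asymp1$), and sum, using that $\proba(G_g\in\cdot)$ is additive and that $X_g$ has no atoms; this produces an error $O((b-a)\,\sigma_g^{-1/2-1/29})$, again of the claimed shape. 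Combining the two cases and recalling $\sigma_g\asymp\log g$ yields the first displayed formula, with an absolute implied constant, because the data $(m,\mu,\Phi_{Sp},a,\alpha,C)$ and the constant of Proposition~\ref{pr-symplectic} are all absolute. The ``in particular'' statement is then the case $a=0$, $b=\log2$: here $b-a\in(0,1)$ is bounded away from $0$, so the term $(b-a)^{-1}\sigma_g^{-1}$ is absorbed, and the error collapses to $O((\log\demi g)^{-1/2-1/29})=O((\log g)^{-1/2-1/29})$.

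The main obstacle will be bookkeeping rather than anything conceptual: confirming that $A$ can be taken arbitrarily close to $2$ and that the $6A$ term governs the constraint~(\ref{eq-d}), so that the specific exponent $1/29$ is legitimate, together with the subdivision argument needed because Theorem~\ref{th-2} is stated only for boxes of width at most $1$. Verifying the uniformity condition~(1) is routine once one notes that here the matrix size $g$ is exponentially large in $\sigma_g$, so the polynomial loss in $\|t\|$ is completely swamped by the $1/g$ saving.
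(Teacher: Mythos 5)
Your argument is correct and follows essentially the same route as the paper's proof: verify the hypotheses of Theorem~\ref{th-2} from Proposition~\ref{pr-symplectic} with $m=\mu=1$, $A$ arbitrarily close to $2$ and the $6A$ term dominating in~(\ref{eq-d}) to get $D=29$; apply the theorem directly when $b-a\leq 1$; and subdivide $]a,b[$ into $O(b-a)$ intervals of bounded length when $b-a>1$, discarding the endpoints by absolute continuity. The only differences are immaterial choices of constants ($C=1$ versus $C=1/2$, $\lceil b-a\rceil$ versus $2\lceil b-a\rceil$ subintervals).
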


\begin{proof}
  If $b-a\leq 1$, we apply Theorem~\ref{th-2}, with the constants
  $\mu=1$, $A$ arbitrarily close to $2$, $a$ arbitrarily large and
  $C=1/2$, so that $D$ can be any number with
$$
D>2(1+1+12)=28,
$$
and in particular $D=29$ is valid. If $b-a>1$, we split the interval
$]a,b[$ into $2\lceil b-a\rceil$ intervals of length
$$
\frac{1}{4}\leq
\frac{b-a}{2\lceil
b-a\rceil}\leq 1,
$$
and apply the previous case to the interior of those intervals. Since
the joint distribution of eigenvalues of $T_g$ is absolutely
continuous with respect to Lebesgue measure, the probability of
falling on one of the missing endpoints is zero, and summing over
these intervals gives the result.
\end{proof}

We now deduce an arithmetic corollary, using families of hyperelliptic
curves over finite fields.  For any odd $q$, any integer $g\geq 1$ and
any squarefree monic polynomial $f\in \Fp_q[X]$ of degree $2g+1$, let
$C_f$ be the smooth projective model of the affine hyperelliptic curve
$$
C_f\,:\, y^2=f(x).
$$
\par 
The number of $\Fp_{q^m}$-rational points on $C_f$ satisfies
$$
|C_f(\Fp_{q^m})|= q^m+1-\sum_{x\in
  \Fp_{q^m}}{\chi_2(N_{\Fp_{q^m}/\Fp_q}(f(x)))}=
q^m+1-S_m(\chi_2,f)
$$
where $\chi_2$ is the quadratic character of $\Fp_q^{\times}$ and the
notation is as in Section~\ref{ssec-unitary}. The associated
$L$-function (the numerator of the zeta function) is defined by
$$
L(C_f,s)=L(s,f,\chi_2),
$$
or, in other words, we have
$$
L(C_f,s)=Z(C_f,q^{-s}),\quad\quad
Z(C_f,T)=\exp\Bigl(\sum_{m\geq 1}{\frac{S_m(\chi_2,f)}{m}T^m}\Bigr).
$$
\par
Weil proved that $Z(C_f,T)$ is a polynomial in $\Zz[T]$, of degree
$2g$, all roots of which have modulus $\sqrt{q}$, and which is
\emph{symplectic}: there is a unique conjugacy class $\theta_{f}(q)$
in $USp(2g,\Cc)$ such that
$$
L(C_f,s+\demi)=\det(1-q^{-s}\theta_{f}(q)).
$$
 
\begin{theorem}\label{th-hyperelliptic}
  Let $\mathcal{H}_g(\Fp_q)$ be the set of squarefree, monic, polynomials of
  degree $2g+1$ in $\Fp_q[X]$. Fix a non-empty open interval
  $]\alpha,\beta[\subset ]0,+\infty[$. For all $g$ large enough, we
  have
\begin{equation}\label{eq-symplectic}
\liminf_{q\ra +\infty}{\frac{1}{|\mathcal{H}_g(\Fp_q)|}
\Bigl|\Bigl\{
f\in \mathcal{H}_g(\Fp_p)\,\mid\, 
\frac{L(C_f,1/2)}{\sqrt{\pi g/2}}\in ]\alpha,\beta[
\Bigr\}\Bigr|
}
\gg \frac{1}{\sqrt{\log g}}.
\end{equation}
\end{theorem}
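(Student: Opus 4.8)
The plan is to combine two ingredients. The first, already in hand, is the preceding Corollary: via Proposition~\ref{pr-symplectic} and Theorem~\ref{th-2} it controls, for a Haar-random $T_g\in USp(2g,\Cc)$ and real numbers $a<b$, the probability that $\det(1-T_g)/\sqrt{\pi g/2}$ lies in $]e^a,e^b[$. Taking $a=\log\alpha$ and $b=\log\beta$, it gives
$$
\proba\Bigl(\frac{\det(1-T_g)}{\sqrt{\pi g/2}}\in\,]\alpha,\beta[\Bigr)=\frac{1}{\sqrt{2\pi\log\demi g}}\int_{\log\alpha}^{\log\beta}{\exp\Bigl(-\frac{t^2}{2\log\demi g}\Bigr)dt}+O\Bigl(\frac{1}{(\log g)^{1/2+1/29}}\Bigr),
$$
the implied constant depending only on $\alpha,\beta$. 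For $g\to+\infty$ with $\alpha,\beta$ fixed the integrand tends to $1$ uniformly on $[\log\alpha,\log\beta]$, so the main term is $\asymp(\log g)^{-1/2}$ and dominates the error; hence this probability is $\gg(\log g)^{-1/2}$ once $g$ is large enough in terms of $\alpha$ and $\beta$.

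The second ingredient is the symplectic analogue of Theorem~\ref{th-katz}: for $g$ fixed, as $q\to+\infty$ over odd prime powers, the conjugacy classes $\theta_f(q)$ attached to $f\in\mathcal{H}_g(\Fp_q)$ become equidistributed in $USp(2g,\Cc)^{\sharp}$ with respect to the image of probability Haar measure. This is classical: by Katz--Sarnak~\cite[Ch.~10]{katzsarnak}, the family of hyperelliptic curves $y^2=f(x)$, parametrized by the complement of the discriminant locus in the affine space of monic polynomials of degree $2g+1$, has geometric monodromy group equal to the full symplectic group $Sp(2g)$ when the characteristic is odd; Deligne's equidistribution theorem then yields the stated equidistribution. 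Since $g$ is fixed and we only let $q\to+\infty$, no uniformity in $g$ is needed, and the argument is genuinely simpler than that of Theorem~\ref{th-katz}; in particular no averaging over auxiliary characters is required.

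Granting these two facts, the theorem follows at once. Since $L(C_f,1/2)=\det(1-\theta_f(q))$ and $\det(1-A)\geq 0$ for $A\in USp(2g,\Cc)$, the subset of $USp(2g,\Cc)^{\sharp}$ consisting of those $A$ with $\det(1-A)/\sqrt{\pi g/2}\in\,]\alpha,\beta[$ is open, so, exactly as in the proof of Theorem~\ref{th-ff-strong} and using~\cite[Th.~2.1~(iv)]{billingsley}, equidistribution gives
$$
\liminf_{q\to+\infty}{\frac{1}{|\mathcal{H}_g(\Fp_q)|}\Bigl|\Bigl\{f\in\mathcal{H}_g(\Fp_q)\,\mid\, \frac{L(C_f,1/2)}{\sqrt{\pi g/2}}\in\,]\alpha,\beta[\Bigr\}\Bigr|}\geq \proba\Bigl(\frac{\det(1-T_g)}{\sqrt{\pi g/2}}\in\,]\alpha,\beta[\Bigr).
$$
By the first paragraph the right-hand side is $\gg(\log g)^{-1/2}$ for $g$ large enough in terms of $\alpha,\beta$, which is~(\ref{eq-symplectic}). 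The only point calling for serious input is the monodromy computation underlying the equidistribution statement; but that is a classical result, so in practice the proof is a straightforward assembly, and it is the quantitative strength of Theorem~\ref{th-2}, feeding through the Corollary, that produces the sharp order $(\log g)^{-1/2}$ rather than a mere positivity statement.
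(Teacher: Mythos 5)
Your proposal is correct and follows essentially the same path as the paper: mod-Gaussian convergence for $USp(2g,\Cc)$ (Proposition~\ref{pr-symplectic}), the quantitative local estimate from Theorem~\ref{th-2} (via the Corollary), Katz--Sarnak equidistribution of the $\theta_f(q)$, and the open-set lower bound for convergence in law. The paper packages the second step slightly differently by citing its earlier work~\cite[Prop.~4.9]{jkn} for the convergence in law of $\log\det(1-\theta_f(q))-\demi\log(\pi g/2)$ to $X_g$, and it explicitly introduces $\mathcal{H}_g^*(\Fp_q)$ (where $L(C_f,1/2)\neq0$) together with $|\mathcal{H}_g^*(\Fp_q)|\sim|\mathcal{H}_g(\Fp_q)|$; your route sidesteps that normalization because $\alpha>0$ already excludes the vanishing locus, which is a perfectly valid shortcut.
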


(Note that this is in fact a very weak version of what we can prove).

\begin{proof}
  Let first $\mathcal{H}_g^*(\Fp_q)$ be the set of $f\in
  \mathcal{H}_g(\Fp_q)$ for which
  $L(C_f,1/2)\not=0$. In~\cite[Prop. 4.9]{jkn}, we showed, using the
  relevant equidistribution computation in~\cite[10.8.2]{katzsarnak}
  that the (real-valued) random variables
$$
L_g=\log\det(1-\theta_{F}(q))-\demi \log (\tfrac{\pi g}{2}),
$$
on $\mathcal{H}_g^*(\Fp_q)$ (with counting measure) converges in law to
$$
X_g=\log\det(1-T_g)-\demi\log (\tfrac{\pi g}{2}),
$$
where $T_g$ is a random matrix in the unitary symplectic group
$USp(2g,\Cc)$, distributed according to Haar measure. The previous
proposition shows that Theorem~\ref{th-2} is applicable to $X_g$ with
covariance $Q_g(t)=(\log \demi g)t^2$ and limiting function
$\Phi_{Sp}(t)$. Letting $q\ra +\infty$ as in the previous section, we
get
$$
\liminf_{q\ra +\infty}
\frac{1}{|\mathcal{H}_g^*(\Fp_q)|}
\Bigl|\Bigl\{
f\in \mathcal{H}_g^*(\Fp_p)\,\mid\, 
\log L(C_f,1/2)-\demi \log (\tfrac{\pi g}{2})
\in ]\alpha,\beta[\Bigr\}\Bigr|
\gg \frac{1}{\sqrt{\log g}}
$$
for $g$ large enough. Since
$$
|\mathcal{H}_g^*(\Fp_q)|=|\mathcal{H}_g(\Fp_q)|(1+o(1))=q^{2g+1}(1+o(1))
$$
for fixed $g$ and $q\ra +\infty$ (by an easy application of the
equidistribution, see~\cite[Prop. 4.9]{jkn}), we get the result stated
by exponentiating.
\end{proof}

\begin{remark}
  The lower bound~(\ref{eq-symplectic}) is good enough to combine with
  various other statements proving arithmetic properties of
  $L$-functions which hold for ``most'' hyperelliptic curves.  For
  instance, from~\cite[Prop. 1.1]{kow-relations} (adapted
  straightforwardly to all hyperelliptic curves instead of special
  one-parameter families), it follows that if we denote by
  $\tilde{\mathcal{H}}_g(\Fp_q)$ the set of $f\in
  \mathcal{H}_g(\Fp_q)$ such that the eigenvalues of $\theta_f(q)$
  satisfy no non-trivial multiplicative relation,\footnote{\ An
    analogue of the hypothetical statement of $\Qq$-linear
    independence of the ordinates of zeros of the Riemann zeta
    function; non-trivial refers to a relation that can not be deduced
    from the fact that, if $e^{i\theta}$ is an eigenvalue, so is its
    inverse $e^{-i\theta}$.}  then we have
$$
|\{f\in \mathcal{H}_g(\Fp_q)\,\mid\, f\notin
\tilde{\mathcal{H}}_g(\Fp_q)\}|\ll_g q^{1-\gamma}
$$
for some $\gamma=\gamma(g)>0$, and hence we get
$$
\liminf_{q\ra +\infty}{\frac{1}{|\mathcal{H}_g(\Fp_q)|} 
\Bigl|\Bigl\{ 
f\in \tilde{\mathcal{H}}_g(\Fp_p)\,\mid\, 
\frac{L(C_f,1/2)}{\sqrt{\pi g/2}}\in ]\alpha,\beta[ 
\Bigr\}\Bigr| } \gg \frac{1}{\sqrt{\log g}}
$$
for $g$ large enough.
\end{remark}


\subsection{Orthogonal symmetry}

Orthogonal symmetry, in number theory, features pro\-minently in
families of elliptic curves. In contrast with symplectic groups, there
are a number of ``flavors'' involved, due to the ``functional equation''
$$
T^N\det(1-T^{-1}A)=\det(-A)\det(1-TA)
$$
for an orthogonal matrix $A\in O(N,\Rr)$ (the standard maximal compact
subgroup of the orthogonal group $O(N,\Cc)$), which implies that
$\det(1-A)$ is zero for ``trivial'' reasons if $N$ is even and
$\det(A)=-1$ or $N$ is odd and $\det(A)=1$. When this happens, it is
of great interest to investigate the distribution of the first
derivative at $T=1$ of the reversed characteristic polynomial. For
simplicity, however, we restrict our attention here to $N$ even and
matrices with determinant $1$, i.e., to the subgroup $SO(2N,\Rr)$ of
$O(2N,\Rr)$, where $N\geq 1$. In that case, it is also true that
eigenangles come in pairs of inverses, and therefore we have
$\det(1-A)\geq 0$.
\par
As in the previous section, we start with random matrix computations.

\begin{proposition}\label{pr-orthogonal}
  For $N\geq 1$, let 
$$
X_N=\log\det(1-T_N)-\demi\log (\tfrac{8\pi }{N}),
$$
where $T_N$ is a Haar-distributed random matrix in the special
orthogonal group $SO(2N,\Rr)$. Then $X_N$ converges in mod-Gaussian
sense with $Q_N(t)=(\log \demi N)t^2$ and limiting function
  \begin{equation}\label{eq-orthogonal-limiting}
\Phi_{SO}(t)=
\frac{G(\tfrac{1}{2})}{G(\tfrac{1}{2}+it)}.
\end{equation}
\par
Indeed, we have
$$
\expect(e^{itX_N})=
\exp(-(\log \demi N)t^2/2)\Phi_{SO}(t)\Bigl(1+O
\Bigl(
\frac{1+|t|^3}{N}
\Bigr)
\Bigr)
$$
for $|t|\leq N^{1/6}$, where the implied constant is absolute.
\end{proposition}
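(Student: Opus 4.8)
The plan is to follow the proof of Proposition~\ref{pr-symplectic} almost word for word, feeding in the orthogonal moment formula in place of the symplectic one. First I would quote the exact generating function of Keating--Snaith~\cite{ks-l-functions} for the special orthogonal group $SO(2N,\Rr)$, which writes $\expect(e^{it\log\det(1-T_N)})$ as a finite product of ratios of $\Gamma$-values times an elementary exponential prefactor of the shape $2^{2Nit}$ (exactly as $2^{2git}$ appears in the symplectic case). The half-integer shifts $j-\demi$ that occur in this product --- in place of the $j$ and $\demi+j$ shifts of the symplectic formula --- are what will eventually produce the argument $\demi$ in $\Phi_{SO}$ rather than $\tfrac{3}{2}$.

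Second, I would apply the identity (\ref{eq-ratio-gamma}) to rewrite each product $\prod_{j=1}^{N}\Gamma(j+\theta)$ as a quotient of Barnes $G$-values. This splits $\expect(e^{it\log\det(1-T_N)})$ into one $N$-independent factor, which after simplification will be $G(\demi)/G(\demi+it)$, times one $N$-dependent factor built from $G$ evaluated at arguments growing linearly in $N$.

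Third, I would feed that $N$-dependent factor into Proposition~\ref{pr-app}, (3) in the Appendix (the uniform asymptotic expansion for ratios of Barnes functions). This simultaneously produces the Gaussian decay $\exp(-(\log\tfrac{N}{2})t^2/2)$, the error $O((1+|t|^3)/N)$ valid for $|t|\le N^{1/6}$, and a deterministic constant that must be matched against the shift $\demi\log(\tfrac{8\pi}{N})$ built into the definition of $X_N$. Once the matching is done, mod-Gaussian convergence with $Q_N(t)=(\log\tfrac{N}{2})t^2$ and limiting function $\Phi_{SO}$ is immediate, and the displayed quantitative estimate is precisely what Proposition~\ref{pr-app}, (3) delivers.

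The one genuinely delicate point, and the step where I expect the main work to lie, is this last bit of bookkeeping: checking that the residual powers of $2$, of $\pi$ and of $N$ recombine exactly into $(\tfrac{8\pi}{N})^{-it/2}$, so that the limiting function comes out as the clean expression $G(\demi)/G(\demi+it)$ with no spurious factors. This amounts to tracking the contribution of the $2^{2Nit}$ prefactor and of the half-integer arguments through the Barnes asymptotics, using the functional equation $G(z+1)=\Gamma(z)G(z)$ together with the values $\Gamma(\demi)=\sqrt{\pi}$ and $G(\demi)$; everything else is a transcription of the symplectic computation with $\tfrac{3}{2}$ replaced by $\demi$ throughout.
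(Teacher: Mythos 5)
Your outline is exactly the paper's proof: quote the Keating--Snaith formula \cite[(56)]{ks-l-functions} for $SO(2N,\Rr)$, rewrite via~(\ref{eq-ratio-gamma}) as $\frac{G(1/2)}{G(1/2+it)}$ times an $N$-dependent Barnes ratio, and apply the appendix estimate so that the $(\sqrt{8\pi/N})^{it}$ factor cancels the shift $\demi\log(8\pi/N)$ in $X_N$, leaving $(\tfrac{N}{2})^{-t^2/2}\Phi_{SO}(t)(1+O((1+|t|^3)/N))$. The only slip is the citation: the relevant item of Proposition~\ref{pr-app} is \emph{(4)} (the one with arguments $\tfrac12+it+N$, $2N$, $it+N$, \dots), not \emph{(3)}, which is tailored to the symplectic ratio with arguments $\tfrac32+it+g$, $2+2g$, etc., and would not apply to the orthogonal Barnes quotient.
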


\begin{proof}
Using~\cite[(56)]{ks-l-functions} and~(\ref{eq-ratio-gamma}), we get
\begin{align*}
\expect(e^{it \log \det(1-T_N)})&=
2^{2Nit}\prod_{j=1}^N{
\frac{\Gamma(N+j-1)\Gamma(it+j-\demi)}
{\Gamma(j-\demi)\Gamma(it+N+j-1)}
},
\\
&=
\frac{G(\tfrac{1}{2})}{G(\tfrac{1}{2}+it)}
\times 
2^{2Nit}
\frac{G(\tfrac{1}{2}+it+N)G(2N)G(it+N)}
{G(\tfrac{1}{2}+N)G(N)G(it+2N)},
\end{align*}
and by applying Proposition~\ref{pr-app}, (4) in the Appendix, we get
the desired formula
$$
\expect(e^{itX_N})=\Bigl(\frac{N}{2}\Bigr)^{-t^2/2}\Phi_{SO}(t)
\Bigl(1+O\Bigl(\frac{1+|t|^3}{N}\Bigr)\Bigr).
$$
\end{proof}

\begin{remark}
  If we compare with the symplectic case, we observe the (already
  well-established) phenomenon that the value $\det(1-A)$, for $A\in
  SO(2N,\Rr)$ tend to be small, whereas they tend to be large for
  symplectic matrices in $USp(2g,\Cc)$.
\end{remark}

Our arithmetic corollary is based on families of quadratic twists of
elliptic curves over function fields, and we select a specific example
for concreteness (see~\cite[\S 4]{quad-twists}); the basic theory,
which we illustrate here, is again due to Katz~\cite{katz-twists}.
\par
For any odd prime power $q\geq 3$, any integer $N\geq 1$, we consider
the elliptic curves over the functional field $\Fp_q(T)$ given by the
Weierstrass equations
$$
\mathcal{E}_z\,:\, 
Y^2=(T^N-NT-1-z)X(X+1)(X+T),
$$
where $z\in\Fp_q$ is a parameter such that $z$ is not a critical value
of $T^N-NT-1$.
\par
Katz proved that the associated $L$-function (which is now defined by
the ``standard'' Euler product over primes in $\Fp_q[T]$, with
suitable ramified factors) is of the form
$$
L(\mathcal{E}_z,s+1)=\det(1-\theta_{z}(q)q^{-s})
$$
where $\theta_{z}(q)$ is a unique conjugacy class in $O(2N,\Rr)$.

 
\begin{theorem}\label{th-orthogonal}
  Fix a non-empty open interval $]\alpha,\beta[\subset
  ]0,+\infty[$. For all $N$ large enough, we have
$$
\liminf_{\stacksum{p\ra +\infty}{(p-1,N-1)=1}}{\frac{1}{p}
  \Bigl|\Bigl\{ z\in \Fp_p\,\mid\, \Bigl(\frac{N}{8\pi}\Bigr)^{1/2}
  L(\mathcal{E}_z,1/2)\in ]\alpha,\beta[ \Bigr\}\Bigr|} \gg
\frac{1}{\sqrt{\log N}}.
$$
\end{theorem}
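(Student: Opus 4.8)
The plan is to run the argument of Theorem~\ref{th-hyperelliptic} with the hyperelliptic family and its symplectic monodromy replaced by the one-parameter family $(\mathcal{E}_z)$ and its orthogonal monodromy; unlike in the unitary case of Theorem~\ref{th-katz}, no averaging over an auxiliary parameter is needed here, since $SO(2N,\Rr)$, being semisimple, can already occur as the monodromy group of a one-parameter family. Fix a non-empty open interval $]\alpha,\beta[\,\subset\,]0,+\infty[$, set $U=\,]\log\alpha,\log\beta[$ (again a non-empty open interval), and write $L_N=\log\det(1-\theta_z(q))-\demi\log(\tfrac{8\pi}{N})$, viewed as a random variable on $\Fp_p$ with normalized counting measure, so that the event $(\tfrac{N}{8\pi})^{1/2}L(\mathcal{E}_z,1/2)\in\,]\alpha,\beta[$ is the event $\{L_N\in U\}$. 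The first step is the random-matrix input: by Proposition~\ref{pr-orthogonal}, the matrix random variable $X_N=\log\det(1-T_N)-\demi\log(\tfrac{8\pi}{N})$ on $SO(2N,\Rr)$ converges in the mod-Gaussian sense with $Q_N(t)=(\log\demi N)t^2$ (so $m=1$, $\mu=1$, $\sigma_N=\log\demi N$) and limiting function $\Phi_{SO}$, while the uniformity~(\ref{eq-strong-convergence}) holds with $a$ arbitrarily large, $C=1/2$ and $A$ any real number $>2$. Exactly as in the corollary following Proposition~\ref{pr-symplectic}, Theorem~\ref{th-2} --- in the lower-bound form of Remark~\ref{rm-lower-bound}, together with~(\ref{eq-lower-easy}) and the fact that $\tilde{Q}_N(x_0)=x_0^2/\delta_N\ra0$ for each fixed $x_0$ --- then gives
$$
\proba(X_N\in U)\gg\frac{1}{\sqrt{\log N}}
$$
for all $N$ larger than an explicit bound depending only on $\alpha$ and $\beta$; nothing new is required here.

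The second, and main, step is the equidistribution of the conjugacy classes $\theta_z(q)$. Let $\mathcal{U}$ be the complement, in the affine $z$-line, of the (at most $N-1$) critical values of $T^N-NT-1$. Appealing to Katz's computation of the monodromy of such families of quadratic twists~\cite{katz-twists} (compare the set-up of~\cite{quad-twists}), I would show that, for primes $p$ with $p\nmid N(N-1)$ and $(p-1,N-1)=1$, the geometric and arithmetic monodromy groups of the associated lisse sheaf on $\mathcal{U}$ coincide and are conjugate to $SO(2N,\Rr)$ --- the congruence condition being what is needed to force the special orthogonal group rather than $O(2N,\Rr)$ or a proper subgroup, as it controls the rational critical points of $T^N-NT-1$ (there is then a single one, at $T=1$) and hence the component group of the monodromy. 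The sheaf being moreover everywhere tame with conductor bounded solely in terms of $N$, the effective one-parameter Deligne equidistribution theorem~\cite[9.2.5, 9.2.6]{katzsarnak}, used as in the proof of Theorem~\ref{th-katz}, yields
$$
\frac{1}{|\mathcal{U}(\Fp_p)|}\sum_{z\in\mathcal{U}(\Fp_p)}\Tr\Lambda(\theta_z(q))\ll\frac{(\dim\Lambda)\,N}{\sqrt p}
$$
for every non-trivial irreducible representation $\Lambda$ of $SO(2N,\Rr)$, with an absolute implied constant, so that the classes $\theta_z(q)$, $z\in\mathcal{U}(\Fp_p)$, become equidistributed in $SO(2N,\Rr)^{\sharp}$ as $p\ra+\infty$ along such primes.

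It remains to transfer. Since $(\tfrac{N}{8\pi})^{1/2}L(\mathcal{E}_z,1/2)=\exp(L_N)$, with $L(\mathcal{E}_z,1/2)=\det(1-\theta_z(q))$ the central value, the set
$$
W_N=\Bigl\{g\in SO(2N,\Rr)\ \Bigm|\ (\tfrac{N}{8\pi})^{1/2}\det(1-g)\in\,]\alpha,\beta[\Bigr\}
$$
is open, has $\mu_N(W_N)=\proba(X_N\in U)$ for $\mu_N$ the Haar measure on $SO(2N,\Rr)$, satisfies $\det(1-g)>0$ throughout (so the central value is automatically non-zero on it), and $\theta_z(q)\in W_N$ is precisely the event $(\tfrac{N}{8\pi})^{1/2}L(\mathcal{E}_z,1/2)\in\,]\alpha,\beta[$. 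Using $|\mathcal{U}(\Fp_p)|=p+O(1)$, the portmanteau inequality for open sets~\cite[Th. 2.1, (iv)]{billingsley}, and the two previous steps, one obtains
$$
\liminf_{\stacksum{p\ra+\infty}{(p-1,N-1)=1}}\frac1p\,\bigl|\{z\in\Fp_p\mid(\tfrac{N}{8\pi})^{1/2}L(\mathcal{E}_z,1/2)\in\,]\alpha,\beta[\}\bigr|\ \geq\ \proba(X_N\in U)\ \gg\ \frac{1}{\sqrt{\log N}}
$$
for all $N$ large enough, which is the assertion.

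The hard part is the second step: one must extract from Katz's work the precise monodromy of the family $(\mathcal{E}_z)$ and verify that, under $p\nmid N(N-1)$ and $(p-1,N-1)=1$, the arithmetic and geometric monodromy groups agree and equal $SO(2N,\Rr)$ --- the group $O(2N,\Rr)$ would break the match with Proposition~\ref{pr-orthogonal}, and a proper subgroup would change the limiting law --- together with the tameness and the conductor bound that make the error term in the equidistribution uniform in $p$, this uniformity being exactly what renders the $\liminf$ non-trivial. Everything else --- the mod-Gaussian estimate, the appeal to Theorem~\ref{th-2}, and the dictionary between the random-matrix statement and the $L$-value statement --- is identical to the symplectic case already treated above.
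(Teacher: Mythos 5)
Your overall plan is the right one, but there is a concrete error in the key arithmetic step. You assert that, under the hypotheses $p\nmid N(N-1)$ and $(p-1,N-1)=1$, the geometric (and arithmetic) monodromy group of the family $(\mathcal{E}_z)$ is $SO(2N,\Rr)$, and that the congruence condition ``forces the special orthogonal group rather than $O(2N,\Rr)$''. This is false for families of quadratic twists: the sign of the functional equation of $L(\mathcal{E}_z,s)$ varies with $z$, so the Frobenius conjugacy classes $\theta_z(p)$ land in both components of $O(2N,\Rr)$, and the monodromy group is the full orthogonal group $O(2N,\Rr)$, not $SO(2N,\Rr)$. (Equivalently: if monodromy were $SO(2N,\Rr)$, then $\det(\theta_z(p))=1$ for all $z$, i.e., all the $L$-functions in the family would have even sign; but that fails.) The congruence condition in \cite{quad-twists} is there to guarantee that the monodromy is as big as possible, namely $O(2N,\Rr)$, not to shrink it.

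The missing step, which the paper supplies, is precisely the restriction to the subset $V_p\subset\Fp_p$ of parameters $z$ for which $\det(\theta_z(p))=1$. By equidistribution in $O(2N,\Rr)^{\sharp}$ (which is what Katz's theorem, as packaged in \cite[Cor.~4.4]{quad-twists}, actually gives, after passing from the vertical to the horizontal direction as in Theorem~\ref{th-katz}), one has $|V_p|\sim p/2$, and the restricted classes $\{\theta_z(p)\mid z\in V_p\}$ become equidistributed in $SO(2N,\Rr)^{\sharp}$. This is exactly what is needed to match Proposition~\ref{pr-orthogonal}, which concerns Haar measure on $SO(2N,\Rr)$. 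Note also that for $A\in O(2N,\Rr)$ with $\det A=-1$, one has $\det(1-A)=0$ (since $+1$ is then necessarily an eigenvalue), so the parameters in $\Fp_p\setminus V_p$ contribute nothing to the event $(\tfrac{N}{8\pi})^{1/2}L(\mathcal{E}_z,1/2)\in\,]\alpha,\beta[$; thus the factor $|V_p|/p\sim 1/2$ is harmlessly absorbed in the implied constant, and the rest of your argument --- the mod-Gaussian input via Proposition~\ref{pr-orthogonal} and Theorem~\ref{th-2}, and the portmanteau inequality --- goes through unchanged once this restriction is made. Without it, the claimed equidistribution in $SO(2N,\Rr)^{\sharp}$ of the \emph{full} family is simply wrong, and the appeal to the Deligne equidistribution theorem in your second step would be vacuous because the representation $\Lambda$ of $SO(2N,\Rr)$ would not be a representation of the actual (larger) monodromy group.
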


\begin{proof}
  As recalled in~\cite[Cor. 4.4 and before]{quad-twists}, for all
  $N\geq 146$ and primes $p$ with $p\nmid N(N-1)(N+1)$ and
  $(p-1,N-1)=1$, the conjugacy classes $\theta_{z}(p)$, for
  $z\in\Fp_p$ not a critical value, become equidistributed in
  $O(2N,\Rr)^{\sharp}$ for the image of Haar measure (precisely, this
  is stated for the ``vertical direction'' where $p$ is fixed and
  finite fields of characteristic $p$ and increasing degree are used;
  however, because the parameter variety is a curve with $N+1$ points
  at infinity and the relevant sheaf is tame, we can recover the
  horizontal statement as in the proof of Theorem~\ref{th-katz}). In
  particular, there is a subset $V_p\subset \Fp_p$ with $|V_p|\sim
  p/2$ where $\det(\theta_{z}(p))=1$ and those restricted conjugacy
  classes become equidistributed in $SO(2N,\Rr)^{\sharp}$. Hence, for
  $N$ large enough, we get
\begin{multline*}
\liminf_{\stacksum{p\ra +\infty}{(p-1,N-1)=1}}
\frac{1}{|V_p|} \Bigl|\Bigl\{
  z\in V_p\,\mid\, \Bigl(\frac{N}{8\pi}\Bigr)^{1/2}
  L(\mathcal{E}_z,1/2)\in ]\alpha,\beta[ \Bigr\}\Bigr|
\\\geq \mu_{SO(2N,\Rr)}(\{A\,\mid\, 
\log \det(1-A)\in]\log \alpha,\log\beta[\})
\gg \frac{1}{\sqrt{\log N}},
\end{multline*}
as desired.
\end{proof}

\begin{remark}
  Obviously, this result (or its generalizations to other families of
  quadratic twists over function fields) has interesting consequences
  concerning the problem of the distribution of the order of
  Tate-Shafarevich groups of the associated elliptic curves, through
  the Birch and Swinnerton-Dyer conjecture (which is known to be valid
  in its strong form for many elliptic curves over function fields
  over a finite field with analytic rank $0$ or $1$). We hope to come
  back to this question, and its conjectural analogue over number
  fields, in another work.
\end{remark}


\section*{Appendix: estimates for the Barnes function}

We present in this appendix some uniform analytic estimate for the
Barnes function, which are needed to verify the strong convergence
assumption~(\ref{eq-strong-convergence}) for sequences of random
matrices in compact classical groups. Note that we did not try to
optimize the results.

\begin{proposition}\label{pr-app}
  \emph{(1)} For all $z\in\Cc$ and $n\geq 1$ with $|z|\leq n^{1/6}$,
  we have
\begin{equation}\label{eq-power-ratio}
\frac{G(1+z+n)}{G(1+n)}=(2\pi)^{z/2}e^{-(n+1)z}(1+n)^{z^2/2+nz}
\Bigl(1+O\Bigl(\frac{z^2+z^3}{n}\Bigr)\Bigr).
\end{equation}
\par
\emph{(2)} For all $N\geq 1$ and all $t=(t_1,t_2)\in\Rr^2$ with $\|t\|\leq
N^{1/6}$ we have
$$
\frac{G(1+it_1+N)G(1+N)}{G(1+\tfrac{it_1-t_2}{2}+N)
G(1+\tfrac{it_1+t_2}{2}+N)}
=N^{-(t_1^2+t_2)/4}\Bigl(1+
O\Bigl(\frac{1+\|t\|^3}{N}\Bigr)\Bigr).
$$
\par
\emph{(3)} For all $g\geq 1$ and all $t\in\Rr$ with
$|t|\leq g^{1/6}$ we have
$$
2^{2git}
\frac{G(\tfrac{3}{2}+it+g)G(2+2g)G(2+it+g)}
{G(\tfrac{3}{2}+g)G(2+g)G(2+it+2g)}=
\Bigl(\frac{g}{2}\Bigr)^{-t^2/2}
\Bigl(\sqrt{\frac{\pi g}{2}}\Bigr)^{it}\Bigl(1+
O\Bigl(\frac{1+|t|^3}{g}\Bigr)\Bigr).
$$
\par
\emph{(4)} For all $N\geq 1$ and all $t\in\Rr$ with
$|t|\leq N^{1/6}$ we have
$$
2^{2Nit} \frac{G(\tfrac{1}{2}+it+N)G(2N)G(it+N)}
{G(\tfrac{1}{2}+N)G(g)G(it+2N)}= \Bigl(\frac{N}{2}\Bigr)^{-t^2/2}
\Bigl(\sqrt{\frac{8\pi}{N}}\Bigr)^{it}\Bigl(1+
O\Bigl(\frac{1+|t|^3}{N}\Bigr)\Bigr).
$$
\par
In all estimates, the implied constants are absolute.
\end{proposition}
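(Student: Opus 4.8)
The plan is to reduce everything to the single estimate~(1) and then obtain (2)--(4) by elementary manipulation. The engine for~(1) is the classical full asymptotic expansion of the Barnes function: for $|w|\to\infty$ with $|\arg w|\leq\pi-\delta$,
$$\log G(1+w)=\frac{w^2}{2}\log w-\frac{3w^2}{4}+\frac{w}{2}\log(2\pi)-\frac{1}{12}\log w+\zeta'(-1)+O\Bigl(\frac{1}{|w|^2}\Bigr),$$
which is standard (e.g.\ via the relation to the Hurwitz zeta function, or the uniform versions in the literature on the Barnes function); we only need the main part, call it $P(w)$, together with a $O(|w|^{-2})$ remainder. If $|z|\leq n^{1/6}$ then $w=z+n$ and $w=n$ both have modulus $\asymp n$ and argument $\to 0$, so subtraction gives $\log\{G(1+z+n)/G(1+n)\}=P(z+n)-P(n)+O(n^{-2})$. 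I would then Taylor-expand $P(z+n)-P(n)$ in powers of $z/n$: the $\tfrac{w^2}{2}\log w$ term is handled by writing $\log(z+n)=\log n+\log(1+z/n)$, the $-\tfrac34 w^2$ term is exact, $\tfrac{w}{2}\log(2\pi)$ yields $\tfrac{z}{2}\log(2\pi)$, $-\tfrac1{12}\log w$ contributes a harmless $O(|z|/n)$, and $\zeta'(-1)$ cancels; keeping all terms down to order $z^3/n$ leaves a remainder $O(z^4/n^2)=o(1/n)$ in the stated range. Collecting terms recovers $\log$ of $(2\pi)^{z/2}e^{-(n+1)z}(1+n)^{z^2/2+nz}$ up to an error $O((|z|+|z|^2+|z|^3)/n+n^{-2})$, and exponentiating gives~(1). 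A key remark is that the argument never used that $n$ is an integer: the same computation proves~(1) with $n$ replaced by any real number $\geq 1$, and it is this slightly more general statement that (2)--(4) will actually use, since half-integer base points occur in them (the linear-in-$z$ part of the error is also irrelevant there, the stated errors being $\geq\text{const}/N$ anyway).

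Granting the general form of~(1), parts (2), (3), (4) are bookkeeping. Each Barnes ratio factors as a product of at most three expressions of the shape $G(c+\ell+aX)/G(c+aX)$, with $X\in\{N,g\}$, $a$ a small positive integer, $c$ a fixed half-integer and $\ell$ a real linear combination of $it_1,t_2$ (or of $it$); each such expression is an instance of the general~(1) with base point $n=c-1+aX$ and shift $z=\ell$, possibly inverted. One then multiplies: the factors $(2\pi)^{z/2}$ combine via $\sum\pm z_i$ (this is $0$ in (2), and $it/2$ in (3)--(4)), the factors $e^{-(n_i+1)z_i}$ combine the same way, the explicit $2^{2git}$ (resp.\ $2^{2Nit}$) in the statement cancels the leading $\exp(it\cdot aX\log(aX))$-type growth coming from the $n_iz_i$-exponents, and the surviving pieces of $(n_i+1)^{z_i^2/2+n_iz_i}$ reduce, after writing $\log(aX+c)=\log(aX)+O(1/X)$ and $n_i+1=n_i(1+O(1/n_i))$, to the clean closed forms $N^{-(t_1^2+t_2^2)/4}$ for (2), $(g/2)^{-t^2/2}(\pi g/2)^{it/2}$ for (3), and $(N/2)^{-t^2/2}(8\pi/N)^{it/2}$ for (4). (I note in passing that the exponent in the displayed statement of (2) should read $N^{-(t_1^2+t_2^2)/4}$, and that ``$G(g)$'' in the denominator of (4) should be ``$G(N)$''.)

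Finally, the error terms: each invocation of~(1) carries a factor $1+O((|z_i|+|z_i|^3)/n_i)$, and since every shift satisfies $|z_i|\ll\|t\|$ and every base point satisfies $n_i\asymp X$, the product is $1+O((\|t\|+\|t\|^3)/X)=1+O((1+\|t\|^3)/X)$; the auxiliary $1+O(1/X)$ corrections, once multiplied into the quadratic exponents $z_i^2$, contribute only $O(\|t\|^2/X)$, again absorbed. The one point requiring genuine care, and really the only place the argument is not mechanical, is that the unbounded pieces $\exp(it\cdot aX\log(aX))$ must cancel \emph{exactly} among the several factors and the explicit powers of $2$, not merely asymptotically; verifying this amounts to identities such as
$$(g+\tfrac32)\log(g+\tfrac32)+(g+1)\log(g+2)-(2g+1)\log(2g+2)=\tfrac12\log g-2g\log 2+O(1)$$
with an \emph{exact} $O(1)$-constant of the right value, obtained by direct expansion. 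Apart from this, and from pinning down a version of the Barnes asymptotic with uniform $O(|w|^{-2})$ remainder in the required sector — standard, and which the authors explicitly decline to optimize — I expect no real obstacle.
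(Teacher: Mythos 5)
Your proposal is correct but takes a genuinely different route for the core estimate~(1). The paper works from the \emph{exact} Ehrhardt--Silbermann identity
\[
\frac{G(1+z+n)}{G(1+n)}=(2\pi)^{z/2}e^{-(n+1)z}(1+n)^{z^2/2+nz}S_n(z),
\qquad
S_n(z)=e^{-z(z-1)/2}\prod_{k>n}\Bigl(1+\frac{z}{k}\Bigr)^{k-n}\Bigl(1+\frac{1}{k}\Bigr)^{z^2/2+nz}e^{-z},
\]
and then bounds $\log S_n(z)$ by expanding each logarithm in the infinite product; the linear-in-$z$ contributions cancel exactly against the $-z(z-1)/2$ prefactor, which is what produces the clean error $O((z^2+z^3)/n)$ with no $|z|/n$ or $n^{-2}$ term. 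Your substitute engine — subtracting two instances of the classical asymptotic $\log G(1+w)=\tfrac{w^2}{2}\log w-\tfrac{3w^2}{4}+\tfrac{w}{2}\log 2\pi-\tfrac1{12}\log w+\zeta'(-1)+O(w^{-2})$ — does the same bookkeeping but the $-\tfrac1{12}\log w$ term contributes a genuine, non-cancelling $O(|z|/n)$, and the tail of the expansion contributes $O(n^{-2})$; so you obtain a strictly weaker version of~(1), with error $O\bigl((|z|+|z|^3)/n+n^{-2}\bigr)$, rather than the statement as printed. You are right, and explicit, that this weaker form is still enough for (2)--(4), whose errors $O((1+\|t\|^3)/N)$ absorb both offenders, so nothing downstream breaks. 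The trade-off is that the paper's route yields the sharper~(1) and avoids having to justify a uniform sectorial $O(w^{-2})$ remainder, while yours avoids appealing to the specific [e--s] rearrangement and has the pleasant side benefit of working for non-integer base points, which lets you choose $n$ to be the half-integer shift rather than absorbing $\tfrac12$ into $z$ as the paper does (with parameter lists like $(g,\tfrac12+it),(g,\tfrac12)$, etc.). Your reduction of (2)--(4) to a product of shifted-ratio instances of~(1) is the same telescoping idea the paper uses, and your emphasis on the need for the $e^{it\cdot aX\log(aX)}$-type terms to cancel \emph{exactly} — not just asymptotically — is precisely the only delicate point in that calculation. Finally, the two typographical corrections you flag ($N^{-(t_1^2+t_2^2)/4}$ in~(2); $G(N)$ for $G(g)$ in~(4)) are both right: the paper's own proof of~(2) ends with $(1+N)^{-\|t\|^2/4}$, and $g$ is not even in scope in~(4).
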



\begin{proof}
  One can use the asymptotic expansions in~\cite{fl}, but we follow
  instead the nice arrangement of the Barnes function
  in~\cite[Cor. 3.2]{e-s}, which leads to a quicker and cleaner proof.
\par
(1) First, the ratio of Barnes function is well-defined since $n\geq
1$. We now use the formula
\begin{equation}\label{eq-formula}
\frac{G(1+z+n)}{G(1+n)}=(2\pi)^{z/2}e^{-(n+1)z}(1+n)^{z^2/2+nz}S_n(z),
\end{equation}
where
$$
S_n(z)=e^{-z(z-1)/2} \prod_{k\geq n+1}{
  \Bigl(1+\frac{z}{k}\Bigr)^{k-n}\Bigl(1+\frac{1}{k}\Bigr)^{z^2/2+nz}
  e^{-z} },
$$
which is valid for $z\in\Cc$, $n\geq 1$ (see~\cite[p. 241]{e-s}).
\par
If we expand the logarithm, defined using the Taylor expansion of
$\log(1+w)$ at the origin, we have
$$
\log(1+w)=w-\frac{w^2}{2}+O(w^{-3})
$$
for $|w|\leq 1/2$, with an absolute implied constant. Hence we obtain
\begin{align*}
\log S_n(z)&=
-\frac{z(z-1)}{2}+
\sum_{k>n}{\Bigl(k\log \Bigl(1+\frac{z}{k}\Bigr)+\frac{z^2}{2}
\log\Bigl(1+\frac{1}{k}\Bigr)-z\Bigr)}\\
&\quad\quad +\sum_{k>n}{n\Bigl(z\log\Bigl(1+\frac{1}{k}\Bigr)
-\log\Bigl(1+\frac{z}{k}\Bigr)\Bigr)}
\\
&=-\frac{z(z-1)}{2}-\frac{z^2}{2}\sum_{k>n}{\frac{1}{k^2}}
+n\frac{z(z-1)}{2}\sum_{k>n}{\frac{1}{k^2}}
+O\Bigl(\sum_{k>n}{\Bigl(\frac{z^2}{k^3}+\frac{z^3}{k^2}\Bigr)}\Bigr)
\end{align*}
with an absolute implied constant, for $n\geq 1$ and $|z|\leq n/2$,
hence for $|z|\leq n^{1/6}$ we get
$$
\log S_n(z)=O\Bigl(\frac{z^2+z^3}{n}\Bigr),
$$
since
$$
\sum_{k>n}{\frac{1}{k^2}}=\frac{1}{n}+O\Bigl(\frac{1}{n^2}\Bigr),
\quad
\sum_{k>n}{\frac{1}{k^3}}=\frac{1}{2n^2}+O\Bigl(\frac{1}{n^3}\Bigr),
$$
for $n\geq 1$, with absolute implied constants. Hence, we have
$$
S_n(z)=1+O\Bigl(\frac{z^2+z^3}{n}\Bigr)
$$
for $|z|\leq n^{1/6}$, for some absolute implied constant, and we get
the stated formula
$$
\frac{G(1+z+n)}{G(1+n)}=(2\pi)^{z/2}e^{-(n+1)z}(1+n)^{z^2/2+nz}
\Bigl(1+O\Bigl(\frac{z^2+z^3}{n}\Bigr)\Bigr).
$$
\par
(2) Note first that the conditions $N\geq 1$ and $\|t\|\leq N^{1/6}$
ensure that the values of the Barnes function in the denominator are
non-zero. Next, let $u=(it_1-t_2)/2$, $v=(it_1+t_2)/2$; we can express
the ratio of Barnes function as
$$
\frac{G(1+u+v+N)G(1+N)}{G(1+u+N)
G(1+v+N)}
=\frac{G(1+u+v+N)}{G(1+N)}
\frac{G(1+N)}{G(1+u+N)}
\frac{G(1+N)}{G(1+v+N)},
$$
and $\|t\|\leq N^{1/6}$ gives $|u|$, $|v|\leq N^{1/6}$, allowing us to
apply~(\ref{eq-power-ratio}) three times. The exponential terms cancel
out, leading to
$$
\frac{G(1+u+v+N)G(1+N)}{G(1+u+N)
G(1+v+N)}
=(1+N)^{-\|t\|^2/4}\Bigl(1+O\Bigl(\frac{|t|^2+|t|^3}{N}\Bigr)
\Bigr)
$$
which gives the first part of the proposition.
\par
(3) We use a similar computation, applying~(\ref{eq-power-ratio}) six
times with the parameters $(n,z)$ 
$$
(2g,1),\  (g,1+it),\  (g,\demi+it),\ 
(g,1),\  (2g,1+it),\  (g,\demi),
$$
leading, after an easy calculation, to a main term
$$
(2\pi)^{it/2}e^{-it}(1+g)^{-t^2+3it/2+2igt}
(1+2g)^{t^2/2-it-2igt}
$$
for the ratio of Barnes functions, and some further computation leads
to the stated result (each parameter $z$ has $|z|\leq |t|+1$, so the
error term is also as given).
\par
(4) We argue exactly as in the previous case, with parameters $(n,z)$
given now by
\begin{gather*}
(2N-1,0),\  (N-1,0),\  (N-1,it+\demi),\ 
 (N-1,it),\  (2N-1,it),\  (N-1,\demi),
\end{gather*}
and we get a main term
$$
(2\pi)^{it/2}N^{-t^2-3it/2+2iNt}(2N)^{t^2/2+it-2itN}=
2^{t^2/2+it-2itN}(2\pi)^{it/2}N^{-t^2/2-it/2},
$$
which leads to the conclusion.
\end{proof}

\end{document}